\newtheorem{theorem}{Theorem}[subsection]
\newtheorem{lemma}[theorem]{Lemma}
\newtheorem{corollary}[theorem]{Corollary}
\newtheorem{proposition}[theorem]{Proposition}
\newtheorem{notation}[theorem]{Notation}
\newtheorem{remark}[theorem]{Remark}
\newtheorem{remarks}[theorem]{Remarks}
\newtheorem{question}[theorem]{Question}}
\numberwithin{equation}{section}
\numberwithin{theorem}{section}
\newcommand{\spec}{\mathrm{sp}}
\newcommand{\cM}{{\cal M}}
\newcommand{\ep}{{\varepsilon}}
\newcommand{\cO}{{\cal O}}
\newcommand{\cZ}{{\cal Z}}
\newcommand{\C}{{\mathbb C}}
\newcommand{\Z}{{\mathbb Z}}
\newcommand{\N}{{\mathbb N}}
\newcommand{\D}{{\mathcal D}}
\newcommand{\R}{{\mathbb R}}
\newcommand{\Cs}{{$C^*$-al\-ge\-bra}}
\newcommand{\gange}{\! \cdot \!}
\newcommand{\setmin}{\!  \setminus}
\newcommand{\sh}{{$^*$-ho\-mo\-mor\-phism}}
\newcommand\eqdef{{\;\overset{\mbox{\scriptsize def}}{=}\;}}
\newcommand{\nprecsim}{{\operatorname{\hskip3pt \precsim\hskip-9pt |\hskip6pt}}}
\newenvironment{proof}[1][Proof:]
{\begin{trivlist}\item[]\textbf{#1} }
{\hbox{}\nobreak\hfill\quad\hbox{$\square$}\end{trivlist}}
\begin{document}

\title{The Jiang--Su algebra revisited}

\author{Mikael R\o rdam\footnote{Supported by the Danish Natural
    Science Research Council and the Fields Institute} \, and Wilhelm
  Winter\footnote{Supported by the Deutsche Forschungsgemeinschaft through the SFB 478}}


\maketitle
\begin{abstract} \noindent
We give a number of new characterizations of the Jiang--Su algebra 
$\mathcal{Z}$, both intrinsic and extrinsic, in terms of  $C^{*}$-algebraic, 
dynamical, topological and $K$-theoretic conditions. Along the way we
study divisibility properties of \Cs s, we 
give a precise characterization of those unital \Cs s of stable rank
one that admit a unital embedding of the dimension-drop \Cs{}
$Z_{n,n+1}$, and we prove a cancellation theorem for the Cuntz
semigroup of \Cs s of stable rank one. 
\end{abstract}

\section{Introduction}

In Elliott's program to classify nuclear $C^{*}$-algebras by $K$-theory data 
(see \cite{Ror:encyc} for an introduction), the systematic use of strongly 
self-absorbing $C^{*}$-algebras play a central role. The term ``strongly
self-absorbing \Cs s'' was formally coined in the paper
\cite{TomsWin:Z} to denote the class of \Cs s $\D \neq \mathbb{C}$ for which 
 there is an
isomorphism from $\D$ to $\D \otimes \D$ which is approximately
unitarily equivalent to the embedding $d \mapsto d \otimes
1$. Strongly self-absorbing \Cs s are automatically simple, nuclear
and have at most one tracial state. The Cuntz algebras $\cO_2$ and
$\cO_\infty$ and the Jiang--Su algebra $\cZ$ are strongly
self-absorbing.

Most classification results obtained so far can be interpreted as 
\emph{classification up to 
$\mathcal{D}$-stability}, where $\mathcal{D}$ is one of the (few) known 
strongly self-absorbing examples (cf.\ \cite{TomsWin:ASH}). The
classification of Kirchberg algebras can thus be viewed as
classification up to $\cO_\infty$-stability. There is at present much
interest in classification up to $\cZ$-stability, which appears to be
the largest possible class of ``$\D$-stable'' \Cs s. One may view
$\cZ$ as being the stably finite analogue of $\cO_\infty$.

The original construction of the Jiang--Su algebra in \cite{JiaSu:Z}
is as an inductive limit of a sequence of \Cs s with
specified connecting mappings. Whereas everything in this construction
in principle is concrete, the presentation is not canonical, and it
depends on infinitely many choices. Since the Jiang--Su algebra has
become to play such a central role in the classification program it is
desirable to have a more concrete and ``finite'' presentation of this
algebra, or to be able to characterize it in a more streamlined way.  
We refer to the recent paper by Dadarlat and Toms,
\cite{DadToms:Z}, for a very nice such characterization. In this paper
we present other characterizations and presentations of 
the Jiang--Su algebra. 

The many alternative 
descriptions available for the Cuntz algebra $\cO_\infty$ 
provide a guideline of what kind of characterizations 
one might expect for $\mathcal{Z}$.   They involve ($C^{*}$-)algebraic, dynamical and 
$K$-theoretic conditions; in the present paper we shall employ similar conditions to 
characterize the Jiang--Su algebra in various manners. We also give a topological 
characterization of $\mathcal{Z}$ which currently has no known analogue for 
$\mathcal{O}_{\infty}$. 

Besides its original presentation as a universal $C^{*}$-algebra with generators and 
relations,  $\mathcal{O}_{\infty}$ may be written as a crossed product of a 
canonical subalgebra by an endomorphism (see \cite{Cuntz:On}). Both these 
descriptions are concrete, and entirely intrinsic. Kirchberg has obtained a completely 
different characterization of $\mathcal{O}_{\infty}$, as the uniquely determined 
purely infinite, separable, unital, nuclear $C^{*}$-algebra which is $KK$-equivalent 
to the complex numbers (cf.\ \cite{Kir:fields}; see also
\cite{Kir:CentralSequences}).  
Note that this description is not intrinsic, since it compares
$\mathcal{O}_{\infty}$  
with the complex numbers (at least on the level of $KK$-theory). Using
the well known  
facts  that strongly self-absorbing $C^{*}$-algebras are nuclear and
either stably finite  
with a unique tracial state or 
purely infinite, it is then immediate  that $\mathcal{O}_{\infty}$ is
the unique strongly  
self-absorbing $C^{*}$-algebra that has no tracial state and is
$KK$-equivalent to $\mathbb{C}$.  
Moreover, one might rephrase the condition of being \emph{purely}
infinite in terms of the Cuntz  
semigroup: a simple $C^{*}$-algebra is purely infinite if and only if
it is infinite and has almost  
unperforated Cuntz semigroup, in which case its Cuntz semigroup
coincides with the semigroup $\{0,\infty\}$. 
Regarding the Cuntz semigroup as a
$K$-theoretic invariant  
in the broadest sense, one arrives at an abstract (but extrinsic)
characterization of  
$\mathcal{O}_{\infty}$ among strongly self-absorbing $C^{*}$-algebras
in terms of  
$K$-theory data.

Let us compare the characterizations of $\mathcal{O}_{\infty}$ and of  $\mathcal{Z}$ 
in more detail. Cuntz's original description of $\mathcal{O}_{\infty}$
uses (infinitely many)  
generators and relations. While Jiang and Su's  construction is not
quite of this type, the  
building blocks of their inductive limit are given by (finitely many)
generators and  
relations---and for many purposes this has proven to be just as useful
as  if the whole algebra  
was presented as a universal $C^{*}$-algebra. 

Cuntz's description of $\mathcal{O}_{\infty}$ as a crossed product uses the dynamics 
of a certain canonical subalgebra. It is not so easy to write the Jiang--Su algebra as a 
crossed product by a single endomorphism, since such algebras tend to have nontrivial 
$K_{1}$-groups, but we can nonetheless use dynamical properties of  certain canonical 
subalgebras to write $\mathcal{Z}$ as a stationary inductive limit of such a subalgebra; 
the connecting map is not easy to describe explicitly (its existence follows from a result of
I.~Hirshberg and the authors), but its pertinent property can 
be stated in a very elegant manner. More precisely, we show that the Jiang--Su algebra 
is a stationary inductive limit of a generalized prime dimension drop $C^{*}$-algebra 
and a trace-collapsing endomorphism; any such limit is isomorphic to $\mathcal{Z}$.    
Although the connecting maps of the inductive system are not given explicitly, this is 
still an entirely intrinsic description of the Jiang--Su algebra. We wish to point out that 
this picture has already proven highly useful in \cite{Winter:localizingEC}.

The largest part of the paper will be devoted to finite versions (for $\mathcal{Z}$) of 
Kirchberg's characterization of $\mathcal{O}_{\infty}$. The general pattern of such 
characterizations goes as follows: one states various conditions ($C^{*}$-algebraic, 
$K$-theoretic and/or topological), and shows that, if met by a
strongly self-absorbing $C^{*}$-algebra 
$\mathcal{D}$, then $\mathcal{D}$ is isomorphic to $\mathcal{Z}$. Then one 
observes that $\mathcal{Z}$ itself satisfies the conditions in question. The latter will 
follow mostly from known results by Jiang and Su, the first named author, and the second 
named author and E.~Kirchberg. To establish an isomorphism between $\mathcal{D}$ 
and $\mathcal{Z}$, it will suffice to construct unital embeddings in both directions, 
as we work within the class of strongly self-absorbing 
$C^{*}$-algebras.

Our first characterization singles out $\mathcal{Z}$ as the uniquely determined strongly 
self-absorbing $C^{*}$-algebra of stable rank one, for which the unit can be approximately 
divided in the Cuntz semigroup, and which is absorbed by any UHF algebra. The latter 
condition will guarantee that the algebra in question is absorbed by the Jiang--Su algebra, 
using a joint result by I.~Hirshberg and the authors. That the  algebra absorbs $\mathcal{Z}$ 
follows from stable rank one together with a cancellation theorem for
the Cuntz semigroup established in Section~\ref{sec:W-cancellation},  
and from the divisibility condition. The key technical tool here will be 
Proposition~\ref{prop:embedding1}, which provides criteria for embeddability of certain 
dimension drop intervals into a unital $C^{*}$-algebra. Essentially, this is done by analyzing 
a set of generators and relations quite different from those used to describe dimension drop 
intervals in \cite{JiaSu:Z}.

Along similar lines, we then obtain another characterization of $\mathcal{Z}$, as the 
uniquely determined strongly self-absorbing finite $C^{*}$-algebra which has almost 
unperforated Cuntz semigroup and which is absorbed by any UHF algbra. We point out 
that the latter condition in particular entails that the algebra in question is $KK$-equivalent 
to the complex numbers, whence this characterization indeed may be viewed as a finite 
analogue of Kirchberg's characterization of $\mathcal{O}_{\infty}$. Again, the proof 
uses ideas from Proposition~\ref{prop:embedding1} in a crucial way, along with a further 
careful analysis of divisibility properties of strongly self-absorbing $C^{*}$-algebras.

Our last characterization of the Jiang--Su algebra involves the decomposition rank, a notion 
of covering dimension for nuclear $C^{*}$-algebras introduced by E.~Kirchberg and the 
second named author in \cite{KirWinter:dr}. Our result says that $\mathcal{Z}$ is the 
uniquely determined strongly self-absorbing $C^{*}$-algebra with finite decomposition rank 
which is $KK$-equivalent to the complex numbers. The proof uses the fact that finite 
decomposition rank entails sufficient regularity on the level of the Cuntz semigroup; 
together with Proposition~\ref{prop:embedding1} this shows that finite decomposition 
rank and strongly self-absorbing imply $\mathcal{Z}$-stability. That $\mathcal{Z}$ is 
the only such algebra then follows from a recent classification theorem of the second named 
author (\cite{Winter:localizingEC}). We note that decomposition rank is of a very 
topological flavour, and that there is currently no analogous characterization for 
$\mathcal{O}_{\infty}$.

The paper is organized as follows. In Section~\ref{sec:background}, we recall some 
background results about strongly self-absorbing $C^{*}$-algebras, the Jiang--Su algebra, 
and order zero maps. In Section~\ref{sec:gen-dim-drop}, we characterize the Jiang--Su 
algebra as a stationary inductive limit of generalized dimension drop algebras. 
Section~\ref{sec:W-cancellation} provides a cancellation theorem for the Cuntz semigroup 
of $C^{*}$-algebras with stable rank one.  In Section~\ref{sec:abstract-char1} we derive 
an abstract characterization of the Jiang--Su algebra among strongly self-absorbing 
$C^{*}$-algebras of stable rank one; in the subsequent section we obtain a variation of 
this result, asking the Cuntz semigroup to be almost unperforated. Finally, in 
Section~\ref{sec:ssa-dr}, we characterize the Jiang--Su algebra among strongly 
self-absorbing $C^{*}$-algebras of finite decomposition rank.

The authors thank The Fields Institute and George Elliott for
hospitality during our stay in the fall of 2007, and we thank
George Elliott and Eberhard Kirchberg for 
a number of inspiring conversations on the question of how to characterize the Jiang--Su 
algebra abstractly.

\section{Some background results} 
\label{sec:background}

\noindent In this section we recall some well-known results about
strongly self-absorbing \Cs s in general and about the Jiang--Su
algebra, $\cZ$, in particular. (The reader is referred to the
introduction and to \cite{TomsWin:Z} for a definition and properties
of strongly self-absorbing \Cs s.) We also recall some facts about completely 
positive contractive (c.p.c.) order zero maps.

We quote below a result by Andrew Toms and the
second named author about the hierarchy of strongly self-absorbing \Cs s:

\begin{proposition}[Toms--Winter, \cite{TomsWin:Z}] \label{prop:inclusion}
Let $\mathcal{D}$ and $\mathcal{E}$ be strongly self-absorbing \Cs s. Then:
\begin{enumerate}
\item $\mathcal{D}$ embeds
unitally into $\mathcal{E}$ if and only if $\mathcal{D} \otimes \mathcal{E}$ 
is isomorphic to $\mathcal{E}$.
\item $\mathcal{D}$ and $\mathcal{E}$ are isomorphic if $\mathcal{D}$ embeds 
unitally into $\mathcal{E}$ and
  $\mathcal{E}$ embeds unitally into $\mathcal{D}$.
\end{enumerate}
\end{proposition}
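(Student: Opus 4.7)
The plan is to dispatch the easy direction of~(i) at once, then reduce the hard direction of~(i) to~(ii), so that the real content is the proof of~(ii). The implication $\mathcal{D}\otimes\mathcal{E}\cong\mathcal{E}\Rightarrow\mathcal{D}\hookrightarrow\mathcal{E}$ unitally is trivial: compose $d\mapsto d\otimes 1_{\mathcal{E}}$ with the given isomorphism.

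For~(ii), given unital embeddings $\varphi\colon\mathcal{D}\hookrightarrow\mathcal{E}$ and $\psi\colon\mathcal{E}\hookrightarrow\mathcal{D}$, I would apply Elliott's approximate intertwining theorem. This reduces the job to verifying that $\psi\circ\varphi$ and $\varphi\circ\psi$ are each approximately unitarily equivalent to the identity. Both compositions are unital $^{*}$-endomorphisms of strongly self-absorbing $C^{*}$-algebras, so the whole argument pivots on the key fact: \emph{every unital $^{*}$-endomorphism of a strongly self-absorbing $C^{*}$-algebra is approximately unitarily equivalent to the identity}. To establish this, fix an isomorphism $\phi\colon\mathcal{D}\to\mathcal{D}\otimes\mathcal{D}$ with $\phi\approx_{u}\iota_{1}$, where $\iota_{1}(d)=d\otimes 1$. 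A standard consequence of the ssa definition is the \emph{approximately inner half-flip}: the embeddings $\iota_{1}$ and $\iota_{2}(d)=1\otimes d$ are themselves approximately unitarily equivalent (one builds the needed unitaries out of $\phi$, $\phi^{-1}$, and the AU equivalence $\phi\approx_{u}\iota_{1}$). Given a unital endomorphism $\gamma\colon\mathcal{D}\to\mathcal{D}$, I would pass to $\mathcal{D}\otimes\mathcal{D}$ via $\phi$ and use the half-flip to absorb $\gamma$ into the ``trivial'' tensor factor: the map $d\mapsto\gamma(d)\otimes 1$ is AU equivalent to $d\mapsto 1\otimes\gamma(d)$, which as a second-factor embedding is AU equivalent to $d\mapsto 1\otimes d$ and hence, by another half-flip, to $\iota_{1}$. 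Transferring back through $\phi^{-1}$ yields $\gamma\approx_{u}\mathrm{id}_{\mathcal{D}}$.

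For (i)~$\Rightarrow$, I would observe that $\mathcal{D}\otimes\mathcal{E}$ is itself strongly self-absorbing (immediate from the definition upon rearranging tensor factors, using the defining isomorphisms and half-flips for $\mathcal{D}$ and $\mathcal{E}$ separately) and construct unital embeddings in both directions between $\mathcal{E}$ and $\mathcal{D}\otimes\mathcal{E}$: the canonical second-factor embedding $e\mapsto 1\otimes e$ in one direction, and the composite $\chi\circ(\varphi\otimes\mathrm{id}_{\mathcal{E}})\colon\mathcal{D}\otimes\mathcal{E}\to\mathcal{E}$ in the other, where $\chi\colon\mathcal{E}\otimes\mathcal{E}\to\mathcal{E}$ is an isomorphism provided by strong self-absorption of~$\mathcal{E}$. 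Part~(ii), applied to the pair $(\mathcal{E},\mathcal{D}\otimes\mathcal{E})$, then delivers the desired isomorphism. The main obstacle is the key fact about endomorphisms; in particular, turning the informal ``swap tensor factors and absorb $\gamma$'' argument into a genuine AU equivalence requires chaining several half-flip unitaries uniformly on finite sets, and one must be careful that the approximations in $\mathcal{D}\otimes\mathcal{D}$ pull back to approximations in $\mathcal{D}$ via $\phi^{-1}$. Once that is handled, everything else is a formal application of Elliott intertwining.
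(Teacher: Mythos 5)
The paper offers no proof of this statement at all---it is quoted verbatim from Toms--Winter \cite{TomsWin:Z}---so your proposal can only be measured against the standard argument from that reference, whose overall architecture you reproduce correctly: two-sided Elliott intertwining for (ii), reduction of the hard direction of (i) to (ii) via strong self-absorption of $\mathcal{D}\otimes\mathcal{E}$, and everything pivoting on the fact that unital endomorphisms of strongly self-absorbing algebras are approximately unitarily equivalent to the identity.

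However, your proof of that pivotal fact contains a circular step. In the chain
\[
\gamma(d)\otimes 1 \;\approx_u\; 1\otimes\gamma(d)\;\approx_u\; 1\otimes d\;\approx_u\; d\otimes 1,
\]
the first and last links are legitimate uses of the approximately inner half-flip (the first by precomposing the half-flip equivalence with $\gamma$), but the middle link, justified only by the phrase ``as a second-factor embedding,'' is exactly the assertion $\iota_2\circ\gamma\approx_u\iota_2$, i.e.\ $\gamma\approx_u\mathrm{id}_{\mathcal{D}}$ transported to the second tensor factor---the very thing you are trying to prove. Two unital embeddings of $\mathcal{D}$ into $1\otimes\mathcal{D}$ are not automatically approximately unitarily equivalent. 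The missing ingredient is the standard Effros--Rosenberg-type lemma: if $\mathcal{D}$ has approximately inner half-flip and $\sigma,\rho\colon\mathcal{D}\to B$ are unital $^*$-homomorphisms with commuting ranges, then $\sigma\approx_u\rho$ (the pair factors through $\mathcal{D}\otimes\mathcal{D}$ via $a\otimes b\mapsto\sigma(a)\rho(b)$, and one pushes the half-flip unitaries forward through this map). Applying this to $\sigma(d)=1\otimes\gamma(d)$ and $\rho(d)=d\otimes 1$, whose ranges commute, gives $1\otimes\gamma(d)\approx_u d\otimes 1$ directly; combined with the first link this yields $\iota_1\circ\gamma\approx_u\iota_1$, and conjugating back through the isomorphism $\phi\approx_u\iota_1$ gives $\gamma\approx_u\mathrm{id}_{\mathcal{D}}$. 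With that lemma inserted, the remainder of your argument---the trivial direction of (i), the strong self-absorption of $\mathcal{D}\otimes\mathcal{E}$, and the application of Elliott intertwining to these separable algebras---goes through.
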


\noindent For each supernatural number $p$ let $M_p$ denote the UHF
algebra of type $p$. We say that $p$ is of \emph{infinite type} if
$p^\infty = p$, in which case $M_p$ is strongly self-absorbing. (If
$p$ is a natural number, then $M_p$ will denote the \Cs{} of $p \times p$
matrices over the complex numbers.) If $p$ and $q$ are natural or
supernatural numbers, then we set
$$Z_{p,q} = \{f \in C([0,1],M_p \otimes M_q) \mid f(0) \in M_p \otimes
\C, \; f(1) \in \C \otimes M_q\}.$$
If $p$ and $q$ are natural numbers, then $Z_{p,q}$ is a so-called
\emph{dimension-drop \Cs}. If $p$ and $q$ are relatively prime, then 
$Z_{p,q}$ is said to be \emph{prime}.

It is worthwhile noting that $Z_{p,q}$ has no non-trivial projections
(other than $0$ and $1$) if and only if $p$ and $q$ are relatively
prime (natural or supernatural numbers), and that its $K$-theory in
that case is given by
$$K_0(Z_{p,q}) \cong \Z, \qquad K_1(Z_{p,q}) = 0.$$

Prime dimension-drop \Cs{}s play a crucial role in the definition of
the Jiang--Su algebra: 

\newpage

\begin{theorem}[Jiang--Su, \cite{JiaSu:Z}] \label{thm:JS}
The inductive limit of the sequence
$$A_1 \to A_2 \to A_3 \to \cdots,$$
where each $A_j$ is a prime dimension-drop \Cs{} and where the
connecting mappings are unital, is isomorphic to the Jiang--Su algebra
$\cZ$ if and only if it is simple and has a unique tracial state.
\end{theorem}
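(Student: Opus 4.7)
The plan is to treat the two directions separately. The ``only if'' direction is immediate: Jiang and Su's original construction in \cite{JiaSu:Z} realizes $\cZ$ precisely as such an inductive limit, and simplicity and uniqueness of the tracial state of $\cZ$ are established in the same paper.

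For the substantive ``if'' direction, let $A$ be the inductive limit in question. First I would pin down the Elliott invariant of $A$. Each building block $A_j = Z_{p_j,q_j}$ fits into a short exact sequence
\[
0 \to C_0((0,1)) \otimes M_{p_j q_j} \to Z_{p_j,q_j} \to M_{p_j} \oplus M_{q_j} \to 0,
\]
and the associated six-term exact sequence, together with coprimality of $p_j$ and $q_j$, yields $K_0(A_j) \cong \Z$ with $[1_{A_j}]$ as generator and $K_1(A_j) = 0$. Since the connecting maps are unital they send generators to generators on $K_0$, so continuity of $K$-theory gives $K_0(A) \cong \Z$ generated by $[1_A]$, and $K_1(A) = 0$. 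Simplicity of $A$ together with the existence of a (necessarily faithful) tracial state then force the positive cone of $K_0(A)$ to be $\N \cup \{0\}$, and pairing with the unique trace identifies the induced map $K_0(A) \to \R$ with the standard inclusion $\Z \hookrightarrow \R$. These data match the Elliott invariant of $\cZ$ exactly.

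The last step is to invoke Jiang and Su's classification theorem for simple unital inductive limits of prime dimension drop algebras with unital connecting maps, also established in \cite{JiaSu:Z}, and thereby conclude that $A \cong \cZ$.

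The hard part is precisely this classification step, whose proof in \cite{JiaSu:Z} relies on a careful two-sided approximate intertwining combined with a detailed existence/uniqueness analysis of unital $^*$-homomorphisms between prime dimension drop algebras. Since the theorem being recorded here is essentially a reformulation of the main result of \cite{JiaSu:Z}, the plan is to quote that argument directly rather than reproduce it.
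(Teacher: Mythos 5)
Your proposal is correct and is consistent with how the paper treats this statement: the paper offers no proof at all, simply recording the theorem as a quoted result of Jiang and Su, and your outline (compute the Elliott invariant of the limit via the six-term sequence for the building blocks, match it with that of $\cZ$, and invoke Jiang--Su's classification theorem for simple unital inductive limits of prime dimension drop algebras) is exactly the standard derivation from \cite{JiaSu:Z}. The substantive content is, as you say, the classification theorem itself, which both you and the paper defer to the original reference.
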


\noindent Let $p$ be a natural number. Recall from \cite{Winter:cpr1} that a 
c.p.c.\ map $\varphi \colon M_{p} \to A$ is said to have \emph{order 
  zero} if it 
preserves orthogonality. We collect below some well known facts about  
order zero maps (see \cite[Proposition~3.2(a)]{Winter:cpr1}  and 
\cite[1.2]{Winter:fintopdim} for Proposition~\ref{order-zero-facts}, and  
\cite[1.2.3]{Winter:cpr2} for Proposition~\ref{order-zero-facts-2}). We let
$e_{ij}$, or sometimes $e_{ij}^{(p)}$, denote the canonical $(i,j)$th 
matrix unit in $M_p$.

\begin{proposition}[Winter, \cite{Winter:cpr1, Winter:fintopdim}]
\label{order-zero-facts}
Let $A$ be a $C^{*}$-algebra, let $p \in \mathbb{N}$, and let
$\varphi \colon M_{p} \to A$ be a c.p.c.\ order zero map.
\begin{enumerate}
\item There is a unique $^{*}$-homomorphism $\tilde{\varphi} \colon 
\mathcal{C}_{0}((0,1]) \otimes M_{p} \to A$ such that 
$\varphi(x) = \tilde{\varphi}( \iota\otimes x)$
for all $x \in M_{p}$, where $\iota(t) = t$.
\item There is a unique $^{*}$-homomorphism $\bar{\varphi} \colon
M_{p} \to A^{**}$ 
given by sending the matrix unit $e_{ij}$ in $M_p$ to the partial
isometry in $A^{**}$ in the polar decomposition of $\varphi(e_{ij})$. We have 
\[
\varphi(x)= \bar{\varphi}(x) \varphi(1_{p}) = 
\varphi(1_{p}) \bar{\varphi}(x) 
\]
for all $x \in M_p$; and $\bar{\varphi}(1_{p})$ is the support
projection of $\varphi(1_{p})$. 
\item If, for some  $h  \in A^{**}$ with $\|h\|\le 1$, the element
  $h^{*}h$ commutes  
with $\bar{\varphi}(M_{p})$ and satisfies 
$h^{*}h \bar{\varphi}(M_{p}) \subseteq A$, then the map
$\varphi_{h} \colon M_{p} \to A$
given by $ \varphi_{h}(x) = h \bar{\varphi}(x) h^{*}$, for $x \in M_p$,
is a well defined c.p.c.\ order zero map.
\end{enumerate}
\end{proposition}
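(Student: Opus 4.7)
The plan is to handle the three parts in sequence, with each resting on the previous.

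For part (i), I would invoke the universal property of the cone $\mathcal{C}_0((0,1]) \otimes M_p$: it is generated by the elements $\iota \otimes e_{ij}$, subject to the relations $(\iota \otimes e_{ij})^{*} = \iota \otimes e_{ji}$ and $(\iota \otimes e_{ij})(\iota \otimes e_{kl}) = \delta_{jk}\,(\iota^{2} \otimes e_{il})$. These are precisely the relations satisfied by the images of matrix units under any c.p.c.\ order zero map from $M_p$; the extra factor of $\iota$ captures the failure of $\varphi$ to be a $^{*}$-homomorphism. Setting $\tilde{\varphi}(\iota \otimes e_{ij}) := \varphi(e_{ij})$ and extending multiplicatively and by norm-closure defines the desired $^{*}$-homomorphism, while uniqueness is immediate since the $\iota \otimes e_{ij}$ generate $\mathcal{C}_0((0,1]) \otimes M_p$.

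For part (ii), extend $\tilde{\varphi}$ normally to $\tilde{\varphi}^{**}: (\mathcal{C}_0((0,1]) \otimes M_p)^{**} \to A^{**}$ and let $p \in \mathcal{C}_0((0,1])^{**}$ denote the support projection of $\iota$ (i.e., $\chi_{(0,1]}$ via Borel functional calculus). Define $\bar{\varphi}(x) := \tilde{\varphi}^{**}(p \otimes x)$. Since $p^{2} = p$, the assignment $x \mapsto p \otimes x$ is a $^{*}$-homomorphism, so $\bar{\varphi}$ is as well. The identities $\varphi(x) = \bar{\varphi}(x)\varphi(1_p) = \varphi(1_p)\bar{\varphi}(x)$ follow from $(p \otimes x)(\iota \otimes 1_p) = (\iota \otimes 1_p)(p \otimes x) = \iota \otimes x$, and $\bar{\varphi}(1_p)$ is the support projection of $\varphi(1_p)$ because normal $^{*}$-homomorphisms preserve support projections and $p \otimes 1_p$ is the support of $\iota \otimes 1_p$. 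The identification of $\bar{\varphi}(e_{ij})$ with the partial isometry in the polar decomposition of $\varphi(e_{ij})$ then follows by comparing $\varphi(e_{ij}) = \bar{\varphi}(e_{ij})\varphi(1_p)$ with that polar decomposition in $A^{**}$.

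For part (iii), I would first check that $\varphi_h : M_p \to A^{**}$ is c.p.c.\ order zero. Complete positivity and contractivity are immediate because $\varphi_h = \mathrm{Ad}(h) \circ \bar{\varphi}$ is a $^{*}$-homomorphism followed by a compression with $\|h\| \le 1$. The order zero property uses the commutativity of $h^{*}h$ with $\bar{\varphi}(M_p)$: for $xy = 0$ in $M_p$,
\[
\varphi_h(x)\varphi_h(y) = h\bar{\varphi}(x)\,h^{*}h\,\bar{\varphi}(y)h^{*} = h\bar{\varphi}(xy)\,h^{*}h\,h^{*} = 0.
\]
The principal obstacle is to show $\varphi_h(M_p) \subseteq A$ rather than merely $A^{**}$. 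Since $\bar{\varphi}(e_{ii})$ is a projection commuting with $h^{*}h$, the identity $(h^{*}h\,\bar{\varphi}(e_{ii}))^{n} = (h^{*}h)^{n}\bar{\varphi}(e_{ii})$ combined with Stone--Weierstrass gives $f(h^{*}h)\bar{\varphi}(M_p) \subseteq A$ for every $f \in \mathcal{C}_0((0,\|h\|^{2}])$; in particular $|h|\bar{\varphi}(M_p) \subseteq A$. Writing $h = v|h|$ for the polar decomposition in $A^{**}$ and using that $|h|$ commutes with $\bar{\varphi}(M_p)$, a direct calculation yields $\varphi_h(x) = v\bar{\varphi}(x)(h^{*}h)v^{*}$. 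The approximation $v = \lim_{\varepsilon \to 0^{+}} h(h^{*}h + \varepsilon)^{-1/2}$ then expresses $\varphi_h(x)$ as a limit of elements whose components, after applying the identity $h f(h^{*}h) = f(hh^{*})h$ to shift functional calculi across $h$, reduce to already-identified elements of $A$; this final bridging step between the strong-operator limit and norm convergence is the technical heart of the argument.
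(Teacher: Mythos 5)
The paper offers no proof of this proposition to compare against: it is quoted verbatim from \cite{Winter:cpr1} and \cite{Winter:fintopdim}, so your attempt has to stand on its own. Part (ii) of your plan is fine once (i) is available (passing to $\tilde{\varphi}^{**}$ and the support projection of $\iota$ is a clean way to produce $\bar{\varphi}$), but your argument for (i) is essentially circular, and (i) is the substance of the whole statement. The ``relations'' you list are not a presentation of $\mathcal{C}_0((0,1])\otimes M_p$ in terms of the generators $\iota\otimes e_{ij}$ (the element $\iota^2\otimes e_{il}$ is not among them, nor in their span), and, more importantly, the claim that the $\varphi(e_{ij})$ satisfy them is exactly what has to be proved. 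Order zero only gives $\varphi(a)\varphi(b)=0$ for \emph{orthogonal} positive $a,b$; it says nothing directly about products like $\varphi(e_{12})\varphi(e_{21})$, and establishing that $\varphi(e_{ij})\varphi(e_{kl})=\delta_{jk}\cdot(\text{something depending only on }i,l)$ --- equivalently, that $\varphi(1_p)$ commutes with the image and that the polar parts of the $\varphi(e_{ij})$ form a system of matrix units --- is the real content of the structure theorem. The known proofs obtain this first (e.g.\ by applying orthogonality to the rank-one projections $\tfrac12(e_{ii}+e_{jj}\pm(e_{ij}+e_{ji}))$, or via Wolff's theorem on disjointness-preserving maps), construct the supporting homomorphism $\bar{\varphi}$, and only then define $\tilde{\varphi}(f\otimes x)=f(\varphi(1_p))\bar{\varphi}(x)$; your plan runs the implications in the opposite direction and leaves the hard step unaddressed. (Contrast with the paper's Proposition~\ref{order-zero-facts-2}, which is the genuinely easy converse direction.)

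In (iii) you correctly isolate the real difficulty --- that $h\bar{\varphi}(x)h^{*}$ lands in $A$ rather than $A^{**}$ --- but the step you defer cannot be completed as outlined. Your reduction gives $\varphi_h(x)=v\bigl(h^{*}h\,\bar{\varphi}(x)\bigr)v^{*}$ with $h^{*}h\,\bar{\varphi}(x)\in A$, but conjugation by the polar partial isometry $v\in A^{**}$ need not preserve $A$: the approximants $h(h^{*}h+\varepsilon)^{-1/2}$ converge to $v$ only strongly, and the functions one would need to move across $h$ blow up at $0$, so there is no norm convergence to exploit. In fact the hypotheses as literally stated do not control $hh^{*}$ at all: already for $p=1$ and $\varphi(1)$ strictly positive (so $\bar{\varphi}(1)=1_{A^{**}}$) one can take $A=C([0,1],M_2)$, $a=\operatorname{diag}(1,\iota)\in A$ and $h=ua$ with $u\in A^{**}$ a unitary acting as a flip only on the atomic fibre over $0$; then $h^{*}h=a^{2}\in A$ while $hh^{*}\notin A$. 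So no argument of the shape you propose can succeed for arbitrary $h$; what makes the conclusion work is an additional assumption such as $h\geq 0$ commuting with $\bar{\varphi}(M_p)$, in which case $\varphi_h(x)=h^{*}h\,\bar{\varphi}(x)\in A$ is immediate from the hypothesis --- and that is precisely the only case in which this paper invokes (iii), namely $h=(1_A-\alpha(1_{n+1}))^{1/2}$ in the proof of Proposition~\ref{prop:embedding1}.
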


\noindent The map $\bar{\varphi}$ in (ii) above will be called the \emph{supporting 
$^{*}$-homomorphism} of $\varphi$.

\begin{proposition}[Winter, \cite{Winter:cpr2}] \label{order-zero-facts-2}
Suppose $x_1, x_{2}, \ldots,x_{p} \in A$ satisfy the relations 
\begin{equation} \tag{$\mathcal{R}_{p}$}
\|x_{i}\| \le 1, \quad x_1 \ge 0, \quad x_ix_i^* = x_1^*x_1, \quad
x_j^*x_j \perp x_i^*x_i,
\end{equation}
for all $i,j=1, \dots, n$ with $i \ne j$. Then the linear map $\psi
\colon M_p \to A$ given by $\psi(e_{ij}) = x_i^*x_j$ is a c.p.c.\
order zero map. 
\end{proposition}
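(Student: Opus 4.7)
The plan is to realize $\psi$ as a conjugation $a \mapsto X^{*}aX$ by a suitable element $X \in M_{p}(A)$ and to exploit the hypotheses to show that $XX^{*}$ is a ``scalar'' multiple of the identity of $M_{p}(A)$. Once that is established, complete positivity, contractivity, and the order zero property all fall out by formal manipulation.

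Concretely, I would set $X := \sum_{i=1}^{p} x_{i} \otimes e_{i1} \in M_{p}(A)$ and verify by direct computation that $X^{*}(1 \otimes e_{ij})X = (x_{i}^{*}x_{j}) \otimes e_{11}$, so that under the identification $e_{11}M_{p}(A)e_{11} \cong A$ the map $\psi$ coincides with the restriction of $Y \mapsto X^{*}YX$ to $M_{p} \subseteq M_{p}(A)$. Complete positivity is then automatic, being the composition of the CP inclusion $M_{p} \hookrightarrow M_{p}(A)$ with the CP map $Y \mapsto X^{*}YX$.

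Next I would compute $XX^{*} \in M_{p}(A)$, whose $(i,j)$-entry is $x_{i}x_{j}^{*}$. The diagonal entries equal $x_{i}x_{i}^{*} = x_{1}^{2}$ by hypothesis. For $i \neq j$, the orthogonality $x_{i}^{*}x_{i} \perp x_{j}^{*}x_{j}$ yields $|x_{i}|^{2}|x_{j}|^{2} = 0$, and hence
\[
\|x_{j}x_{i}^{*}x_{i}\|^{2} \;=\; \bigl\| |x_{i}|^{2}|x_{j}|^{2}|x_{i}|^{2}\bigr\| \;=\; 0,
\]
forcing $x_{j}x_{i}^{*}x_{i} = 0$, after which $\|x_{j}x_{i}^{*}\|^{2} = \|x_{j}x_{i}^{*}x_{i}x_{j}^{*}\| = 0$ gives $x_{j}x_{i}^{*} = 0$. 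Therefore $XX^{*} = x_{1}^{2} \otimes 1_{p}$.

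Finally, contractivity reads $\|\psi(a)\| \leq \|X\|^{2}\|a\| = \|XX^{*}\|\,\|a\| = \|x_{1}^{2}\|\,\|a\| \leq \|a\|$, and the order zero property follows since, for $a,b \in M_{p}^{+}$ with $ab = 0$,
\[
\psi(a)\psi(b) \;=\; X^{*}a(XX^{*})bX \;=\; X^{*}(x_{1}^{2}\otimes ab)X \;=\; 0,
\]
using that $x_{1}^{2}\otimes 1_{p}$ commutes with every scalar matrix in $M_{p} \subseteq M_{p}(A)$. The only real obstacle I foresee is extracting $x_{j}x_{i}^{*} = 0$ from the abstract relation $x_{i}^{*}x_{i} \perp x_{j}^{*}x_{j}$, since the $x_{i}$ need not be normal; once that is in hand, the rest is formal.
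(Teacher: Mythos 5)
Your argument is correct, and it is worth noting that the paper itself offers no proof of this proposition: it is quoted verbatim (up to a relabelling of the generators) from \cite{Winter:cpr2}, so there is nothing internal to compare against. Your proposal supplies a complete, self-contained verification by the standard device of writing $\psi$ as the $(1,1)$-corner compression of $Y \mapsto X^{*}YX$ with $X = \sum_i x_i \otimes e_{i1}$; the computation $X^{*}(1\otimes e_{ij})X = x_i^{*}x_j \otimes e_{11}$ is right, complete positivity follows as you say, and the one genuinely delicate point --- deducing $x_j x_i^{*} = 0$ for $i \neq j$ from $x_i^{*}x_i \perp x_j^{*}x_j$ without any normality assumption --- is handled correctly by the two-step $C^{*}$-norm argument ($\|x_j x_i^{*}x_i\|^{2} = \|x_i^{*}x_i\, x_j^{*}x_j\, x_i^{*}x_i\| = 0$, then $\|x_j x_i^{*}\|^{2} = \|x_j x_i^{*}x_i x_j^{*}\| = 0$). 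Together with $x_i x_i^{*} = x_1^{*}x_1 = x_1^{2}$ this gives $XX^{*} = x_1^{2}\otimes 1_p$, from which contractivity and the order zero property for orthogonal positive elements follow exactly as you write. (Two cosmetic remarks: if $A$ is non-unital one should read $1\otimes e_{ij}$ in the unitization, which changes nothing since the image lands in $A$; and for contractivity one can equally use $\|X\|^{2} = \|X^{*}X\| = \|\sum_k x_k^{*}x_k\| \le 1$, the sum being of pairwise orthogonal positive contractions.)
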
 

\noindent
Note that the original version of the above result was phrased in terms of elements 
of the form $e_{i1}$, $i=2,\ldots,p$. However, it is straightforward to check that the two versions 
are in fact equivalent.

The next proposition contains a recipe for finding a unital
\sh{} from a dimension drop \Cs{} $Z_{p,q}$ into a unital \Cs{} $A$. 

\begin{proposition}
\label{universal-dimension-drop}
Let $A$ be a unital  $C^{*}$-algebra. For relatively prime natural numbers 
$p$ and $q$,  suppose that $\alpha \colon M_{p} \to A$ and $\beta
\colon M_{q}\to A$ are c.p.c.\ order zero maps satisfying 
\begin{equation}
\label{alpha-beta-relations1}
\alpha(1_{p}) + \beta(1_{q}) = 1_{A}, \qquad
 [\alpha(M_{p}),\beta(M_{q})]=0.
\end{equation}
Then there is a (unique) unital $^{*}$-homomorphism $\varphi \colon
Z_{p,q} \to A$, which makes the diagram 
$$\xymatrix{& Z_{p,q} \ar[dd]_-\varphi & \\ C_0([0,1),M_p) \ar[ur]
  \ar[dr]_-{\tilde{\alpha}'} && C_0((0,1],M_q) \ar[ul]
  \ar[dl]^-{\tilde{\beta}} \\ & A &}
$$
commutative, where the upwards maps are the obvious ones, where
$\tilde{\alpha}$ and $\tilde{\beta}$ are as in
Proposition~\ref{order-zero-facts}(i), and where 
$\tilde{\alpha}'$ is obtained from $\tilde{\alpha}$ by reversing the
orientation of the interval $[0,1]$. 
\end{proposition}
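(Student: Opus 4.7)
The plan is to build $\varphi$ out of the two \sh s $\tilde\alpha'$ and $\tilde\beta$ supplied by Proposition~\ref{order-zero-facts}(i). First I apply that proposition to obtain $\tilde\alpha \colon C_0((0,1])\otimes M_p\to A$ and $\tilde\beta \colon C_0((0,1])\otimes M_q\to A$, and let $\tilde\alpha'$ be the reorientation. Proposition~\ref{order-zero-facts}(ii) together with $\alpha(1_p)+\beta(1_q)=1_A$ yields the explicit formulas $\tilde\alpha'(f\otimes x)=f(\beta(1_q))\bar\alpha(x)$ and $\tilde\beta(g\otimes y)=g(\beta(1_q))\bar\beta(y)$.

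Next I would verify that the images of $\tilde\alpha'$ and $\tilde\beta$ commute in $A$. The hypothesis $[\alpha(M_p),\beta(M_q)]=0$ gives $\beta(M_q)\subseteq\alpha(M_p)'$, hence $\beta(M_q)''\subseteq\alpha(M_p)'$ by the bicommutant theorem; combined with $\bar\alpha(M_p)\subseteq\alpha(M_p)''$ and $\bar\beta(M_q)\subseteq\beta(M_q)''$ this gives $[\bar\alpha(M_p),\bar\beta(M_q)]=0$ in $A^{**}$ as well as $[\beta(1_q),\bar\alpha(M_p)]=0$. The commutation of $\beta(1_q)$ with $\bar\beta(M_q)$ is built into Proposition~\ref{order-zero-facts}(ii). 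Continuous functional calculus of $\beta(1_q)$ then delivers the desired commutation.

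For uniqueness, observe that $Z_{p,q}$ is generated as a unital $C^*$-algebra by the union of the two upward images $\iota_L(C_0([0,1),M_p))$ and $\iota_R(C_0((0,1],M_q))$: products $\iota_L(f\otimes x)\,\iota_R(g\otimes y)=fg\otimes x\otimes y$ with $fg\in C_0((0,1))$ span $C_0((0,1),M_{pq})\subseteq Z_{p,q}$, while the boundary fibres $M_p\otimes 1_q$ at $t=0$ and $1_p\otimes M_q$ at $t=1$ are reached by $\iota_L$ and $\iota_R$ respectively. Any $\varphi$ making the diagram commute is therefore forced.

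For existence --- which I expect to be the main obstacle --- I would define $\varphi$ on the $*$-subalgebra $\mathcal{A}\subseteq Z_{p,q}$ generated by the two upward images by setting $\varphi\circ\iota_L:=\tilde\alpha'$ and $\varphi\circ\iota_R:=\tilde\beta$, and extend multiplicatively using the commutation established above. The delicate point is well-definedness: an element $\iota_L(f\otimes x)$ with $f\in C_0((0,1))$ also arises from a product $\iota_L\cdot\iota_R$, and the two resulting formulas for its image must agree. Both reduce, after using commutation, to the identity $f(\beta(1_q))=f(\beta(1_q))\bar\beta(1_q)$ whenever $f(0)=0$, which holds because $\bar\beta(1_q)$ is the support projection of $\beta(1_q)$. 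The resulting $\varphi$ is a contractive \sh{} on $\mathcal{A}$ and so extends by continuity to $Z_{p,q}$; unitality follows from $1_{Z_{p,q}}=\iota_L((1-\iota)\otimes 1_p)+\iota_R(\iota\otimes 1_q)$ mapping to $\alpha(1_p)+\beta(1_q)=1_A$.
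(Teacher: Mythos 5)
Your preliminary steps are sound and are genuinely different in flavour from the paper's argument: the formulas $\tilde\alpha'(f\otimes x)=f(\beta(1_q))\bar\alpha(x)$ and $\tilde\beta(g\otimes y)=g(\beta(1_q))\bar\beta(y)$ are correct, the bicommutant argument does give $[\bar\alpha(M_p),\bar\beta(M_q)]=0$ and $[\beta(1_q),\bar\alpha(M_p)]=0$, and the uniqueness argument (the two upward images generate $Z_{p,q}$ as a unital \Cs) is complete. The problem is the existence step, which you yourself flag as the main obstacle and which is where your write-up has a genuine gap. Specifying a map on two commuting generating subalgebras and ``extending multiplicatively'' does not by itself produce a \sh{} on $C^*(B_L\cup B_R)$: what you get for free (from the universal property of the maximal tensor product) is a \sh{} on $\widetilde{B_L}\otimes_{\max}\widetilde{B_R}$, and you must show that it factors through the multiplication map onto $Z_{p,q}$, i.e.\ that it annihilates every element of the kernel of that surjection --- not just the single overlap relation you check. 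Moreover, the final appeal to continuity is not available as stated: a $^*$-homomorphism defined only on a dense $^*$-subalgebra of a \Cs{} need not be contractive (consider $p\mapsto p(2)$ on polynomials in $C([0,1])$), so ``contractive, hence extends by continuity'' needs an argument. A further small slip: the symmetric overlap for $\iota_R(g\otimes y)$ with $g\in C_0((0,1))$ reduces to $g(\beta(1_q))\bar\alpha(1_p)=g(\beta(1_q))$, which involves the support projection of $\alpha(1_p)$, not that of $\beta(1_q)$; your claim that ``both reduce'' to the one identity is not quite right.

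The gap is repairable along your lines --- for instance by writing down the global formula $\varphi(F)=\Theta(F)+(1-\bar\beta(1_q))\bar\alpha(x_0(F))+(1-\bar\alpha(1_p))\bar\beta(y_1(F))$, where $\Theta(h\otimes x\otimes y)=h(\beta(1_q))\bar\alpha(x)\bar\beta(y)$ on $C([0,1])\otimes M_p\otimes M_q$ and $F(0)=x_0(F)\otimes 1_q$, $F(1)=1_p\otimes y_1(F)$; the three summands live under the mutually orthogonal projections $\bar\alpha(1_p)\bar\beta(1_q)$, $1-\bar\beta(1_q)$ and $1-\bar\alpha(1_p)$ (note $(1-\bar\alpha(1_p))(1-\bar\beta(1_q))=0$), so multiplicativity and contractivity come for free, and one then checks agreement with $\tilde\alpha'$, $\tilde\beta$ on generators. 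But note that the paper avoids this entire issue by a different device: it invokes Jiang and Su's presentation of $Z_{p,q}$ as a \emph{universal} \Cs{} with generators and relations (\cite[Proposition~7.3]{JiaSu:Z}), exhibits explicit elements $\bar a_i=\alpha(1_p)^{1/2}\bar\alpha(e_{1i}^{(p)})$ and $\bar b_j=\beta(1_q)^{1/2}\bar\beta(e_{1j}^{(q)})$ of $A$ satisfying those relations, and obtains $\varphi$ at once from the universal property. That external input is exactly what replaces the well-definedness and continuity arguments missing from your sketch.
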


\begin{proof}
By \cite[Proposition~7.3]{JiaSu:Z}, $Z_{p,q}$ is the universal $C^{*}$-algebra 
with generators $a_1, a_{2}, \ldots,a_{p}$, $b_1, b_{2},\ldots,b_{q}$
and relations ($\mathcal{R}_{p}$) from
Proposition~\ref{order-zero-facts-2} (with the $x_i$'s replaced by the
$a_i$'s), ($\mathcal{R}_{q}$) (with the $x_i$'s replaced by the
$b_i$'s), and
$$
[a_{i},b_{j}]=0, \qquad [a_{i},b_{j}^{*}]=0, \qquad \sum_{k=1}^p
a_k^*a_k + \sum_{l=1}^q b_l^*b_l = 1,
$$ 
for $i=1, \dots, p$ and $j=1, \dots, q$. Identifying $Z_{p,q}$ with a
sub-\Cs{} of $C([0,1]) \otimes M_p \otimes M_q$ in the canonical way, 
and letting $\iota
\in C([0,1])$ denote the function $\iota(t)=t$, we can take the
generators in $Z_{p,q}$ to be
$$a_i = (1-\iota)^{1/2} \otimes e_{1i}^{(p)} \otimes 1_q, \qquad b_j =
\iota^{1/2} \otimes 1_p \otimes e_{1j}^{(q)}.$$
It is straightforward to check that the elements
\[
\bar{a}_{i} = \alpha(1_{p})^{1/2} \bar{\alpha}(e_{1i}^{(p)}) =
\tilde{\alpha}'((1-\iota)^{1/2} \otimes e_{1i}^{(p)}), \qquad
\bar{b}_{j} = \beta(1_{q})^{1/2} \bar{\beta}(e_{1j}^{(q)}) =
\tilde{\beta}(\iota^{1/2} \otimes e_{1j}^{(q)})
\]
in $A$ satisfy the relations above, where $\bar{\alpha}$ and $\bar{\beta}$ 
are the supporting $^{*}$-homomorphisms for $\alpha$ and $\beta$, respectively. 
By the universal property of $Z_{p,q}$ there is (precisely) one unital
\sh{} $\varphi \colon Z_{p,q} \to A$ such that $\varphi(a_i) =
\bar{a}_i$ and $\varphi(b_j) = \bar{b}_j$ for all $i$ and $j$; and
one checks (on elements of the form $(1-\iota)^{1/2} \otimes
e_{1i}^{(p)} \in C_0([0,1),M_p)$ and $\iota^{1/2} \otimes e_{1j}^{(q)}
\in C_0((0,1],M_q)$) that the diagram in the proposition is commutative.
\end{proof}

\section{The Jiang--Su algebra and 
the \Cs s $Z_{p,q}$}
\label{sec:gen-dim-drop}

\noindent In this section we characterize the Jiang--Su algebra
using dynamical properties of the \Cs s $Z_{p,q}$ (defined in the previous section,
and with $p$ and $q$ supernatural numbers). The first result is an
immediate consequence of one of the main result from
\cite{HirRorWin:C_0(X)}:

\begin{proposition} \label{prop:ZpqD} Let $\mathcal{D}$ be a strongly
  self-absorbing \Cs{} which tensorially is absorbed by every
  UHF-algebra $B$, i.e., $\mathcal{D} \otimes B \cong B$. Then 
  $\mathcal{D} \otimes
  Z_{p,q} \cong Z_{p,q}$ whenever $p$ and $q$ are infinite supernatural
  numbers. 
\end{proposition}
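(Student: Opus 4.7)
The plan is to exhibit $Z_{p,q}$ (for $p,q$ infinite supernatural) as a continuous $C([0,1])$-algebra whose fibres are all UHF algebras, and then invoke the permanence result of Hirshberg--R\o rdam--Winter which says that $\D$-stability passes from all fibres of a continuous $C_0(X)$-algebra to the total algebra, provided $X$ is finite-dimensional. Since $[0,1]$ has covering dimension $1$, this will apply.

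First I would observe that by the very definition
\[
Z_{p,q} = \{f \in C([0,1],M_p \otimes M_q) \mid f(0) \in M_p \otimes \C,\; f(1) \in \C \otimes M_q\},
\]
$Z_{p,q}$ carries a canonical unital embedding of $C([0,1])$ into its centre, namely $g \mapsto g \cdot 1_{M_p \otimes M_q}$. This makes $Z_{p,q}$ a $C([0,1])$-algebra with fibres
\[
(Z_{p,q})_0 \cong M_p, \qquad (Z_{p,q})_1 \cong M_q, \qquad (Z_{p,q})_t \cong M_p \otimes M_q \text{ for } t \in (0,1),
\]
and one checks straightforwardly that the resulting $C([0,1])$-algebra is continuous (the only possible non-continuity is at the endpoints, and the evaluation maps are surjective onto each fibre).

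Next, since $p$ and $q$ are infinite supernatural numbers, so is $pq$; hence $M_p$, $M_q$, and $M_p \otimes M_q$ are all UHF algebras of infinite type. By the standing hypothesis on $\D$ we then have $\D \otimes M_p \cong M_p$, $\D \otimes M_q \cong M_q$ and $\D \otimes (M_p \otimes M_q) \cong M_p \otimes M_q$; that is, every fibre of $Z_{p,q}$ is $\D$-stable.

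Finally, I would appeal to the main theorem from \cite{HirRorWin:C_0(X)}: a separable, unital, continuous $C_0(X)$-algebra over a finite-dimensional compact Hausdorff space $X$ whose fibres are all $\D$-stable (for a strongly self-absorbing $\D$) is itself $\D$-stable. Applying this with $X = [0,1]$ yields $\D \otimes Z_{p,q} \cong Z_{p,q}$, which is the claim. The only real content of the argument is this invocation; the potential obstacle would be to verify the continuity of the $C([0,1])$-structure and the identification of the fibres, but both are immediate from the explicit description of $Z_{p,q}$ as a pullback of interval algebras along point evaluations.
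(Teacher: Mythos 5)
Your argument is exactly the paper's proof: realize $Z_{p,q}$ as a continuous $C([0,1])$-algebra with UHF fibres ($M_p$, $M_p\otimes M_q$, $M_q$), note each fibre absorbs $\mathcal{D}$ by hypothesis, and invoke the Hirshberg--R{\o}rdam--Winter permanence theorem over the finite-dimensional base $[0,1]$. The only quibble is your phrase ``UHF algebras of infinite type'': being infinite supernatural does not imply $p^\infty=p$, but this is harmless since the hypothesis concerns all UHF algebras, not only those of infinite type.
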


\begin{proof} The \Cs{} $Z_{p,q}$ is in a canonical way 
a $C([0,1])$-algebra with fibres being UHF-algebras of type $p$ at
the left end-point, of type $pq$ at $(0,1)$, and of type $q$ at the
right end-point. Each fibre is accordingly a UHF-algebra and so absorbs 
$\mathcal{D}$ tensorially. As the interval $[0,1]$ has finite
dimension it follows from \cite{HirRorWin:C_0(X)} 
that $Z_{p,q}$ also absorbs $\mathcal{D}$.
\end{proof}

\noindent The Jiang--Su algebra is strongly self-absorbing
(\cite{TomsWin:Z}) and it is being absorbed by all UHF-algebras
(\cite{JiaSu:Z}), and so we get:

\begin{corollary} \label{cor:ZpqZ} Let $p$ and $q$ be infinite
  supernatural numbers. Then $Z_{p,q}$ absorbs the Jiang--Su algebra:
  $\cZ \otimes Z_{p,q} \cong Z_{p,q}$. 
\end{corollary}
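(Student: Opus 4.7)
The plan is to deduce this directly from Proposition~\ref{prop:ZpqD} by verifying its two hypotheses for $\mathcal{D} = \cZ$. Concretely, I would need to know that (a) $\cZ$ is strongly self-absorbing and (b) $\cZ$ is tensorially absorbed by every UHF algebra $B$, i.e.\ $\cZ \otimes B \cong B$.

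For (a) I would cite the main result of \cite{TomsWin:Z}, where it is shown that the Jiang--Su algebra fits the definition of a strongly self-absorbing \Cs{}; equivalently, the canonical first-factor embedding $\cZ \to \cZ \otimes \cZ$ is approximately unitarily equivalent to an isomorphism. For (b) I would invoke the original Jiang--Su paper \cite{JiaSu:Z}, where it is established that every UHF algebra absorbs $\cZ$ tensorially (this is one of the main properties singling out $\cZ$ among known strongly self-absorbing algebras of stably finite type). With both hypotheses in hand, Proposition~\ref{prop:ZpqD} applied to $\mathcal{D} = \cZ$ directly yields $\cZ \otimes Z_{p,q} \cong Z_{p,q}$ for any pair of infinite supernatural numbers $p,q$.

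Since the work is really done by Proposition~\ref{prop:ZpqD} (which in turn rests on the $C(X)$-algebra absorption theorem from \cite{HirRorWin:C_0(X)}), there is essentially no new step or obstacle in the corollary itself; the entire content is the bookkeeping observation that the two external inputs about $\cZ$ from \cite{TomsWin:Z} and \cite{JiaSu:Z} combine precisely to fit the template of the preceding proposition. Accordingly, I would present the proof as a one-line deduction, citing those two references and Proposition~\ref{prop:ZpqD}.
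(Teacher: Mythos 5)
Your proposal is correct and coincides with the paper's own argument: the corollary is stated there as an immediate consequence of Proposition~\ref{prop:ZpqD}, with the two hypotheses for $\mathcal{D}=\cZ$ supplied by exactly the same citations (\cite{TomsWin:Z} for strong self-absorption and \cite{JiaSu:Z} for absorption by every UHF algebra).
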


\noindent The proposition below is proved in \cite[Proposition
2.2]{Ror:Z-absorbing} in the case where $p=n^\infty$ and $q=
m^\infty$, and where $n$ and $m$ are natural numbers, that are relatively
prime. We shall need this result in the slightly more general case where 
$p$ and $q$ are arbitrary supernatural numbers that are
relatively prime. Assume that such $p$ and $q$ are given. Then write $M_p$
and $M_q$ as inductive limits 
$$M_{p_1} \to M_{p_2} \to M_{p_3} \to \cdots \to M_p, \qquad 
M_{q_1} \to M_{q_2} \to M_{q_3} \to \cdots \to M_q$$
(with unital connecting mappings) for suitable sequences of natural
numbers $\{p_j\}$ and $\{q_j\}$. As $p_j | p$ and $q_j | q$, 
it is  automatic that 
$p_j$ and $q_j$ are relatively prime for all $j$. Let
$\sigma_j \colon M_{p_j} \otimes M_{q_j} \to M_{p_{j+1}} \otimes
M_{q_{j+1}}$ be a unital \sh{} such that $\sigma_j(M_{p_j}
\otimes \C) \subseteq M_{p_{j+1}} \otimes \C$ and $\sigma_j(\C \otimes
  M_{q_j}) \subseteq \C \otimes M_{q_{j+1}}$. Then
$Z_{p,q}$ is the limit of the inductive system
$$\xymatrix{Z_{p_1,q_1} \ar[r]^-{\rho_1} & Z_{p_2,q_2}
  \ar[r]^-{\rho_2} & Z_{p_3,q_3} \ar[r]^-{\rho_3} &  \cdots \ar[r]
  & Z_{p,q},}$$
where $\rho_j$ is given by $\rho_j(f) = \sigma_j \circ f$. Proceeding 
as in the proof of \cite[Proposition 2.2]{Ror:Z-absorbing} one obtains
the following:

\begin{proposition} \label{prop:Zpq-embed}
Let $p$ and $q$ be supernatural numbers that are relatively
prime. Then $Z_{p,q}$ embeds unitally into $\cZ$.
\end{proposition}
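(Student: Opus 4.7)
The plan is to adapt the proof of Proposition~2.2 of \cite{Ror:Z-absorbing}, which treats the special case $p = n^{\infty}$, $q = m^{\infty}$, to the present setting of arbitrary coprime supernatural numbers. Using the inductive limit presentation $Z_{p,q} = \varinjlim_{j} Z_{p_{j},q_{j}}$ described just before the statement --- where the natural numbers $p_{j}, q_{j}$ satisfy $p_{j} \mid p$, $q_{j} \mid q$, and are therefore automatically coprime so that each $Z_{p_{j},q_{j}}$ is a prime dimension drop \Cs{} --- the construction of a unital embedding $Z_{p,q} \hookrightarrow \cZ$ reduces to producing a coherent family of unital $^{*}$-homomorphisms $\varphi_{j} \colon Z_{p_{j},q_{j}} \to \cZ$ approximately intertwining the $\rho_{j}$'s in the sense of Elliott.

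Two ingredients are needed at the finite stages. The first is the existence of a unital embedding $Z_{p_{j},q_{j}} \hookrightarrow \cZ$ for each $j$: this follows from Theorem~\ref{thm:JS}, which realizes $\cZ$ as a simple monotracial unital inductive limit of prime dimension drop \Cs s, combined with strong self-absorption of $\cZ$, which permits any prescribed prime dimension drop \Cs{} to appear as the initial building block. The second is the approximate uniqueness of such embeddings: any two unital $^{*}$-homomorphisms $Z_{p_{j},q_{j}} \to \cZ$ are approximately unitarily equivalent, a standard consequence of the Jiang--Su classification machinery (cf.\ \cite{JiaSu:Z}) using that $Z_{p_{j},q_{j}}$ is $KK$-equivalent to $\C$ and that $\cZ$ has a unique tracial state, so the $K$-theoretic and tracial invariants of a unital map are rigidly determined.

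With these in hand, I would run the intertwining inductively: given $\varphi_{j}$ and any unital embedding $\psi_{j+1} \colon Z_{p_{j+1},q_{j+1}} \to \cZ$, the composition $\psi_{j+1} \circ \rho_{j}$ is a unital embedding of $Z_{p_{j},q_{j}}$ into $\cZ$ and is thus approximately unitarily equivalent to $\varphi_{j}$; conjugating $\psi_{j+1}$ by a suitable unitary $u_{j} \in \cZ$ yields $\varphi_{j+1}$ satisfying $\|\varphi_{j+1} \circ \rho_{j}(x) - \varphi_{j}(x)\| < 2^{-j}$ on a prescribed finite exhausting subset of $Z_{p_{j},q_{j}}$. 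Summability then delivers a well-defined unital $^{*}$-homomorphism $\varphi \colon Z_{p,q} \to \cZ$ in the limit, and injectivity is automatic from injectivity of each $\varphi_{j}$ together with the exactness of the intertwining in the limit. The principal obstacle is securing the approximate-uniqueness statement in the second ingredient for all coprime pairs $(p_{j}, q_{j})$; once it is in place, the rest of the argument reproduces verbatim the proof in \cite{Ror:Z-absorbing}.
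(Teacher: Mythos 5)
Your proposal is correct and takes essentially the same route as the paper: the paper sets up exactly the inductive limit presentation $Z_{p,q}=\varinjlim_j Z_{p_j,q_j}$ with the connecting maps $\rho_j$ and then proves the proposition by ``proceeding as in the proof of \cite[Proposition 2.2]{Ror:Z-absorbing}'', which is precisely the Elliott one-sided approximate intertwining you describe, built from existence and approximate uniqueness (via Jiang--Su's existence/uniqueness theorems for morphisms of prime dimension drop algebras, using $K_*(Z_{p_j,q_j})=(\Z,0)$ and the unique trace of $\cZ$) of unital embeddings $Z_{p_j,q_j}\to\cZ$. Only a cosmetic remark: the existence of a unital embedding of a given prime dimension drop algebra into $\cZ$ comes from the flexibility of the Jiang--Su construction (or their existence theorem), not from strong self-absorption, and injectivity of the limit map is most cleanly seen from the fact that $\cZ$ is projectionless while every proper quotient of $Z_{p,q}$ contains a nontrivial projection.
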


\noindent Combining Proposition \ref{prop:Zpq-embed} and
Corollary~\ref{cor:ZpqZ} we get unital embeddings $Z_{p,q} \to \cZ
\to Z_{p,q}$ whenever $p$ and $q$ are infinite supernatural numbers
that are relatively prime. As we shall see below, this characterizes
$\cZ$ among strongly self-absorbing \Cs s. First we note a related
result.

A unital endomorphism $\varphi$ on a unital \Cs{} $A$ is said to be
\emph{trace-collapsing} if $\tau \circ \varphi = \tau' \circ \varphi$
for any pair of tracial states $\tau$ and $\tau'$ on $A$. 

\begin{theorem} \label{thm:inductivelimit}
Let $p$ and $q$ be infinite supernatural numbers that are relatively
prime.
\begin{enumerate}
\item There exists a trace-collapsing unital endomorphism on $Z_{p,q}$.
\item Let $\varphi$ be any
trace-collapsing unital endomorphism on
$Z_{p,q}$. Then the Jiang--Su algebra $\cZ$ is isomorphic to the
inductive limit of the stationary inductive sequence:
$$\xymatrix{Z_{p,q} \ar[r]^\varphi &  Z_{p,q} \ar[r]^\varphi &
  Z_{p,q} \ar[r]^\varphi &  \cdots .}$$
\end{enumerate}
\end{theorem}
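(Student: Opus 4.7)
For part (i), my plan is to build $\varphi$ as a composition through $\cZ$. Proposition \ref{prop:Zpq-embed} supplies a unital embedding $\pi\colon Z_{p,q}\to \cZ$, while Corollary \ref{cor:ZpqZ} (together with the first-factor embedding $\cZ\hookrightarrow \cZ\otimes Z_{p,q}$) gives a unital embedding $\iota\colon \cZ\to Z_{p,q}$. Setting $\varphi := \iota\circ\pi$, the endomorphism will be trace-collapsing because for any trace $\tau$ on $Z_{p,q}$ the composition $\tau\circ\iota$ is a trace on $\cZ$ and hence equals the unique tracial state $\tau_{\cZ}$, so $\tau\circ\varphi = \tau_{\cZ}\circ\pi$ is independent of $\tau$.

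For part (ii), write $B=Z_{p,q}$ and $A=\lim(B,\varphi)$. My plan is to verify the hypotheses of Theorem \ref{thm:JS} for $A$: that it is a unital inductive limit of prime dimension-drop \Cs s, is simple, and has a unique trace. The first point will follow from a standard diagonal interleaving, using that each $B$ is itself a unital inductive limit of prime dimension-drop \Cs s $Z_{p_j,q_j}$ with natural coprime $p_j,q_j$. Uniqueness of the trace is straightforward: a trace on $A$ corresponds to a compatible sequence $(\tau_n)$ with $\tau_n=\tau_{n+1}\circ\varphi$, and trace-collapsing forces this common value to equal a single fixed trace $\sigma$ on $B$ independent of $\tau_{n+1}$. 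The constant sequence $(\sigma,\sigma,\ldots)$ then provides the unique compatible system, compatibility being ensured by applying trace-collapsing to $\sigma$ itself, which yields $\sigma\circ\varphi=\sigma$.

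For simplicity, my main technical lemma will be the following dichotomy: for every $b\in B_+$, either $\varphi(b)=0$ or $\varphi(b)$ is full in $B$. To prove this I test trace-collapsing against the point-evaluation traces $\tau_t(f):=\mathrm{tr}(f(t))$ for $t\in[0,1]$, which shows that $t\mapsto \mathrm{tr}(\varphi(b)(t))$ is constant on $[0,1]$. If this constant is zero, positivity of $\varphi(b)$ gives $\varphi(b)(t)=0$ for every $t$ and hence $\varphi(b)=0$; otherwise $\varphi(b)(t)\neq 0$ for every $t$, and since the primitive ideal space of $Z_{p,q}$ is $[0,1]$ with simple UHF fibres this means $\varphi(b)$ is full. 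For a nonzero positive element $\iota_n(b)\in A$ the identity $\iota_n(b)=\iota_{n+k}(\varphi^k(b))$ forces $\varphi^k(b)\neq 0$ for every $k$, so the dichotomy forces $\varphi^k(b)$ to be full for all $k\geq 1$. Fullness then produces approximate factorisations $\sum_i c_i\varphi^k(b)d_i\approx 1_B$, and pushing these forward through $\iota_{n+k}$ places $1_A$ in the ideal of $A$ generated by $\iota_n(b)$. Combined with the inductive-limit presentation and uniqueness of the trace, Theorem \ref{thm:JS} then yields $A\cong\cZ$.

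The main obstacle I expect is the simplicity dichotomy. Trace-collapsing is a priori only a statement about traces, while simplicity is an ideal-theoretic statement; the bridge is the specific $C([0,1])$-algebra structure of $Z_{p,q}$ (primitive ideal space $[0,1]$, simple fibres) together with the availability of point-evaluation traces, which together force the all-or-nothing behaviour of $\varphi$ on positive elements. The diagonal interleaving needed to present $A$ as an inductive limit of prime dimension-drops is standard but must be done carefully so that Theorem \ref{thm:JS} actually applies.
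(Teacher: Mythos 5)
Your overall strategy coincides with the paper's: part (i) is obtained there in exactly the same way, as the composite $Z_{p,q}\to\cZ\to Z_{p,q}$ together with uniqueness of the trace on $\cZ$, and part (ii) is likewise reduced to Theorem~\ref{thm:JS} by checking the three hypotheses. Your treatment of simplicity is the one genuine (if modest) departure. The paper first proves that $\varphi$ is injective --- if $\Ker\varphi=I\neq 0$ then $Z_{p,q}/I$ would embed into $Z_{p,q}$, which is impossible because every non-trivial quotient of $Z_{p,q}$ has non-trivial projections while $Z_{p,q}$ has none --- and only then uses the point-evaluation traces to conclude that $\varphi(a)$ is full for every $a\neq 0$. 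Your all-or-nothing dichotomy (either $\varphi(b)=0$ or $\varphi(b)$ is full, proved exactly as in the paper from constancy of $t\mapsto\mathrm{tr}\bigl(\varphi(b)(t)\bigr)$ and faithfulness of the fibre traces) lets you bypass the injectivity/projection argument entirely: for an element surviving into the limit one automatically has $\varphi^k(b)\neq 0$ for all $k$, so fullness follows. This is correct and arguably cleaner; what it costs you is only that you must run the fullness argument along the whole tail $\varphi^k(b)$ rather than once, which you do. Your uniqueness-of-trace argument is also the intended one.

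The one place where your proposal is underpowered is the claim that a ``standard diagonal interleaving'' exhibits $A=\lim(Z_{p,q},\varphi)$ as an inductive limit of prime dimension-drop algebras $Z_{n,m}$ with $n,m$ natural and coprime, which is what Theorem~\ref{thm:JS} literally requires. The difficulty is that $\varphi$ restricted to a finite stage $Z_{p_j,q_j}\subseteq Z_{p,q}$ lands only \emph{approximately} in a later finite stage, so the interleaving produces approximate, not exact, factorizations; a priori you only get that $A$ is \emph{locally approximated} by prime dimension-drop algebras. The missing ingredient is that the $Z_{n,m}$ are weakly stable (semiprojective with respect to their defining relations, \cite[Proposition~7.3]{JiaSu:Z}), so that by Loring's theorem \cite{Lor:stableI} local approximation by such building blocks can be upgraded to an honest inductive limit presentation. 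This is exactly the step the paper makes explicit, and your proof needs it (or some substitute) to invoke Theorem~\ref{thm:JS}.
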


\begin{proof} (i). Take the composition of any unital embeddings $Z_{p,q} \to
\cZ \to Z_{p,q}$ (cf.\ the remarks above) and recall (eg.\ from
\cite{JiaSu:Z}) that $\cZ$ has a unique trace. 

(ii). We note first that the inductive limit, call it $A$, of
  the sequence above is an inductive limit of prime dimension-drop \Cs
  s, i.e., of \Cs s of 
  the form $Z_{n,m}$ with $n$ and $m$ natural numbers that are
  relatively prime. Indeed, each $Z_{p,q}$ is such an inductive
  limit, cf.\ the remarks above. Hence $A$ can locally be
  approximated by prime dimension-drop \Cs s. Each (prime)
  dimension-drop \Cs{} is weakly stable by 
  \cite[Proposition 7.3]{JiaSu:Z}, whence
  any \Cs{} that locally can be approximated by prime dimension-drop
  \Cs s is an
  actual inductive limit of them, cf.\ \cite{Lor:stableI}. 

It now follows from Jiang and Su, \cite{JiaSu:Z}, cf.\
Theorem~\ref{thm:JS}, that $A$ is isomorphic to the
Jiang--Su algebra $\cZ$ if and only if $A$ is simple and has unique
trace. 

Uniqueness of the trace of $A$ follows easily from the assumption that
$\varphi$ is trace-collapsing. 

The endomorphism $\varphi$ is necessarily injective. Indeed, if it
were not and $I$ is the kernel of $\varphi$, then $\varphi$ would induce an
embedding of $Z_{p,q}/I$ into $Z_{p,q}$. But any non-trivial quotient
of $Z_{p,q}$ has non-trivial projections (i.e., projections other than
$0$ and $1$), whereas $Z_{p,q}$ only contains the trivial projections, cf.\
the remarks in Section~\ref{sec:background}.

That $A$ is simple now follows from the fact that
$\varphi(a)$ is full in $Z_{p,q}$  for all non-zero $a \in
Z_{p,q}$. To see this, let $\pi_t \colon Z_{p,q} \to M_{pq}$ denote
the fibre map 
(for $t \in [0,1]$). Let $\tau$ be the (unique) tracial state on
$M_{pq}$. Then $t \mapsto (\tau \circ \pi_t \circ \varphi)(a^*a)$ is
constant by the assumption that $\varphi$ is trace-collapsing, and
this function is non-zero (because $a$ is non-zero and $\varphi$ is
injective). Hence $\pi_t(\varphi(a)) \ne 0$ for all $t \in [0,1]$, which
entails that $\varphi(a)$ is full in $Z_{p,q}$.
\end{proof}

\begin{proposition} \label{prop:embedZpq}
The Jiang--Su algebra $\cZ$ is the only strongly self-absorbing \Cs{}
for which there are relatively prime infinite supernatural numbers $p$
and $q$ and unital embeddings $Z_{p,q} \to \cZ \to Z_{p,q}$.
\end{proposition}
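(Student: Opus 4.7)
The plan is to invoke the absorption criterion for strongly self-absorbing \Cs s, namely Proposition~\ref{prop:inclusion}(ii): if two strongly self-absorbing \Cs s embed unitally into each other, they are isomorphic. So the task reduces to producing, for any strongly self-absorbing $\D$ sandwiched unitally between two copies of $Z_{p,q}$ (with $p,q$ relatively prime infinite supernatural numbers), unital embeddings in both directions between $\D$ and $\cZ$.

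First, I would check that $\cZ$ itself satisfies the hypothesis, so that the statement is non-vacuous. Proposition~\ref{prop:Zpq-embed} provides the unital embedding $Z_{p,q} \hookrightarrow \cZ$. In the reverse direction, Corollary~\ref{cor:ZpqZ} tells us that $\cZ \otimes Z_{p,q} \cong Z_{p,q}$, and composing the canonical unital embedding $z \mapsto z \otimes 1_{Z_{p,q}}$ with this isomorphism yields a unital embedding $\cZ \hookrightarrow Z_{p,q}$.

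Now suppose $\D$ is any strongly self-absorbing \Cs{} admitting unital embeddings $\iota_1 \colon Z_{p,q} \to \D$ and $\iota_2 \colon \D \to Z_{p,q}$ for some pair of relatively prime infinite supernatural numbers $p,q$. Composing $\iota_2$ with the unital embedding $Z_{p,q} \hookrightarrow \cZ$ of Proposition~\ref{prop:Zpq-embed} gives a unital embedding $\D \hookrightarrow \cZ$. Composing the unital embedding $\cZ \hookrightarrow Z_{p,q}$ (constructed via Corollary~\ref{cor:ZpqZ} as above) with $\iota_1$ gives a unital embedding $\cZ \hookrightarrow \D$. Proposition~\ref{prop:inclusion}(ii) then yields $\D \cong \cZ$.

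I don't foresee any serious obstacle: the entire argument is a bookkeeping exercise assembling Proposition~\ref{prop:Zpq-embed}, Corollary~\ref{cor:ZpqZ}, and Proposition~\ref{prop:inclusion}(ii). All the substantive content has already been put in place---Proposition~\ref{prop:Zpq-embed} uses the construction from \cite{Ror:Z-absorbing}, while Corollary~\ref{cor:ZpqZ} rests on the $C(X)$-algebra absorption result \cite{HirRorWin:C_0(X)}.
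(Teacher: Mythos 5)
Your argument is correct, but it is genuinely different from the one in the paper. You reduce everything to Proposition~\ref{prop:inclusion}(ii) applied to the pair $(\mathcal{D},\mathcal{Z})$: the composites $\mathcal{D}\to Z_{p,q}\to\mathcal{Z}$ and $\mathcal{Z}\to Z_{p,q}\to\mathcal{D}$ are unital and automatically injective (both $\mathcal{D}$ and $\mathcal{Z}$ are simple), and both $\mathcal{D}$ and $\mathcal{Z}$ are strongly self-absorbing, so the Toms--Winter result applies; there is no circularity, since Proposition~\ref{prop:Zpq-embed} and Corollary~\ref{cor:ZpqZ} are established independently. The paper instead runs an Elliott intertwining argument directly on the alternating system $A\to Z_{p,q}\to A\to\cdots$, identifying the limit of the $A$-subsystem with $A$ (via approximate unitary equivalence of unital endomorphisms of strongly self-absorbing algebras to the identity) and the limit of the $Z_{p,q}$-subsystem with $\mathcal{Z}$ via Theorem~\ref{thm:inductivelimit}(ii), using that $\mu\circ\lambda$ is trace-collapsing because $A$ has a unique trace. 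The reason the paper cannot simply quote Proposition~\ref{prop:inclusion}(ii) for the pair it works with is that $Z_{p,q}$ is not strongly self-absorbing; you sidestep this by routing both embeddings through $\mathcal{Z}$, at the cost of needing the pre-established embeddings $Z_{p,q}\to\mathcal{Z}\to Z_{p,q}$ as an input to the uniqueness argument itself (the paper uses them only to show the hypothesis is non-vacuous). Your proof is shorter and more modular; the paper's has the side benefit of exhibiting $A$ explicitly as the stationary limit from Theorem~\ref{thm:inductivelimit}, which is one of the intrinsic characterizations the paper is after.
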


\begin{proof} Suppose that $p$ and $q$ are infinite supernatural
  numbers that are relatively prime and that $A$ is a strongly
  self-absorbing \Cs{} for which there are unital \sh s $\lambda
  \colon Z_{p,q} \to A$ and $\mu \colon A \to Z_{p,q}$. Consider the
  inductive system
$$\xymatrix{A \ar[r]^\mu  & Z_{p,q} \ar[r]^\lambda &  A \ar[r]^\mu  & 
Z_{p,q} \ar[r]^\lambda &  A \ar[r]^\mu  & Z_{p,q} \ar[r]^\lambda &  \cdots.}$$
The inductive limit of this system coincides with the inductive limits
of the two subsystems below:
$$\xymatrix@C+1pc{A \ar[r]^-{\lambda \circ \mu} & A \ar[r]^-{\lambda \circ
    \mu} & A \ar[r]^-{\lambda \circ \mu} & \cdots}, \qquad 
\qquad
\xymatrix@C+1pc{Z_{p,q} \ar[r]^-{\mu \circ \lambda} & Z_{p,q} 
\ar[r]^-{\mu \circ \lambda} & Z_{p,q} \ar[r]^-{\mu \circ \lambda} & \cdots.}$$

Any unital endomorphism on a strongly self-absorbing \Cs{} is
approximately unitarily equivalent to the identity by \cite[Corollary
1.12]{TomsWin:Z}. It thus follows from an inductive limit argument
(after Elliott --- see for example \cite[Corollary 2.3.3]{Ror:encyc}) 
that the former
inductive system above has inductive limit isomorphic to $A$. 

As $A$ has unique trace (cf.\  \cite[Theorem 1.7]{TomsWin:Z}) 
the unital endomorphism $\mu \circ \lambda$ is
trace-collapsing. Hence the latter of the two inductive systems above
has limit isomorphic to $\cZ$ by Theorem~\ref{thm:inductivelimit}. 

This proves that $A$ is isomorphic to $\cZ$.
\end{proof}

\section{A cancellation theorem for 
the Cuntz semigroup} 
\label{sec:W-cancellation}

\noindent In this section we prove a cancellation theorem for the
Cuntz semigroup for \Cs s of stable rank one. This result, which might
be of independent interest, and which extends a recent result of
Elliott, \cite{Ell:cancellation}, is needed for the next section. 

We refer the reader to \cite{Ror:Z-absorbing} and \cite{Ror:UHFII} for
notation and 
background material on Cuntz comparison of positive elements and on
the Cuntz semigroup. 

Recall the following fact, proved in \cite{Ror:UHFII}:

\begin{proposition} \label{prop:comparison1}
Let $A$ be a unital \Cs{} of stable rank one, let $a,b$ be
positive elements in $A$ such that $a \precsim b$, and let $\ep
>0$. It follows that there is a unitary element $u \in A$ such that
$$u^*(a-\ep)_+u \in \overline{bAb}.$$
\end{proposition}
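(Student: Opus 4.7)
The plan is to reduce the problem, via a classical lemma of R\o rdam, to exhibiting a single element $w \in A$ whose ``polar decomposition'' witnesses the subequivalence, and then to use stable rank one to replace the partial isometry that a priori lives only in $A^{**}$ by an honest unitary in $A$. First I would recall R\o rdam's comparison lemma: $a \precsim b$ implies that, for every $\eta > 0$, there exists $r \in A$ with $(a-\eta)_+ = r^* b r$. Setting $y = b^{1/2} r \in A$ then yields
\[
y^* y = (a-\eta)_+, \qquad yy^* = b^{1/2} r r^* b^{1/2} \in \overline{bAb}.
\]
Using the standard identity $y f(y^*y) = f(yy^*) y$ for continuous $f$ with $f(0)=0$, applied to $f(t) = ((t-\eta')_+/t)^{1/2}$, I would set $w := y f(y^*y) \in A$ and compute
\[
w^* w = (y^*y - \eta')_+ = (a - \eta - \eta')_+, \qquad ww^* = (yy^* - \eta')_+ \in \overline{bAb},
\]
using heredity for the last inclusion.

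The crucial step now uses stable rank one. Since $\GL(A)$ is dense in $A$, approximate $w$ by invertibles $w_n \to w$, each with polar decomposition $w_n = u_n |w_n|$ and $u_n \in \mathcal U(A)$. The relation $u_n (w_n^* w_n) u_n^* = w_n w_n^*$, together with $w_n^* w_n \to w^* w$ and $w_n w_n^* \to w w^*$ in norm, gives
\[
\|u_n (w^* w) u_n^* - w w^*\| \;\le\; \|w_n^* w_n - w^* w\| + \|w_n w_n^* - w w^*\| \;\longrightarrow\; 0.
\]
Hence for any prescribed $\delta > 0$ there is a unitary $u \in A$ with $\|u (w^* w) u^* - w w^*\| < \delta$.

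Finally I would convert this approximate intertwining into the required exact containment. The norm estimate gives $u (w^*w) u^* \le w w^* + \delta \cdot 1$, so $(u(w^*w) u^* - \delta)_+ \le w w^*$, and heredity of $\overline{bAb}$ (which contains $w w^*$) yields
\[
u (w^* w - \delta)_+ u^* \;=\; (u(w^*w) u^* - \delta)_+ \;\in\; \overline{bAb}.
\]
Choosing $\eta = \eta' = \delta = \ep/3$ makes $(w^* w - \delta)_+ = (a - \ep)_+$; replacing $u$ by its adjoint produces the unitary required by the proposition.

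The main obstacle I expect is precisely the leap from an abstract Cuntz subequivalence (which, via R\o rdam's lemma and the functional-calculus identity, yields only a partial isometry in $A^{**}$ intertwining $w^*w$ and $w w^*$) to an honest unitary in $A$. Stable rank one is the exact regularity hypothesis that makes this possible: the polar-decomposition unitaries of invertible approximants of $w$ live in $A$ and, in the limit, implement the intertwining approximately; the $(\,\cdot\,-\delta)_+$ cut-off combined with heredity of $\overline{bAb}$ then upgrades the approximate statement to the exact containment demanded by the proposition.
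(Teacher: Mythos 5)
The paper does not actually prove this proposition; it is quoted from \cite{Ror:UHFII}, so you are in effect reconstructing R{\o}rdam's original argument. The first half of your proof is correct and is the standard route: producing $w\in A$ with $w^*w=(a-\eta-\eta')_+$ and $ww^*\in\overline{bAb}$ via the comparison lemma and the intertwining identity $yf(y^*y)=f(yy^*)y$ is exactly right, as is the observation that density of $\GL(A)$ yields unitaries $u_n$ with $u_n(w_n^*w_n)u_n^*=w_nw_n^*$ for invertible approximants $w_n\to w$, hence a unitary $u$ with $\|u(w^*w)u^*-ww^*\|<\delta$.

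The final step, however, has a genuine gap. From $u(w^*w)u^*\le ww^*+\delta\cdot 1_A$ you deduce $(u(w^*w)u^*-\delta)_+\le ww^*$; this implication is false, since $t\mapsto(t-\delta)_+$ is not operator monotone. Worse, the conclusion you actually need --- that $(u(w^*w)u^*-\delta)_+$ lies in the hereditary subalgebra $\overline{bAb}$ --- does not follow from the norm estimate at all: in $M_2$, take $y=e_{11}$ and $x$ the rank-one projection onto a line at small angle $\theta$ to the first basis vector; then $\|x-y\|=\sin\theta<\delta$ while $(x-\delta)_+=(1-\delta)x$ does not belong to $\overline{yM_2y}=\C e_{11}$ for any $\theta\ne 0$. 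In other words, unitaries coming from arbitrary invertible approximants of $w$ conjugate $w^*w$ only \emph{approximately} onto $ww^*$, and approximate membership in a hereditary subalgebra cannot be upgraded to exact membership by an $(\,\cdot\,-\delta)_+$ cut-off. The missing ingredient --- which is the real content of the cited result --- is the exact polar-decomposition lemma for stable rank one: if $w=v|w|$ in $A^{**}$, then for each $\delta>0$ there is a unitary $u\in A$ with $ug(|w|)=vg(|w|)$ for every continuous $g$ vanishing on $[0,\delta]$ (see \cite{Ror:UHFII}). Granting this, $u(w^*w-\delta^2)_+u^*=v(w^*w-\delta^2)_+v^*=(ww^*-\delta^2)_+\in\overline{bAb}$, and your bookkeeping with $\eta$ and $\eta'$ finishes the proof; but that lemma requires a more careful correction of the approximating unitaries than the naive limit you take.
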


\noindent The two results below show that the Cuntz semigroup $W(A)$
of a \Cs{} of stable rank one has almost cancellation:

\begin{proposition} \label{prop:cancellation0}
Let $A$ be a \Cs{} of stable rank one, let $a,b$ be positive elements
in $M_\infty(A)$, and let $p$ be a projection in $M_\infty(A)$ such
that
$$a \oplus p \precsim b \oplus p.$$
Then $a \precsim b$.
\end{proposition}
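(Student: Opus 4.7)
The plan: use Proposition~\ref{prop:comparison1} to move $\bigl((a \oplus p) - \epsilon\bigr)_+$ into the hereditary subalgebra generated by $b \oplus p$ via a unitary $u$, then correct $u$ by a second unitary \emph{lying inside this hereditary subalgebra} so that it fixes the projection $0 \oplus p$; with the projection summand pinned down, the $(a - \epsilon)_+$ summand is forced into the hereditary subalgebra generated by $b$ alone.

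Fix $\epsilon > 0$; it suffices to show $(a-\epsilon)_+ \precsim b$. We may assume everything lies in $M := M_{2n}(A)$ for some $n$, with $A$ unital (passing to the unitization otherwise) and $\|a\| < 1$ (after rescaling, which does not affect Cuntz classes). Then $\bigl((a \oplus p) - \epsilon\bigr)_+ = (a-\epsilon)_+ \oplus (1-\epsilon)p$. Proposition~\ref{prop:comparison1}, applied in $M$ (which inherits stable rank one from $A$), yields a unitary $u \in M$ with
$$u^*\bigl((a-\epsilon)_+ \oplus (1-\epsilon)p\bigr)u \;\in\; C := \overline{(b \oplus p)\, M\, (b \oplus p)}.$$

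Set $p_1 := 0 \oplus p$ and $p_2 := u^* p_1 u$. Both lie in $C$: $p_1$ because $p_1 \le b \oplus p$, and $p_2$ by applying a suitable continuous function $g$ (with $g(0) = 0$, $g(1-\epsilon) = 1$, and $g \equiv 0$ on $\sigma((a-\epsilon)_+)$, which is possible because $\|a\| < 1$) to the element in the display. The partial isometry $v := u^* p_1 \in M$ satisfies $v^* v = p_1$, $v v^* = p_2$, and, since $v = p_2 v p_1$ and $C$ is hereditary, lies in $C$. Hence $p_1 \sim_{\mathrm{MvN}} p_2$ in $C$, and because $C$ has stable rank one, there is a unitary $w \in C^\sim$ with $w p_1 w^* = p_2$.

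Set $\tilde u := u w$. Then $\tilde u^* p_1 \tilde u = w^*(u^* p_1 u) w = w^* p_2 w = p_1$, so $\tilde u$ commutes with $p_1$. Let $Y := \tilde u^* \bigl((a-\epsilon)_+ \oplus 0\bigr) \tilde u$, which is Cuntz equivalent to $(a-\epsilon)_+$. Since $u^*\bigl((a-\epsilon)_+ \oplus 0\bigr)u \in C$ (being the difference of $u^*\bigl((a-\epsilon)_+ \oplus (1-\epsilon)p\bigr) u \in C$ and $(1-\epsilon)p_2 \in C$), and since $w \in C^\sim$ normalizes $C$, we have $Y \in C$. Also $Y \perp p_1$, because $(a-\epsilon)_+ \oplus 0 \perp p_1$ and $\tilde u$ commutes with $p_1$. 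Therefore $Y \in (1-p_1) C (1-p_1)$. A direct calculation, using $(1-p_1)(b \oplus p)(1-p_1) = b \oplus 0$, shows $(1-p_1) C (1-p_1) = \overline{(b \oplus 0)\, M\, (b \oplus 0)}$, so $Y \precsim b \oplus 0 \sim b$, giving $(a-\epsilon)_+ \precsim b$.

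The main obstacle is producing the corrective unitary $w$ inside $C^\sim$ (not just in $M^\sim$): it is this containment that makes conjugation by $w$ preserve the hereditary subalgebra $C$, which is what ultimately enables the projection $p$ to cancel. This rests on the observation that the intertwining partial isometry $v$ already lies in $C$ by heredity, together with the standard fact that in a stable rank one algebra Murray--von Neumann equivalent projections are unitarily equivalent in the unitization.
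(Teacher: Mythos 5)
Your proof is correct and follows essentially the same route as the paper's: apply Proposition~\ref{prop:comparison1} to $(a\oplus p-\ep)_+$, use stable rank one of the hereditary subalgebra generated by $b\oplus p$ to correct the unitary so that it fixes the projection summand, and then observe that the corner cut by $1-p$ is exactly the hereditary subalgebra generated by $b$. The only differences are cosmetic (you keep the direct-sum notation and spell out, via the function $g$ and the identity $v=p_2vp_1$, why the relevant projection and partial isometry land in the hereditary subalgebra, which the paper leaves implicit).
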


\begin{proof} Upon replacing $A$ by a suitable matrix algebra over $A$
  we can assume that $a,b,p$ all belong to $A$ and that $a\perp p$ and
  $b \perp p$. Let $0 < \ep < 1$. As $(p-\ep)_+ = (1-\ep)p$, we can
  use Proposition~\ref{prop:comparison1} to find a unitary $u$ in the
  unitization of $A$ such that 
$$u\big((a-\ep)_+ + p\big)u^* \in \overline{(b+p)A(b+p)} \; \eqdef \; B.$$
Being a hereditary sub-\Cs{} of $A$, $B$ and hence also its unitization are of
stable rank one. Now, $upu^*$ and $p$ are equivalent projections in $B$, and so there
is a unitary $v$ in the unitization of $B$ (that we may regard as
being a sub-\Cs{} of the unitization of $A$) such that $upu^* =
vpv^*$. Note that 
$$v^*u(a-\ep)_+u^*v \in B, \qquad v^*u(a-\ep)_+u^*v \perp v^*upu^*v = p,$$
which entails that $v^*u(a-\ep)_+u^*v$ belongs to
$(1-p)B(1-p)=\overline{bAb}$. This proves that $(a-\ep)_+ \precsim b$; and as
$\ep>0$ was arbitrary, we conclude that $a \precsim b$.
\end{proof}

\begin{theorem}[Cancellation] \label{cancellation}
Let $A$ be a \Cs{} of stable rank one, and let $x,y$ be elements in the
Cuntz semigroup $W(A)$ such that
$$x + \langle c \rangle \le y + \langle (c-\ep)_+ \rangle.
$$
for some $c \in M_\infty(A)^+$ and for some $\ep>0$. Then $x \le y$.
\end{theorem}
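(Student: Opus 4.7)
The plan is to extend the proof of Proposition~\ref{prop:cancellation0} from projections to the positive element $c$, using the functional-calculus identity $(c-\ep)_+ = ((c-\delta)_+ - (\ep-\delta))_+$ (valid for $0 < \delta < \ep$) as the replacement for the scalar identity $(p-\delta)_+ = (1-\delta)p$ that was available for a projection $p$. The $\ep$-gap between $c$ and $(c-\ep)_+$ provides precisely the slack one needs in order to carry out that argument when $c$ is merely positive.

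First I would represent $x = \langle a \rangle$ and $y = \langle b \rangle$ with $a, b \in M_{n}(A)^+$, and, after passing to a sufficiently large matrix algebra over $A$, arrange that $a \perp c$ and $b \perp (c-\ep)_+$ inside $A$. The hypothesis then reads $a + c \precsim b + (c-\ep)_+$ in $A$. For each $\delta \in (0, \ep)$, Proposition~\ref{prop:comparison1} supplies a unitary $u \in \widetilde A$ with
\[
 u\bigl((a-\delta)_+ + (c-\delta)_+\bigr) u^* \;\in\; B \;:=\; \overline{(b+(c-\ep)_+)\,A\,(b+(c-\ep)_+)}.
\]
Since $(c-\ep)_+ \le (c-\delta)_+ \le (a-\delta)_+ + (c-\delta)_+$, hereditariness of $B$ forces $u(c-\ep)_+ u^*$ to lie in $B$ as well. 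Both $(c-\ep)_+$ and its unitary conjugate $u(c-\ep)_+ u^*$ are thus positive elements of $B$, Cuntz equivalent via $u$.

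The core step is to rotate $u(c-\ep)_+ u^*$ back onto $(c-\ep)_+$ inside the hereditary subalgebra $B$, which itself has stable rank one. A second application of Proposition~\ref{prop:comparison1} inside $\widetilde B$ produces a unitary $v \in \widetilde B$ that, up to an arbitrarily small perturbation, conjugates $u(c-\ep)_+ u^*$ onto $(c-\ep)_+$; here the functional-calculus identity $(c-\ep)_+ = ((c-\delta)_+ - (\ep-\delta))_+$ is what allows the perturbation to be absorbed by a suitable cut-down of $(c-\delta)_+$. The residual element $v^* u\, (a-\delta)_+\, u^* v \in B$ is then (approximately) orthogonal to $(c-\ep)_+$; applying the analogue of the computation $(1-p)B(1-p) = \overline{bAb}$ from Proposition~\ref{prop:cancellation0} --- now phrased via the orthogonal complement of $(c-\ep)_+$ inside $B$, which contains $\overline{bAb}$ by virtue of $b \perp (c-\ep)_+$ --- places this residual into $\overline{bAb}$. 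We conclude $(a-\delta)_+ \precsim b$, and letting $\delta \to 0$ gives $a \precsim b$, i.e.\ $x \le y$.

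The principal obstacle is the rotation step. For projections, equivalent projections in a stable-rank-one algebra are unitarily conjugate outright, whereas positive elements are only approximately so; the $\eta$-errors arising from Proposition~\ref{prop:comparison1} have to be routed through the $\ep-\delta$ slack to keep the orthogonality of the residual intact. I expect this to require a careful two-stage application of Proposition~\ref{prop:comparison1} (once in $\widetilde A$, once in $\widetilde B$) together with a diagonal $\delta, \eta \to 0$ argument.
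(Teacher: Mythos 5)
Your strategy---redoing the proof of Proposition~\ref{prop:cancellation0} with $(c-\ep)_+$ in place of the projection $p$---founders at exactly the step you flag as the principal obstacle, and that obstacle is not one the $(\ep-\delta)$-slack can absorb. The projection argument uses two facts special to projections: equivalent projections in a stable rank one algebra are \emph{exactly} unitarily conjugate in the unitization of $B$, and the exact orthogonal complement $(1-p)B(1-p)$ of $p$ in $B=\overline{(b+p)A(b+p)}$ is precisely $\overline{bAb}$. For positive elements, Proposition~\ref{prop:comparison1} does not conjugate $u(c-\ep)_+u^*$ onto $(c-\ep)_+$ at all (it only moves a cut-down of it into $\overline{(c-\ep)_+A(c-\ep)_+}$), and even Pedersen's theorem on approximate unitary equivalence of $|x|$ and $|x^*|$ in stable rank one algebras yields no more than $\|v^*u(c-\ep)_+u^*v-(c-\ep)_+\|<\eta$. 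The residual $v^*u(a-\delta)_+u^*v$ is then only \emph{approximately} orthogonal to $(c-\ep)_+$, and when $0$ is an accumulation point of $\spec((c-\ep)_+)$ this gives essentially no control: a positive $z\in B$ with $\|z(c-\ep)_+\|<\eta$ need not satisfy $(z-\mu)_+\precsim b$ for any small $\mu$. This is exactly the phenomenon behind the counterexample recorded after the theorem ($p\oplus b\precsim a\oplus b$ with $p\nprecsim a$), so any proof must exploit the $\ep$-gap more structurally than by absorbing perturbations; your identity $(c-\ep)_+=((c-\delta)_+-(\ep-\delta))_+$ budgets spectral cut-downs of $c$, whereas the errors produced by the rotation are norm errors on a different element, and the two do not match up.

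The paper's proof sidesteps rotation of positive elements entirely. With $h_\ep$ as in \eqref{h} (equal to $1$ at $0$ and vanishing beyond $\ep$), one has $(c-\ep)_+\perp h_\ep(c)$ while $c+h_\ep(c)$ is invertible. Adding $\langle h_\ep(c)\rangle$ to both sides of the hypothesis therefore gives
$$a\oplus 1_A\;\precsim\; a\oplus\big(c+h_\ep(c)\big)\;\precsim\; a\oplus c\oplus h_\ep(c)\;\precsim\; b\oplus(c-\ep)_+\oplus h_\ep(c)\;\precsim\; b\oplus 1_A,$$
and now Proposition~\ref{prop:cancellation0} cancels the honest projection $1_A$, where exact unitary conjugation and exact orthogonal complements are available. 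This reduction to the projection case is the idea missing from your outline.
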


\begin{proof} Upon replacing $A$ by a matrix algebra over $A$ we can
  assume that $c$ belongs to $A$, and that $x = \langle a \rangle$, $y
  = \langle b \rangle$ for some positive elements $a,b$ in $A$ with $a
  \perp c$ and $b \perp c$. Next, upon adjoining a unit to $A$ we may assume that $A$
  is unital (this will not affect the comparison of the elements
  $a,b,c$). Let $h_\ep \colon \R^+ \to \R^+$ be given by
\begin{equation} \label{h}
h_\ep(t) = \begin{cases} \ep^{-1}(\ep-t), & 0 \le t \le \ep,
\\ 0, & t \ge \ep.\end{cases}
\end{equation}
Then $(c-\ep)_+ \perp h_\ep(c)$ and $c+h_\ep(c)$ is invertible. Hence
\begin{eqnarray*}
a \oplus 1_A & \precsim & a \oplus (c + h_\ep(c)) \; \precsim \; a
\oplus c \oplus h_\ep(c) \\
&\precsim & b \oplus (c-\ep)_+ \oplus h_\ep(c) \; \precsim \; b \oplus
((c-\ep)_+ + h_\ep(c)) \\
& \precsim & b \oplus 1_A.
\end{eqnarray*}
The claim now follows from Proposition~\ref{prop:cancellation0}.
\end{proof}

\noindent One cannot strengthen Theorem~\ref{cancellation} to the
more intuitive statement: $x + z \le y + z$ implies $x \le y$, when $x,y,z$ are
elements in the Cuntz semigroup, $W(A)$, of an arbitrary \Cs{} $A$ of stable
rank one. Indeed, if one takes $A$ to be a UHF algebra with trace
$\tau$, $p$ to be a projection, and $a, b$ to be positive elements in
$A$ such that  
$$ \tau(p) = d_\tau(a) \; (= \lim_{n \to \infty} \tau(a^{1/n})),$$
and such that $0$ is an accumulation point of $\spec(a)\setmin \{0\}$
and of $\spec(b)\setmin\{0\}$, then $p \nprecsim a$ but $p \oplus b
\precsim a \oplus b$ (see \cite{BPT:cuntz-semigroup} for more details).

We shall also need the lemma below for the next section. First we fix
some notation to be used here and in the sequel.

\begin{notation}
\label{g-eta-notation}
For positive numbers $0\le \eta < \ep \le 1$ define continuous 
functions $f_\ep, g_{\eta,\ep} \colon [0,1] \to \R^+$ by
\[
g_{\eta,\ep}(t)= \begin{cases}0, & t \le \eta,\\ 1, & \ep \le t \le 1,
\\ \text{linear,} & \text{else,}\end{cases} \qquad f_\ep = g_{0,\ep}. 
\]
\end{notation}

\begin{lemma} \label{lm:cancellation2}
Let $A$ be a unital \Cs{} of stable rank one, and let $a,b \in A^+$ be
such that $\langle a \rangle + \langle b  \rangle \ge \langle 1_A
\rangle$. Then $1_A - f_\ep(a) \precsim (b-\ep)_+$ for some $\ep >0$. 
\end{lemma}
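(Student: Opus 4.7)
My plan is to deploy Theorem~\ref{cancellation} after preparing the hypothesis to match its form. First I would upgrade $\langle a \rangle + \langle b \rangle \geq \langle 1_A \rangle$, i.e.\ $1_A \precsim a \oplus b$, to the stronger statement that $1_A \precsim (a-\delta)_+ \oplus (b-\delta)_+$ for some $\delta > 0$. This is a routine approximation: a witness $r \in M_{1,2}(A)$ with $r(a \oplus b) r^* = (1_A - \eta)_+$ also nearly conjugates $(a-\delta)_+ \oplus (b-\delta)_+$ to the same element once $\delta$ is small enough, whence $(1_A - 3\eta/2)_+ \precsim (a-\delta)_+ \oplus (b-\delta)_+$; for $\eta$ small this is $1_A \precsim (a-\delta)_+ \oplus (b-\delta)_+$.

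Next I set $\ep := \delta/2$. The elements $1_A - f_\ep(a)$ and $(a-\ep)_+$ are continuous functions of $a$ with disjoint supports on $\spec(a)$, hence orthogonal positive elements of $A$; moreover (assuming $\|a\| \le 1$, which may be arranged without loss of generality by passing to the Cuntz-equivalent $f_1(a)$) their sum is pointwise bounded by $1$. Combining the orthogonal-sum identity in $W(A)$ with this bound and with the first step (applied to $\delta = 2\ep$), I obtain
\begin{equation*}
\langle 1_A - f_\ep(a) \rangle + \langle (a-\ep)_+ \rangle
\;\leq\; \langle 1_A \rangle
\;\leq\; \langle (a-2\ep)_+ \rangle + \langle (b-2\ep)_+ \rangle.
\end{equation*}

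The crucial observation is that $(a-2\ep)_+ = ((a-\ep)_+ - \ep)_+$, so the displayed inequality is of exactly the shape $x + \langle c \rangle \leq y + \langle (c-\ep)_+ \rangle$ required by Theorem~\ref{cancellation}, with $x = \langle 1_A - f_\ep(a)\rangle$, $y = \langle (b-2\ep)_+\rangle$ and $c = (a-\ep)_+$. Cancellation then yields $1_A - f_\ep(a) \precsim (b-2\ep)_+$, and the pointwise inequality $(b-2\ep)_+ \leq (b-\ep)_+$ gives the conclusion.

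The step I expect to be the main obstacle is the first one: without replacing $a,b$ by their cut-downs, the inequality one extracts from orthogonality has $\langle 1_A \rangle \leq \langle a \rangle + \langle b \rangle$ on the right, and $\langle a \rangle$ is the wrong end of $\langle (a-2\ep)_+ \rangle \leq \langle a \rangle$ to feed into the cancellation theorem. Passing to $(a-\delta)_+$ and $(b-\delta)_+$ installs the strict $(c-\ep)_+$ in the correct slot, which is exactly what reduces the argument to a one-line application of Theorem~\ref{cancellation}.
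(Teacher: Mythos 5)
Your proposal is correct and follows essentially the same route as the paper: both pass from $\langle a\rangle+\langle b\rangle\ge\langle 1_A\rangle$ to $\langle (a-\delta)_+\rangle+\langle (b-\delta)_+\rangle\ge\langle 1_A\rangle$, exploit the orthogonality $1_A-f_\ep(a)\perp (a-\ep)_+$ for $\ep<\delta$, and then apply Theorem~\ref{cancellation} with $c=(a-\ep)_+$, using the identity $(a-\delta)_+=\bigl((a-\ep)_+-(\delta-\ep)\bigr)_+$. The only cosmetic differences are your specific choice $\ep=\delta/2$ and the (unnecessary but harmless) normalization $\|a\|\le 1$; in a unital algebra the sum of two orthogonal positive elements is automatically $\precsim 1_A$.
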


\begin{proof} As $\langle (1_A - \ep)_+ \rangle = \langle 1_A \rangle$
  for all $\ep \in [0,1)$ one can conclude from \cite{Ror:UHFII} that
here exists $\delta >0$ such that 
$$\langle (a-\delta)_+ \rangle + \langle (b-\delta)_+  \rangle 
\ge \langle 1_A \rangle.$$
Take $\ep$ such that $0 < \ep < \delta$. 
Observe that $1_A - f_\ep(a) \perp (a-\ep)_+$. It follows that
\begin{eqnarray*}
\langle 1_A - f_\ep(a) \rangle + \langle (a-\ep)_+ \rangle & \le & 
\langle 1_A \rangle \; \le \;  \langle (a-\delta)_+ \rangle + \langle
(b-\delta)_+ \rangle \;   \le \; \langle (b-\ep)_+ \rangle + \langle
(a-\delta)_+ \rangle \\ & = & \langle (b-\ep)_+ \rangle + \big\langle
\big((a-\ep)_+ -(\delta-\ep)\big)_+ \big\rangle. 
\end{eqnarray*}
By Theorem~\ref{cancellation} this
implies that $1_A - f_\ep(a) \precsim (b-\ep)_+$. 
\end{proof}

\section{An axiomatic description of the Jiang--Su algebra} 
\label{sec:abstract-char1}

\noindent The main result of this section is Theorem~\ref{thm:Z-char}
below in which a new characterization of the Jiang--Su algebra is
given. The proof uses facts about the Cuntz semigroup and
comparison theory for positive elements derived in the previous
section. 

Two positive elements $a$ and $b$ in a \Cs{} $A$ are said to
be \emph{equivalent}, written $a \sim b$, 
if there is $x \in A$ such that $a=x^*x$ and $b=xx^*$. It is easy to
see that $a \sim b$ implies $\langle a \rangle = \langle b
\rangle$ in $W(A)$. Recall the definition of the dimension-drop \Cs{} $Z_{p,q}$
from Section~\ref{sec:background}.

\begin{proposition} \label{prop:embedding1}
Let $A$ be a unital \Cs{} of stable rank one, and let $n$ be a natural
number. The following four conditions are equivalent:
\begin{enumerate}
\item There exists $x \in W(A)$ such that $nx \le \langle 1_A \rangle
  \le (n+1)x$.
\item There exist $\ep > 0$ and  mutually equivalent and orthogonal 
positive elements $b_1,b_2, \dots, b_n$ in $A$ such that
$$1_A - (b_1+b_2+ \cdots + b_n) \precsim (b_1-\ep)_+.$$
\item There are elements $v,s_1, s_{2}, \ldots,s_{n} \in A$ of norm 1 such that
\begin{equation}
\label{a}
s_{1}^{*}s_{1} = s_{i}s_{i}^{*}, \qquad s_{i}^{*}s_{i} \perp
s_{j}^{*}s_{j}, \qquad v^{*}v = 1_{A} - \sum_{k=1}^{n} s_{k}^{*}s_{k},
\qquad v v^{*} s_{1}^{*}s_{1} = v v^{*},
\end{equation}
for all $i$ and $j$ with $i \ne j$. 
\item There is a unital $^{*}$-homomorphism from the \Cs{} $Z_{n,n+1}$ into $A$. 
\end{enumerate}
The hypothesis of stable rank one is only needed for the implication
{\rm (i) $\Rightarrow$ (ii)}.
\end{proposition}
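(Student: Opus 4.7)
The plan is to prove the four conditions equivalent by closing the cycle $(\mathrm{iv})\Rightarrow(\mathrm{i})\Rightarrow(\mathrm{ii})\Rightarrow(\mathrm{iii})\Rightarrow(\mathrm{iv})$, with stable rank one entering only in $(\mathrm{i})\Rightarrow(\mathrm{ii})$. For $(\mathrm{iv})\Rightarrow(\mathrm{i})$, I would pick a positive element $a\in Z_{n,n+1}$ whose fiberwise rank equals $n+1$ at $t=0$, equals $n$ at $t=1$, and takes values in $\{n, n+1\}$ throughout $[0,1]$; such $a$ exists because the subhomogeneous structure forces the rank at $t=0$ to be a multiple of $n+1$ and the rank at $t=1$ to be a multiple of $n$, and one may interpolate between $e_{11}^{(n)}\otimes 1_{n+1}$ and $1_n\otimes e_{11}^{(n+1)}$ by letting a single eigenvalue vanish at an interior transition point. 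Since $Z_{n,n+1}$ is a subhomogeneous \Cs{} of stable rank one, pointwise rank comparison controls Cuntz comparison, so $n\langle a\rangle\le\langle 1\rangle\le(n+1)\langle a\rangle$ in $W(Z_{n,n+1})$; pushing forward via a unital $\varphi\colon Z_{n,n+1}\to A$ and setting $x:=\langle\varphi(a)\rangle$ yields (i).

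For $(\mathrm{i})\Rightarrow(\mathrm{ii})$, write $x=\langle a\rangle$. Since Cuntz subequivalence is witnessed by cut-downs, there exist $\delta>\delta'>0$ with $\langle 1_A\rangle\le(n+1)\langle(a-\delta)_+\rangle$. The stable rank one hypothesis---through Proposition~\ref{prop:comparison1} applied iteratively in progressively smaller hereditary subalgebras---realizes the inequality $n\langle(a-\delta')_+\rangle\le\langle 1_A\rangle$ by pairwise orthogonal, Murray--von Neumann equivalent positive elements $b_1,\ldots, b_n\in A$ with $\langle b_i\rangle=\langle(a-\delta')_+\rangle$. Set $c:=\sum b_i$ and $\ep:=\delta-\delta'>0$; orthogonality gives $\langle c\rangle=n\langle b_1\rangle$ and $\langle(c-\ep)_+\rangle=n\langle(b_1-\ep)_+\rangle$, hence
\[
\langle 1_A-c\rangle+\langle c\rangle=\langle 1_A\rangle\le(n+1)\langle(b_1-\ep)_+\rangle=\langle(b_1-\ep)_+\rangle+\langle(c-\ep)_+\rangle.
\]
Theorem~\ref{cancellation} cancels the $\langle c\rangle$-term to produce $\langle 1_A-c\rangle\le\langle(b_1-\ep)_+\rangle$, which is (ii).

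For $(\mathrm{ii})\Rightarrow(\mathrm{iii})$, after rescaling assume $\|b_i\|=1$. Replacing each $b_i$ by $\tilde b_i:=f_\ep(b_i)$ preserves pairwise orthogonality and mutual equivalence (with new implementing partial isometries realized in $A$ by multiplying the original MvN-implementers by a bounded continuous function of $b_i$) and crucially forces $1$ into the spectrum of $\tilde b_1=s_1^*s_1$; taking $s_1:=\tilde b_1^{1/2}$ positive gives the first three relations of (iii). The element $v$ is the delicate step: the Cuntz subequivalence $1_A-\sum b_i\precsim(b_1-\ep)_+$ supplies $r\in A$ with $r(b_1-\ep)_+r^*$ arbitrarily close to $1_A-\sum\tilde b_i$, and a polar-decomposition adjustment---using that $(b_1-\ep)_+$ sits inside the $1$-spectral subalgebra of $\tilde b_1$---yields $v\in A$ with $v^*v=1_A-\sum s_k^*s_k$ exactly and $vv^* s_1^*s_1=vv^*$.

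For $(\mathrm{iii})\Rightarrow(\mathrm{iv})$, apply Proposition~\ref{order-zero-facts-2} to the $s_i$'s to obtain a c.p.c.\ order zero map $\alpha\colon M_n\to A$ with $\alpha(e_{ij})=s_i^*s_j$ and $\alpha(1_n)=1_A-v^*v$. A short manipulation starting from $vv^*s_1^*s_1=vv^*$ in $A^{**}$ yields $\alpha(e_{11})v=v$, placing $v|v|$ in the region where the partial isometries $\bar\alpha(e_{k1})$ of the supporting $^*$-homomorphism of $\alpha$ can act; spreading $v|v|$ by these isometries, adjusted by a factor of $\alpha(e_{11})^{-1/2}$ that is absorbed by the $1$-eigenspace condition, produces $n+1$ elements in $A$ satisfying the relations $(\mathcal{R}_{n+1})$ of Proposition~\ref{order-zero-facts-2} and commuting with $\alpha(M_n)$. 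These elements assemble into a c.p.c.\ order zero map $\beta\colon M_{n+1}\to A$ with $\beta(1_{n+1})=v^*v=1_A-\alpha(1_n)$, and Proposition~\ref{universal-dimension-drop} then delivers the unital $^*$-homomorphism $Z_{n,n+1}\to A$. I expect the main obstacle to be the joint construction spanning $(\mathrm{ii})\Rightarrow(\mathrm{iii})$ and $(\mathrm{iii})\Rightarrow(\mathrm{iv})$: arranging $v^*v=1_A-\sum s_k^*s_k$ with \emph{exact} equality so that the $A^{**}$-level partial isometries used to build $\beta$ actually pass to elements of $A$ with the required commutation with $\alpha(M_n)$.
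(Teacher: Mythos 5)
The main gap is in (iii) $\Rightarrow$ (iv). The construction you describe --- take $\alpha(e_{ij})=s_i^*s_j$ and then ``spread'' $v|v|$ around by the supporting partial isometries to obtain $n+1$ elements satisfying $(\mathcal{R}_{n+1})$ --- is precisely the naive first attempt that the paper writes down (the maps $\mu$ and $\rho$ with $t_1=(v^*v)^{1/2}$, $t_{j+1}=v^*s_j$) and then explicitly discards: those two order zero maps do \emph{not} have commuting images, and their units do not sum to $1_A$ (the sum of the $n+1$ orthogonal pieces is $v^*v+\sum_j s_j^*vv^*s_j$, which is strictly larger than $v^*v$, so your claim $\beta(1_{n+1})=v^*v$ already fails). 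Nothing in your sketch addresses either failure; the assertion that the spread elements ``commute with $\alpha(M_n)$'' is exactly the point that is false. The paper's actual proof of this implication is a lengthy construction: a $^*$-homomorphism $\varphi(c)=\sum_i s_i^*\bar v c\bar v^* s_i$ copying the corner $\overline{v^*vAv^*v}$ into its orthogonal complement, cone algebras over $M_n$ and $M_n\otimes M_n$, and a path of unitaries implementing the flip on $M_n\otimes M_n$ whose image $w$ is used to twist the generators (namely $x_i=\lambda(g\otimes 1_n)^{1/2}\bar v^* s_i w$) so that the resulting order zero maps $M_{n+1}\to A$ and $M_n\to A$ genuinely commute and their units sum to $1_A$. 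This rotation trick is the heart of the proposition and is absent from your proposal; flagging the difficulty at the end does not substitute for resolving it.

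There is also a genuine error in (i) $\Rightarrow$ (ii): the identity $\langle 1_A-c\rangle+\langle c\rangle=\langle 1_A\rangle$ is false in $W(A)$ for a general positive $c\le 1_A$ (take $c=\tfrac12 1_A$; the left-hand side is $2\langle 1_A\rangle$), and without it your application of Theorem~\ref{cancellation} has no valid inequality to cancel from. What is true, and what the paper exploits via Lemma~\ref{lm:cancellation2}, is the orthogonality $1_A-f_\ep(c)\perp(c-\ep)_+$, which yields $\langle 1_A-f_\ep(c)\rangle+\langle(c-\ep)_+\rangle\le\langle 1_A\rangle$; accordingly one must pass to $b_j=f_\eta(e_j)$. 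This replacement also guarantees $\sum b_j\le 1_A$, which is needed for $1_A-\sum b_j$ in condition (ii) to be a positive element and which your construction does not arrange. The steps (ii) $\Rightarrow$ (iii) and (iv) $\Rightarrow$ (i) are in the right spirit (the paper handles the latter by citing a lemma of R{\o}rdam rather than by the fiberwise rank argument, but that difference is harmless).
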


\begin{proof} (i) $\Rightarrow$ (ii). Find a positive element $d$ in
  some matrix algebra $M_k(A)$ over $A$ such that $x = \langle d
  \rangle$. There is $\delta > 0$ such that $(n+1) \langle
  (d-\delta)_+ \rangle \ge \langle 1_A \rangle$ (cf.\
  \cite{Ror:UHFII}). As $n\langle d \rangle \le \langle 1_A \rangle$
there is a row matrix $t \in M_{1,nk}(A)$ such that
$$t^*t= t^*1_A t =  (d-\delta)_+ \oplus (d-\delta)_+ \oplus  \cdots \oplus
(d-\delta)_+.$$ 
Write $t = (t_1,t_2, \dots, t_n)$ with $t_i \in M_{1,k}(A)$. Then
$$t_i^*t_j = \begin{cases} (d-\delta)_+, & i=j,\\ 0, & i \ne j. \end{cases}$$
Put $e_j = t_jt_j^*$. Then $e_1, e_2, \dots, e_n$ are pairwise
orthogonal positive elements in $A$ each of which is equivalent to
$(d-\delta)_+$. It follows in particular that
$$\langle 1_A \rangle \le (n+1) \langle (d-\delta)_+ \rangle = (n+1)
\langle e_1 \rangle = 
\langle e_1 + e_2 + \cdots + e_n \rangle + \langle e_1 \rangle.$$
Now use Lemma~\ref{lm:cancellation2} (and recall from \ref{g-eta-notation} the
definition of the function $f_\eta$) to see that there exists $\eta
>0$ such that
$$1_A - f_\eta(e_1+e_2+ \cdots + e_n) \precsim (e_1-\eta)_+.$$
Put $b_j = f_\eta(e_j)$. Note that $e_1 \precsim f_\eta(e_1) = b_1$
(and also $b_1 \precsim e_1$), so there exists $\ep > 0$ such that
$(e_1-\eta)_+ \precsim (b_1-\ep)_+$ (see \cite{Ror:UHFII}). 
It now follows that the elements $b_1,b_2, \dots, b_n$ are as desired,
because $f_\eta(e_1+e_2+ \cdots + e_n) = b_1 +
b_2 + \cdots + b_n$.

(ii) $\Rightarrow$ (iii). We may assume that
$\varepsilon<1$. Since each $b_{i}$ is equivalent to $b_{1}$, there
are $x_2, \dots, x_n \in A$ such that $x_ix_i^* = b_1$ and $x_i^*x_i =
b_i$. Let $x_i = v_i|x_i| = |x_i^*|v_i$ be the polar decomposition
with $v_i$ a partial isometry in $A^{**}$. Put $s_1 = f_\ep(b_1)^{1/2}$
and put $s_i =
v_if_\ep(b_i)^{1/2} =
f_\ep(b_1)^{1/2}v_i \in A$ for $i=2,3, \dots, n$ (cf.\
\ref{g-eta-notation}). Then $s_is_i^* = f_\ep(b_1) = s_1^*s_1$ for all
$i$, and
$s_i^*s_i=f_\ep(b_i)$ whence $s_i^*s_i \perp s_j^*s_j$ when $i \ne
j$. 

Note that $1 - (f_\ep(b_1) + \cdots + f_\ep(b_n))$ belongs to the
hereditary sub-\Cs{} generated by $(1 -
(b_1+\cdots+b_n)-(1-\ep))_+$. Choose $0 < \eta < 1-\ep$ and note that
$g_{\eta,1-\ep}(1-(b_1+\cdots +b_n))$ is a unit for $(1 -
(b_1+\cdots+b_n)-(1-\ep))_+$ and hence also for $1 - (f_\ep(b_1) + \cdots +
f_\ep(b_n))$ (cf.\
\ref{g-eta-notation}). It follows from the hypothesis and
\cite[Proposition~2.4]{Ror:UHFII} that there is $x \in A$ such that 
$$x^*x = g_{\eta,1-\ep}(1-(b_1+\cdots +b_n)), \qquad xx^* \in
\overline{(b_{1}-\varepsilon)_{+} A (b_{1}-\varepsilon)_{+}}.$$
Set
\[
v= x \big(1_{A} - (f_\ep(b_{1}) + \cdots + f_\ep(b_{n}))\big)^{1/2}.
\]
Then
\[
v^{*}v = 1_{A} - (f_\ep(b_{1}) + \cdots + f_\ep(b_{n})) = 1_A -
\sum_{k=1}^n s_k^*s_k.
\]
Since $vv^*$ belongs to $\overline{(b_{1}-\varepsilon)_{+} A
  (b_{1}-\varepsilon)_{+}}$ and $(b_1-\ep)_+f_\ep(b_1)
= (b_1-\ep)_+$, we get that 
$$vv^*s_1^*s_1 = vv^*f_\ep(b_1) = vv^*.$$

(iii) $\Rightarrow$ (iv). In the light of
Proposition~\ref{universal-dimension-drop},  
it suffices to construct order zero c.p.c.\ maps 
$\alpha \colon M_{n+1} \to A$ and  $\beta \colon M_{n} \to A$ with
commuting images such that $\alpha(1_{n+1}) + \beta(1_{n}) = 1_{A}$.

The construction of $\alpha$ and $\beta$ (and the verification that
they have the desired properties) is rather long and tedious. It may
be constructive to note that one quite easily can write down order
zero c.p.c.\ maps $\mu \colon M_{n+1} \to A$ and  $\rho \colon M_{n}
\to A$ such that $\mu(1_{n+1}) + \rho(1_n) \ge 1_A$. Indeed, put
$$t_1 = (v^*v)^{1/2}, \qquad t_{j+1} = v^*s_j \quad (j=1,2, \dots,
n).$$
One can easily verify that the elements $s_1,s_2, \dots, s_n$
satisfy the relations (${\mathcal{R}}_n$) of
Proposition~\ref{order-zero-facts-2}, and that $t_1,t_2, \dots,
t_{n+1}$ satisfy the relations (${\mathcal{R}}_{n+1}$). It therefore
follows from Proposition~\ref{order-zero-facts-2} that there are order
zero c.p.c.\ maps 
$$\mu \colon M_{n+1} \to A, \quad \mu(e_{ij}^{(n+1)}) = t_i^*t_j, \qquad
\rho \colon M_n \to A, \quad \rho(e_{ij}^{(n)}) = s_i^*s_j.$$
These maps fail to have commuting images, and $\mu(1_{n+1}) +
\rho(1_n)$ is larger than but not equal to $1_A$.  We shall in
the following modify these maps so that they get the desired
properties. In the process we shall make much use of the map $\rho$, but 
we shall make no further explicit use of the map $\mu$. 
   
Upon replacing $s_1$ by $(s_1^*s_1)^{1/2}$ we may assume that
$s_{1}\ge 0$. Let $\bar{v}|v|$ be the polar decomposition of $v$ with
$\bar{v}$ a partial isometry in $A^{**}$ and $|v| = (v^*v)^{1/2}$. 
Let us note some relations satisfied by the elements $v, \bar{v}, 
s_1, \dots, s_n$ to be used later in the proof:
\begin{itemize}
\item[(a)] $s_1v=v$, $s_1 \bar{v} = \bar{v}$.
\item[(b)] $s_jv = s_j\bar{v} = 0$ for $j=2,3,\dots, n$. 
\item[(c)] $vv^* \perp v^*v$, 
\item[(d)] $s_is_j=0$ for all $i = 2,3, \dots, n$, $j=1,2,\dots,n$. 
\item[(e)] $[s_i,v^*v] = [s_i^*,v^*v] = 0$ for all $i=1, \dots, n$,
\item[(f)] $c \bar{v}^*\bar{v} = c$ for all $c \in
  \overline{v^*vAv^*v}$.
\item[(g)] $\bar{v}c \in A$ for all  $c \in \overline{v^*vAv^*v}$.
\end{itemize}

\noindent The first part of (a) follows by the hypothesis that $vv^* =
s_1^*s_1vv^*$, and the second part of (a) follows from the first part and
standard properties of the polar decomposition.
To see (b) use that $s_j^*s_jvv^* = s_j^*s_js_1^*s_1vv^*=0$. Next,
$$v^*v \gange vv^* = (1_A - \sum_{j=1}^n s_j^*s_j)vv^* 
\overset{\text{(b)}}{=} (1_A - s_1^*s_1)vv^* \overset{\text{\eqref{a}}}{=} 0,$$
whence (c) holds. For $i \ne 1$ we have $s_i^*s_is_js_j^* = s_i^*s_is_1^*s_1
=0$, so (d) holds.  For $i =1,2, \dots, n$ one has 
$$v^*vs_i = (1_A - \sum_{j=1}^n s_j^*s_j)s_i \overset{\text{(d)}}{=} 
s_i - s_1^*s_1 s_i \overset{\text{\eqref{a}}}{=} 
s_i - s_is_i^*s_i \overset{\text{(d),\eqref{a}}}{=} 
s_i(1_A - \sum_{j=1}^n s_j^*s_j)=s_iv^*v.$$
This proves (e). (f) and (g) are well-known properties of the polar
decomposition. 

Recall the definition of the order zero c.p.c.\ map $\rho$ from above,
and associate to it the
supporting \sh{} $\bar{\rho} \colon M_n \to A^{**}$ defined in
Proposition~\ref{order-zero-facts}~(ii). Note (from 
\eqref{a} and Proposition~\ref{order-zero-facts}~(ii)) that

\begin{itemize}
\item[(h)] $\rho(1_n) = 1_A-v^*v$,
\item[(i)] $\rho(1_n)\bar{\rho}(x) = \rho(x) \in A$ for all $x \in
  M_n$.
\end{itemize}
Define a map $\varphi \colon \overline{v^*vAv^*v}
\to A$ by 
$$\varphi(c) = \sum_{i=1}^n s_i^*\bar{v}c\bar{v}^*s_i, \qquad c \in
\overline{v^*vAv^*v},$$ 
cf.\ (g). The map $\varphi$ is clearly linear and hermitian, and, as
shown below, it is actually a \sh. Take $c_1,c_2 \in
\overline{v^*vAv^*v}$ and calculate:
\begin{eqnarray*}
\varphi(c_1)\varphi(c_2) &=& \sum_{i,j=1}^n
s_i^*\bar{v}c_1\bar{v}^*s_is_j^*\bar{v}c_2\bar{v}^*s_j \; = \;  
\sum_{i=1}^n s_i^*\bar{v}c_1\bar{v}^*s_is_i^*\bar{v}c_2\bar{v}^*s_i\\
&=& \sum_{i=1}^n
s_i^*\bar{v}c_1\bar{v}^*s_1^*s_1\bar{v}c_2\bar{v}^*s_i\;
\overset{\text{(a)}}{=} \; 
\sum_{i=1}^n s_i^*\bar{v}c_1\bar{v}^*\bar{v}c_2\bar{v}^*s_i\\
&\overset{\text{(f)}}{=}& \sum_{i=1}^n
s_i^*\bar{v}c_1c_2\bar{v}^*s_i \; = \; \varphi(c_1c_2),\\
\end{eqnarray*}
where we in the second and third equation have used the relations for the
$s_i$'s from \eqref{a}. 
We note the following relations concerning the \sh{} $\varphi$:
\begin{itemize}
\item[(j)] $\varphi(c)\bar{v} = \bar{v}c$ for all $c \in
  \overline{v^*vAv^*v}$,
\item[(k)] $[\varphi(c),s_i] = [\varphi(c),s_i^*]= 0$ for all $c \in
  \overline{v^*vAv^*v}$ and $i=1,2,\dots,n$, 
\item[(l)] $[\varphi(c),\bar{\rho}(x)] = 0$ for all $c \in
  \overline{v^*vAv^*v}$ and  $x \in M_n$,
\item[(m)] $\varphi(\overline{v^*vAv^*v}) \perp \overline{v^*vAv^*v}$.
\end{itemize} 
We first prove (j):  
$$\varphi(c)\bar{v} \overset{\text{(b)}}{=} 
s_1^*\bar{v}c\bar{v}^*s_1\bar{v} \overset{\text{(a)}}{=}
\bar{v}c\bar{v}^*\bar{v} \overset{\text{(f)}}{=} 
\bar{v}c.$$ 
Next, for $i=1,2, \dots, n$ we have $$\varphi(c)s_i \overset{\text{(d)}}{=}
s_1^*\bar{v}c\bar{v}^*s_1s_i  \overset{\text{(a)}}{=} \bar{v}c\bar{v}^*s_i 
\overset{\text{(a)}}{=}
s_1^*s_1\bar{v}c\bar{v}^*s_i \overset{\text{\eqref{a}}}{=} s_is_i^*\bar{v}c\bar{v}^*s_i
\overset{\text{\eqref{a}}}{=} s_i\varphi(c),$$
hence (k) holds. The image of $\bar{\rho}$ is contained in the weak closure (in $A^{**}$)
of the \Cs{} generated by the $s_i$'s, so (l) follows from (k). The
calculation:
$$\varphi(v^*v)v^*v = \sum_{j=1}^n s_i^*\bar{v}v^*v\bar{v}^*s_iv^*v = 
\sum_{j=1}^n s_i^*vv^*s_i v^*v \overset{\text{(e)}}{=} 
\sum_{j=1}^n s_i^*vv^*v^*vs_i \overset{\text{(c)}}{=} 0,$$
shows that (m) holds.

Consider the two \Cs s:
\begin{eqnarray*}
D_1 &=& \{f \in C_0([0,1),M_n) \mid f(0) \in \C \gange 1_n\},\\
D_2 &=& \{f \in C_0([0,1),M_n \otimes M_n) \mid f(0) \in \C \gange 1_n \otimes 1_{n}\}.
\end{eqnarray*}
Note (by (e)) that $v^*v$ commutes with $\rho(M_n)$ and (hence) with
$\bar{\rho}(M_n)$. 
The \sh{} $\bar{\lambda} \colon C([0,1],M_n) = C([0,1]) \otimes
M_n \to A^{**}$ given by 
$$\bar{\lambda}(f \otimes x) = f(1-v^*v) \bar{\rho}(x), \qquad f \in
C([0,1]), \; x \in M_n,$$
restricts to a \sh{} $\lambda \colon D_1 \to \overline{v^*vAv^*v}$. 
Indeed, $D_1$ is
generated as a \Cs{} 
by the elements $(1-\iota) \otimes 1_n$ and $\iota(1-\iota)
\otimes x$, $x \in M_n$, (where $\iota(t) = t$), and 
$$\bar{\lambda}((1-\iota) \otimes 1_n) = v^*v, \qquad
\bar{\lambda}(\iota(1-\iota) \otimes x) = (1-v^*v)v^*v\bar{\rho}(x)
\overset{\mathrm{(h),(i)}}{=} v^*v\rho(x) \in \overline{v^*vAv^*v}.$$
By (l) we can define a \sh{} $\gamma \colon D_2 \to A$ by
$$\gamma(f \otimes x \otimes y) = (\varphi \circ \lambda)(f \otimes
x)\bar{\rho}(y), \qquad f \in C_0([0,1)), \; x,y \in M_n. $$
To see that the image of $\gamma$ is contained in $A$ (rather than in
$A^{**}$) observe that the image of $\varphi$ is contained the
hereditary sub-\Cs{} of $A$ generated by $\sum_{i=1}^n s_i^*s_i = 1_A
- v^*v = \rho(1_n)$, and so by (i), $\varphi(c)\bar{\rho}(y)$
belongs to $A$ for all $c \in \overline{v^*vAv^*v}$ and $y \in M_n$.

Let $u \in C([0,1], M_n \otimes M_n) \cap \cM(D_2)$ be a self-adjoint
unitary such that 
$$u(t) = 1_n \otimes 1_n \quad (0 \le t \le 1/3), \qquad  
u(t)(x \otimes y)u(t) = y \otimes x \quad (2/3 \le t \le 1),$$ 
for all $x,y \in M_n$. Put $w = \gamma^{**}(u) \in A^{**}$ (where
$\gamma^{**} \colon D_2^{**} \to A^{**}$ is the canonical extension of
$\gamma$). Let $g \in C_0([0,1))$ be given by 
$$g(t) = \begin{cases} 1, & 0 \le t \le 2/3, \\
0, & t=1,\\
\text{linear}, & 2/3 < t < 1. \end{cases}$$
We list some easily verified identities involving $u$, $w$, and $g$. 
\begin{itemize}
\item[(n)] $u \gange (g \otimes 1_n \otimes 1_n) \in D_2$,
\item[(o)] $w \gange (\varphi \circ \lambda)(g \otimes 1_n) 
= (\varphi \circ \lambda)(g \otimes 1_n)\gange w
  = \gamma(u \gange (g \otimes 1_n \otimes 1_n)) \in A$.
\item[(p)] $((1-g)\otimes 1_n \otimes 1_n)\gange u \gange (1 \otimes x
  \otimes y) \gange u =
  (1-g)\otimes y \otimes x$ for all $x,y
  \in M_n$.
\end{itemize} 
Put 
$$x_i = \lambda(g \otimes 1_n)^{1/2}\bar{v}^*s_iw 
\quad (i=1,2,\dots,n), \qquad
x_{n+1} = \lambda(g\otimes 1_n)^{1/2}.$$
The $x_i$'s satisfy the following relations:
\begin{itemize}
\item[(q)] $x_i^*x_{n+1} = w(\varphi \circ \lambda)(g \otimes 1_n)
\rho(e_{i1}^{(n)})\bar{v}$ for $i=1,2, \dots, n$.
\item[(r)] $x_i^*x_j \in A$ for all
  $i,j=1,2, \dots, n+1$. 
\item[(s)] $x_jx_j^* = \lambda(g \otimes 1_n) = x_{n+1}^*x_{n+1}$ for $j=1,2,
  \dots, n+1$.
\item[(t)] $x_i^*x_i \perp x_j^*x_j$ for $i \ne j$.  
\item[(u)] $\sum_{i=1}^n s_i^*\bar{v}\bar{v}^*s_i\varphi(c) =
  \varphi(c)$ for all $c \in \overline{v^*vAv^*v}$.
\item[(v)] $\sum_{j=1}^{n+1} x_j^*x_j = \lambda(g \otimes 1_n) + 
(\varphi \circ \lambda)(g \otimes 1_n)$. 
\end{itemize} 
Let us verify these identities. For $i=1,2, \dots, n$ we have $x_i^*x_{n+1}
= ws_i^*\bar{v}\lambda(g \otimes 1_n)$ (recall that $w=w^*$) and
\begin{eqnarray*}
s_i^*\bar{v}\lambda(g \otimes 1_n) &
\overset{\text{(j)}}{=} &
s_i^*(\varphi \circ \lambda)(g \otimes 1_n) \bar{v} \; \overset{\text{(k)}}{=} \; 
(\varphi \circ \lambda)(g \otimes 1_n)s_i^* \bar{v}\\ & \overset{\text{(a)}}{=} &
(\varphi \circ \lambda)(g \otimes 1_n)s_i^*s_1 \bar{v} \; = \; 
(\varphi \circ \lambda)(g \otimes 1_n)\rho(e_{i1}^{(n)}) \bar{v}. 
\end{eqnarray*}
This proves that (q) holds. As $s_i^*\bar{v}\lambda(g \otimes 1_n)$ belongs
to $A$ (by (g)), we can use (o) to conclude that $x_i^*x_{n+1}$
belongs to $A$. When $i,j=1,2, \dots, n$ we have $x_i^*x_j  =   
w^*s_i^*\bar{v}\lambda(g \otimes 1_n)\bar{v}^*s_jw$; and
$\bar{v}\lambda(g \otimes 1_n)\bar{v}^*$ belongs to $A$ by (g). Moreover,
$$s_i^*\bar{v}\lambda(g \otimes 1_n)\bar{v}^*s_j \overset{\text{(j),(k)}}{=} 
(\varphi \circ \lambda)(g \otimes 1_n)^{1/2}
s_i^*\bar{v}\bar{v}^*s_j(\varphi \circ \lambda)(g \otimes 1_n)^{1/2}.$$
We can now use (o) to see that $x_i^*x_j$ belongs to
$A$. To see that (s) holds, note that
$\bar{v}^*s_jww^*s_j^*\bar{v} = \bar{v}^*s_1^*s_1\bar{v} =
\bar{v}^*\bar{v}$ and use (f). Use \eqref{a} to see that $x_i^*x_i \perp
x_j^*x_j$ when $i \ne j$ and $i,j \le n$. We proceed to establish (u):
\begin{eqnarray*} 
\sum_{i=1}^n s_i^*\bar{v}\bar{v}^*s_i\varphi(c) & 
\overset{\text{\eqref{a}}}{=} & 
\sum_{i=1}^n s_i^*\bar{v}\bar{v}^*s_is_i^*\bar{v}c\bar{v}^*s_i 
\; \overset{\text{\eqref{a}}}{=} \;  
\sum_{i=1}^n s_i^*\bar{v}\bar{v}^*s_1^*s_1\bar{v}c\bar{v}^*s_i 
\; \overset{\text{(a)}}{=} \; 
\sum_{i=1}^n s_i^*\bar{v}c\bar{v}^*s_i = \varphi(c).
\end{eqnarray*}
Next,
\begin{eqnarray*} \sum_{i=1}^n x_i^*x_i & = & \sum_{i=1}^n
  w^*s_i^*\bar{v}\lambda(g \otimes 1_n)\bar{v}^*s_iw \;
  \overset{\text{(j),(k)}}{=} \;  
\sum_{i=1}^n w^*(\varphi \circ \lambda)(g \otimes 1_n)
s_i^*\bar{v}\bar{v}^*s_iw  \\ & \overset{\text{(u)}}{=} &
w^*(\varphi \circ \lambda)(g \otimes 1_n)w \; \overset{\text{(o)}}{=}\; 
(\varphi \circ \lambda)(g \otimes 1_n).
\end{eqnarray*}
>From this we see that (v) holds, and we also see that $x_i^*x_i \perp
x_{n+1}^*x_{n+1}$ (cf.\ (m)). 

It follows from (r), (s), (t) and
Proposition~\ref{order-zero-facts-2} that there is an order zero
c.p.c.\ map $$\alpha \colon M_{n+1} \to A, $$
given by 
$$
\alpha(e_{ij}^{(n+1)}) = x_i^*x_j \quad (i,j=1,2, \dots, n+1).$$ 
We list some properties of $\alpha$:
\begin{itemize}
\item[(w)] $\alpha(1_{n+1}) = \lambda(g \otimes 1_n) + (\varphi \circ
  \lambda)(g \otimes 1_n)$
\item[(x)] $[\lambda(g \otimes 1_n),\bar{\rho}(M_n)] =0$, \;
  $[\alpha(1_{n+1}),\bar{\rho}(M_n)] = 0$. 
\item[(y)] $1_A - \alpha(1_{n+1}) \in \overline{\rho(1_n)A\rho(1_n)}$.
\item[(z)] $(1_A - \alpha(1_{n+1}))\bar{\rho}(M_n) \subseteq A$.
\end{itemize}
(w) is just a reformulation of (v).
The first part of (x) follows from (e) when we note that $g$ is a function of 
$1 - \iota$, whence $\lambda(g \otimes 1_n)$ belongs to the \Cs{} 
generated by $\lambda((1-\iota) \otimes 1_n) = v^*v$, and that the
image of $\bar{\rho}$ is contained in the weak closure (in $A^{**}$)
of the \Cs{} generated by the $s_i$'s.  The second part of
(x) follows from the first part together with (w) and (l). 
As $g \ge 1 -\iota$ we
get $\lambda(g \otimes 1_n) \ge \lambda((1-\iota) \otimes 1_n) = v^*v$,
whence $1_A -\alpha(1_{n+1}) \le 1_A-v^*v = \rho(1_n)$. This proves (y). Finally,
(z) follows from (y) and the fact that $\rho(1_n)\bar{\rho}(x) = \rho(x)
\in A$. 

It follows from (x) and (z) above, together with
Proposition~\ref{order-zero-facts}~(iii), that 
$$\beta(x) = (1_A-\alpha(1_{n+1}))\bar{\rho}(x), \quad x \in M_n,$$
defines an order zero c.p.c.\ map from $M_n$ into $A$. Use (y) to see that
$(1_A - \alpha(1_{n+1}))\bar{\rho}(1_n) = 1_A - \alpha(1_{n+1})$, whence
$\alpha(1_{n+1}) + \beta(1_n) = 1_A$. To complete the proof we must show
that the images of $\alpha$ and $\beta$ commute. For brevity, put $h =
\lambda(g \otimes 1_n)$, recall that $\alpha(1_{n+1}) = h +
\varphi(h)$ and that $h \perp \varphi(h)$ (the latter by (m)). 
For $k,l,i=1,2, \dots, n$ we have:
\begin{eqnarray*} 
\beta(e_{kl}^{(n)})\alpha(e_{i,n+1}^{(n+1)}) &\overset{\text{(q)}}{=}&
(1-h-\varphi(h))
\bar{\rho}(e_{kl}^{(n)})w\varphi(h)
\rho(e_{i1}^{(n)})\bar{v} \\
&\overset{\text{(i),(l),(o)}}{=}& (1-\varphi(h))\varphi(h)
\bar{\rho}(e_{kl}^{(n)})w \bar{\rho}(e_{i1}^{(n)})\rho(1_n)\bar{v}\\
&=& \gamma\Big(\big((1-g)g \otimes 1_n \otimes 1_n\big)(1 \otimes 1_n
\otimes e_{kl}^{(n)})u(1 \otimes 1_n \otimes e_{i1}^{(n)}\Big)\rho(1_n)\bar{v} \\
&\overset{\text{(p)}}{=} 
& \gamma\Big(\big((1-g)g \otimes 1_n \otimes 1_n\big)u(1 \otimes 1_n
\otimes e_{i1}^{(n)})(1 \otimes e_{kl}^{(n)} \otimes 1_n\Big)\rho(1_n)\bar{v}\\
&=& (1-\varphi(h))\varphi(h)w\bar{\rho}(e_{i1}^{(n)}) 
(\varphi \circ \lambda)(1 \otimes e_{kl}^{(n)})\rho(1_n)\bar{v} \\
&\overset{\text{(k)}}{=}& \varphi(h) 
(1-\varphi(h))w\bar{\rho}(e_{i1}^{(n)})\rho(1_n) 
(\varphi \circ \lambda)(1 \otimes e_{kl}^{(n)})\bar{v} \\
&\overset{\text{(i),(j)}}{=}& \varphi(h)(1-\varphi(h))w\rho(e_{i1}^{(n)}) \bar{v} 
\lambda(1 \otimes e_{kl}^{(n)}) \\
&\overset{\text{(o), (k), (j)}}{=}& \varphi(h)w\rho(e_{i1}^{(n)}) \bar{v}
(1-h) \bar{\rho}(e_{kl}^{(n)}) \\ 
&\overset{\text{(m)}}{=}& \varphi(h)w\rho(e_{i1}^{(n)}) \bar{v}
(1-h - \varphi(h)) \bar{\rho}(e_{kl}^{(n)}) \\ 
&=& \alpha(e_{i,n+1}^{(n+1)})\beta(e_{kl}^{(n)})
\end{eqnarray*}
The image of $\alpha$ is contained in the \Cs{}, $E$, generated by
$\{\alpha(e_{i,n+1}) \mid i=1,2, \dots n\}$, which again, by the argument
above, is contained in
the commutant of the image of $\beta$. (To see this use that 
$\alpha(1_{n+1}) \in E$, that $E$ is contained in the hereditary sub-\Cs{}
of $A$
generated by  $\alpha(1_{n+1})$, and that $\alpha(x)\alpha(y) =
\alpha(1_{n+1})\alpha(xy)$ for all $x,y \in M_{n+1}$, cf.\
Proposition~\ref{order-zero-facts}~(ii).) 
It has now been verified that the images of $\alpha$ and
$\beta$ commute. 

(iv) $\Rightarrow$ (i). This follows from \cite[Lemma
4.2]{Ror:Z-absorbing}. 
\end{proof}

\begin{remark}
\label{Z-stable-weak-comparison}
Any stably finite unital $\mathcal{Z}$-stable $C^{*}$-algebra
satisfies conditions (i) through (iv) of Proposition~\ref{prop:embedding1}. 
\end{remark}

\noindent Quite surprisingly there is a very recent example of a
unital, simple infinite dimensional \Cs{} that does not admit a unital
embedding of the 
Jiang--Su algebra or for that matter of any dimension drop $C^{*}$-algebra  
$Z_{n,m}$ with $n,m \ge 2$ (see \cite{DHTW:no-Z-embedding}). 
This example is based on
Example~4.8 of \cite{HirRorWin:C_0(X)} of a unital $C(X)$-algebra
whose fibres absorb the Jiang--Su algebra, but which does not itself
absorb the Jiang--Su algebra. This \Cs{} has no finite dimensional
quotient, and one can quite easily see that one cannot unitally embed  
the Jiang--Su algebra or $Z_{n,m}$ (with $n,m \ge 2$) into this \Cs. 

In other words, simple infinite dimensional \Cs s can fail to have
the (very weak) divisibility property \ref{prop:embedding1}~(i). Nonetheless, prompted by the
equivalence of (i) and (iv) of the 
proposition above, one might ask the following:

\begin{question} \label{q:Zembed}
Does the Jiang--Su algebra $\cZ$ embed unitally into any unital \Cs{}
$A$ for which its Cuntz semigroup $W(A)$ has the following
divisibility property: For every natural number $n$ there exists $x
\in W(A)$ such that $nx \le \langle 1_A \rangle \le (n+1)x$?
\end{question}

\noindent The Jiang--Su algebra has the divisibity property of the
question above (cf.\ \cite[Lemma 4.2]{Ror:Z-absorbing}), and hence so does any
unital \Cs{} that admits a unital embedding of $\cZ$. 
The question above has an affirmative answer when $A$ is
strongly self-absorbing and of stable rank one:

\begin{proposition} \label{prop:Zembed}
Let $\mathcal{D}$ be a strongly self-absorbing \Cs{} of stable rank one
such that for each natural
number $n$ there is $x$ in the Cuntz semigroup $W(\mathcal{D})$ with $nx \le
\langle 1_\mathcal{D} \rangle \le (n+1)x$. Then the Jiang--Su algebra $\cZ$
embeds unitally into $\mathcal{D}$.
\end{proposition}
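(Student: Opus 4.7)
The plan is to leverage Proposition~\ref{prop:embedding1} to turn the divisibility hypothesis into unital \sh s $Z_{n,n+1} \to \mathcal{D}$ for every $n$, and then to use strong self-absorption to lift these to the central sequence algebra $\mathcal{D}_{\omega} \cap \mathcal{D}'$ (for a fixed free ultrafilter $\omega$ on $\mathbb{N}$), from which a known characterization of $\cZ$-stability will complete the argument.

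Concretely, the stable rank one assumption together with the divisibility condition on $W(\mathcal{D})$, fed through Proposition~\ref{prop:embedding1}, yields for each $n \ge 2$ a unital \sh{} $\psi_{n} \colon Z_{n,n+1} \to \mathcal{D}$. Since $\mathcal{D}$ is strongly self-absorbing, a standard consequence---flowing from the approximate innerness of the tensor flip on $\mathcal{D} \otimes \mathcal{D}$ (cf.\ \cite{TomsWin:Z})---is the existence of a unital \sh{} $\iota \colon \mathcal{D} \to \mathcal{D}_{\omega} \cap \mathcal{D}'$. Composing, one obtains unital \sh s $\iota \circ \psi_{n} \colon Z_{n,n+1} \to \mathcal{D}_{\omega} \cap \mathcal{D}'$ for every $n \ge 2$.

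The proof then concludes by invoking a characterization of $\cZ$-stability (due to Toms and the second named author, building on work of the first named author; see \cite{TomsWin:Z, Ror:Z-absorbing}): the existence of unital embeddings $Z_{n,n+1} \to \mathcal{D}_{\omega} \cap \mathcal{D}'$ for all sufficiently large $n$ forces $\mathcal{D} \otimes \cZ \cong \mathcal{D}$. Proposition~\ref{prop:inclusion}(i) then delivers the desired unital embedding $\cZ \hookrightarrow \mathcal{D}$.

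The principal technical obstacle lies in this last step, namely in passing from unital $Z_{n,n+1}$-embeddings inside $\mathcal{D}_{\omega} \cap \mathcal{D}'$ up to a full unital embedding of $\cZ$ itself. That passage is effected by a reindexation and approximate-intertwining argument that realises a Jiang--Su-type inductive system with compatible connecting maps within the central sequence algebra; the strong self-absorption of $\mathcal{D}$ is precisely what provides the ``room'' needed to arrange the requisite compatibilities between the maps $\iota \circ \psi_{n}$ and the canonical connecting maps of the Jiang--Su system.
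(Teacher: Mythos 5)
Your proposal is correct and follows essentially the same route as the paper: Proposition~\ref{prop:embedding1} gives unital maps $Z_{n,n+1}\to\mathcal{D}$, strong self-absorption pushes these into the central sequence algebra, and \cite[Proposition 2.2]{TomsWin:ASH} then yields $\mathcal{D}\otimes\cZ\cong\mathcal{D}$, hence the unital embedding of $\cZ$ via Proposition~\ref{prop:inclusion}. The only (immaterial) difference is your use of an ultrapower $\mathcal{D}_\omega$ where the paper works with the sequence algebra $\ell^\infty(\mathcal{D})/c_0(\mathcal{D})$.
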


\begin{proof} One can write $\cZ$ as an inductive limit of prime 
dimension-drop \Cs s of the form $Z_{n,n+1}$. By assumption and
  Proposition~\ref{prop:embedding1}, each $Z_{n,n+1}$ maps unitally
  into $\mathcal{D}$. As $\mathcal{D}$ is strongly self-absorbing,
  $\mathcal{D}$ embeds unitally 
  into $\mathcal{D}_\infty \cap \mathcal{D}'$, where $\mathcal{D}_\infty =
  \ell^\infty(\mathcal{D})/c_0(\mathcal{D})$, whence 
  $Z_{n,n+1}$ maps unitally into $\mathcal{D}_\infty \cap \mathcal{D}'$ for all $n$. 
  It now follows from \cite[Proposition 2.2]{TomsWin:ASH} that 
  $\mathcal{D} \cong \mathcal{D}
  \otimes \cZ$, and hence that $\cZ$ embeds unitally into $\mathcal{D}$.
\end{proof}

\noindent We are now ready to prove our main result of this section:

\begin{theorem} \label{thm:Z-char} Let $\mathcal{D}$ be a unital
  \Cs. Then $\mathcal{D} \cong \cZ$ if and only if
\begin{enumerate}
\item $\mathcal{D}$ is strongly self-absorbing,
\item the stable rank of $\mathcal{D}$ is one,
\item for all $n$ there is an element $x \in W(\mathcal{D})$ such that $nx \le
  \langle 1_\mathcal{D} \rangle \le (n+1)x$,
\item $\mathcal{D} \otimes B \cong B$ for all UHF-algebras $B$.
\end{enumerate}
\end{theorem}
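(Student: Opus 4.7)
The plan is to prove the theorem by establishing unital embeddings in both directions between $\mathcal{D}$ and $\cZ$, and then appealing to Proposition~\ref{prop:inclusion}(ii), which only requires us to know that both algebras are strongly self-absorbing and admit unital embeddings into one another.

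For the forward implication (that $\cZ$ satisfies (i)--(iv)), I would simply collect known facts: (i) is recorded in \cite{TomsWin:Z}; (ii) and (iv) are results of Jiang and Su \cite{JiaSu:Z}; and (iii) follows from \cite[Lemma~4.2]{Ror:Z-absorbing}, as already noted in the paragraph following Question~\ref{q:Zembed}.

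For the reverse implication, assume $\mathcal{D}$ satisfies (i)--(iv). To produce a unital embedding $\cZ \hookrightarrow \mathcal{D}$, I would invoke Proposition~\ref{prop:Zembed} directly, whose hypotheses are precisely (i), (ii), and (iii). For the reverse embedding $\mathcal{D} \hookrightarrow \cZ$, I would choose infinite supernatural numbers $p$ and $q$ that are relatively prime. Condition (iv) is exactly the hypothesis of Proposition~\ref{prop:ZpqD}, which then gives $\mathcal{D} \otimes Z_{p,q} \cong Z_{p,q}$; composing the unital embedding $d \mapsto d \otimes 1_{Z_{p,q}}$ with this isomorphism yields a unital embedding $\mathcal{D} \hookrightarrow Z_{p,q}$. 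Since $p$ and $q$ are relatively prime supernatural numbers, Proposition~\ref{prop:Zpq-embed} supplies a unital embedding $Z_{p,q} \hookrightarrow \cZ$. Composing gives $\mathcal{D} \hookrightarrow \cZ$ unitally, and Proposition~\ref{prop:inclusion}(ii) then concludes $\mathcal{D} \cong \cZ$.

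The theorem itself is essentially an assembly of the machinery already in place; there is no remaining technical obstacle in its proof. The genuine work has been done earlier, concentrated in Proposition~\ref{prop:embedding1} (which drives Proposition~\ref{prop:Zembed} and thus provides the $\cZ \hookrightarrow \mathcal{D}$ direction), and in the transfer arguments of \cite{HirRorWin:C_0(X)} used to prove Proposition~\ref{prop:ZpqD} (which provides the $\mathcal{D} \hookrightarrow \cZ$ direction). One should only be careful to note that both (ii) and (iii) are used exclusively to secure the embedding $\cZ \hookrightarrow \mathcal{D}$, while (iv) alone handles the opposite embedding.
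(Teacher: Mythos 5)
Your proposal is correct and follows essentially the same route as the paper: both directions of embedding are obtained from Propositions~\ref{prop:Zembed}, \ref{prop:ZpqD} and \ref{prop:Zpq-embed}, and the isomorphism is concluded via Proposition~\ref{prop:inclusion}(ii) (the paper simply fixes $p=2^\infty$, $q=3^\infty$ where you take arbitrary relatively prime infinite supernatural numbers). Your closing remark on which hypotheses drive which embedding is accurate, up to the observation that condition (i) is also needed as a hypothesis of Proposition~\ref{prop:ZpqD}.
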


\begin{proof} It is well-known that $\cZ$ satisfies (i)--(iv). To prove
  the ``if'' part it suffices to show that $\mathcal{D}$ embeds unitally into $\cZ$
  and that $\cZ$ embeds unitally into $\mathcal{D}$, cf.\
  Proposition~\ref{prop:inclusion}. 
 
It follows from Proposition~\ref{prop:ZpqD} that
$\mathcal{D} \otimes Z_{2^\infty,3^\infty} \cong Z_{2^\infty,3^\infty}$. 
Hence $\mathcal{D}$ embeds unitally into
$Z_{2^\infty,3^\infty}$ which again embeds unitally into $\cZ$ by
Proposition~\ref{prop:Zpq-embed}. 

That $\cZ$ embeds into $\mathcal{D}$ follows from
Proposition~\ref{prop:Zembed}.
\end{proof}

\section{Strongly self-absorbing \Cs s with almost unperforated Cuntz semigroup}
\label{sec:abstract-char2}

In this section, we rephrase  Theorem~\ref{thm:Z-char} in terms of an algebraic 
condition on the Cuntz semigroup of a strongly self-absorbing $C^{*}$-algebra. 
Along the way, we show that a strongly self-absorbing $C^{*}$-algebra has almost 
unperforated Cuntz semigroup if and only if it absorbs the Jiang--Su algebra.

\begin{remark}[Dimension functions] 
\label{ldf} 
A \emph{dimension function} on a \Cs{} $A$ is a function $d \colon
M_\infty(A)^+ \to \R^+$ which satisfies $d(a \oplus b) = d(a) + d(b)$,
and $d(a) \le d(b)$ if $a \precsim b$ for all $a,b \in M_\infty(A)^+$.
It is \emph{lower semicontinuous} if, for every monotone increasing
sequence $(a_n)$ in $M_\infty(A)^+$ with $a_{n} \to a$ for some $a \in A$,  
one has $d(a_n) \to d(a)$. 

If $\tau$ is a (positive) trace on $A$, then
$$d_\tau(a) = \lim_{n \to \infty} \tau(a^{1/n}) = \lim_{\ep \to 0+}
\tau(f_\ep(a)), \qquad a \in M_\infty(A)^+,$$
defines a dimension function on $A$ (where $f_\ep$ is as defined in
Notation~\ref{g-eta-notation}, and when $\tau$ is extended in the
canonical way to $M_\infty(A)$). Every lower semicontinuous
dimension function on an exact \Cs{} arises in this way. 

Every dimension function $d$ on $A$ factors through the Cuntz
semigroup, i.e., it gives rise to an additive order preserving mapping $\tilde{d} \colon
W(A) \to \R^+$ given by $\tilde{d}(\langle 
a \rangle) = d(a)$ for $a \in M_\infty(A)^+$. The functional
$\tilde{d}$ is called a state (or a dimension function) on $W(A)$. If
there is no risk of confusion, then we use the same symbol to denote
the dimension function on $A$ and the corresponding state on $W(A)$. 

It is well-known that a stably finite strongly self-absorbing $C^{*}$-algebra 
$\mathcal{D}$ has precisely one trace (which we shall usually denote by $\tau$); 
this determines a unique lower semicontinuous dimension 
function (also denoted by $d_{\tau}$ in the sequel). When identifying 
$\mathcal{D}$ with $\mathcal{D} \otimes \mathcal{D}$ one has
\begin{equation}
\label{cc}
d_{\tau}(a \otimes b) = d_{\tau}(a) \cdot d_{\tau}(b)
\end{equation}
for all $a,b \in \mathcal{D}^{+}$.
\end{remark} 
 
\begin{remark}[Almost unperforation and strict comparison]
\label{aup}
The Cuntz semigroup $W(A)$ of a \Cs{} $A$ is said to be \emph{almost
unperforated}, cf.\ \cite{Ror:Z-absorbing}, if for all $x,y \in W(A)$ and for all
natural numbers $n$ one has $(n+1)x \le ny \Rightarrow x \le y$. 

If $A$ is simple and unital, then $W(A)$ is almost unperforated if and only
if $A$ has
\emph{strict comparison}, i.e., whenever $x,y \in W(A)$ are such that $d(x) < d(y)$
for all dimension functions $d$ on $A$ (that can be taken to be
normalized: $d(\langle 1_A \rangle) = 1$), then $x \le y$ (see
\cite[Proposition~3.2]{Ror:Z-absorbing}).

If $A$ is simple, exact and unital, then $W(A)$ is almost unperforated
if and only if $A$ has \emph{strict comparison given by traces}: For all $x,y
\in W(A)$ one has that 
$x \le y$ if $d_\tau(x) < d_\tau(y)$ for all tracial
states $\tau$ on $A$, (see \cite[Corollary 4.6]{Ror:Z-absorbing}). 
\end{remark}

\begin{lemma}
\label{tau-halving}
Let $A \neq \mathbb{C}$ be a unital $C^{*}$-algebra with a faithful tracial state 
$\tau$. Then there are $0< \lambda < 1$ and positive
elements $e$ and $f$ in $A$ such that $e \perp f$ and
\[
d_{\tau}(e) = \lambda, \qquad  d_{\tau}(f) = 1-\lambda.
\]
\end{lemma}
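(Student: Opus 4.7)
The plan is to produce $e$ and $f$ by functional calculus applied to a single non-scalar positive element. Since $A\neq\C$ and $A$ is the linear span of its self-adjoint part, there is a self-adjoint $h\in A$ that is not a scalar multiple of $1_A$; rescaling and shifting yields a positive contraction $a\in A$ whose spectrum contains two distinct points $\alpha<\beta$ in $[0,1]$. For each $t\in(\alpha,\beta)$ I would then look at the orthogonal pair
\[
e_t=(a-t)_+,\qquad f_t=(t-a)_+,
\]
which are positive elements of $A$ with $e_tf_t=0$, so that $e_t\perp f_t$.

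The key computation is to show that $d_\tau(e_t)+d_\tau(f_t)=1$ for a suitable choice of $t$. Extend $\tau$ to its canonical normal tracial state $\tilde\tau$ on $A^{**}$, and for a Borel set $S\subseteq[0,1]$ write $\chi_S(a)$ for the corresponding spectral projection of $a$ in $A^{**}$. The map $\nu(S):=\tilde\tau(\chi_S(a))$ is a Borel probability measure on $[0,1]$. Using that $e_t^{1/n}$ increases $\sigma$-strongly to the support projection $\chi_{(t,1]}(a)$ and that $\tilde\tau$ is normal, I would identify
\[
d_\tau(e_t)=\tilde\tau(\chi_{(t,1]}(a))=\nu((t,1]),\qquad d_\tau(f_t)=\nu([0,t)).
\]
Since $\nu$ is finite it has at most countably many atoms, so almost every $t\in(\alpha,\beta)$ satisfies $\nu(\{t\})=0$; pick such a $t$. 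Then
\[
d_\tau(e_t)+d_\tau(f_t)=\nu((t,1])+\nu([0,t))=1-\nu(\{t\})=1.
\]

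Setting $\lambda:=d_\tau(e_t)$, it only remains to see that $0<\lambda<1$. Both $(a-t)_+$ and $(t-a)_+$ are nonzero because $t$ lies strictly between two points of $\spec(a)$, so by faithfulness of $\tau$ both elements have strictly positive trace, and hence strictly positive dimension function values. Thus $\lambda>0$ and $1-\lambda=d_\tau(f_t)>0$.

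The main technical point, and the step I expect to be the obstacle, is the identification of $d_\tau(e_t)$ with the spectral measure $\nu((t,1])$. Everything else is bookkeeping: once one has that $d_\tau(\,\cdot\,)$ applied to $(a-t)_+$ returns the $\nu$-measure of an open half-interval, the statement reduces to choosing $t$ outside the countable set of atoms of $\nu$. An alternative, entirely $C^*$-algebraic formulation avoiding the bidual is to use the paper's function $f_\varepsilon$ and write $d_\tau(e_t)=\lim_{\varepsilon\to 0+}\tau(f_\varepsilon((a-t)_+))$; one can then phrase the atom-avoidance step as choosing $t$ at which the two monotone functions $s\mapsto d_\tau((a-s)_+)$ and $s\mapsto d_\tau((s-a)_+)$ (which are right- and left-continuous, respectively) are both continuous, which occurs off a countable set.
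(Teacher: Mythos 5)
Your proof is correct and follows essentially the same route as the paper: cut a non-scalar positive element by functional calculus at a point $t$ of its spectrum, identify $d_\tau$ of the two pieces with the measure that $\tau$ induces on the spectrum, and choose $t$ to avoid the countably many atoms. The only (minor) difference is that the paper first disposes of the case of a disconnected spectrum by producing a nontrivial projection and then assumes $\sigma(d)=[0,1]$ with $\mu$ of full support, whereas your choice of $t$ strictly between two spectral points handles both cases uniformly.
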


\begin{proof}
Choose a positive normalized element $d \in A$ such that $\{0,1\}
\subseteq \sigma(d)$;  
such an element exists in any  $C^{*}$-algebra of vector space dimension strictly 
larger than 1. If $\sigma(d) \neq [0,1]$, then $A$ contains a
nontrivial projection $p$, and we can take $\lambda = \tau(p)$, $e =
p$ and $f= 1-p$. Suppose now that $\sigma(d) = [0,1]$.  
The trace $\tau$ induces a probability measure $\mu$ on $\sigma(d) = [0,1]$
which is non-zero on any non-empty open subset of $[0,1]$ (because
$\tau$ is assumed to be faithful). Take $t$ in the
open interval $(0,1)$ such that $\mu(\{t\}) = 0$. Then $\lambda =
\mu([0,t])$, $e = (d-t)_-$, and $f= (d-t)_+$ are as desired.
\end{proof}

\noindent
In the lemmas below it is established that the Cuntz
semigroup of a strongly self-absorbing \Cs{} has a rather strong
divisibility property. 

\begin{lemma} \label{halving} 
Let $\mathcal{D}$ be a
  strongly self-absorbing \Cs. There are positive elements $b,c \in \D$ 
such that $\langle b \rangle = \langle  c \rangle$, $b\perp c$, and
  $d_\tau(b)=d_\tau(c) = 1/2$.  
\end{lemma}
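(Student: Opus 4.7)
The plan is to use Lemma~\ref{tau-halving} together with the self-absorption $\mathcal{D}\cong\mathcal{D}\otimes\mathcal{D}$ to produce $b$ and $c$ as a symmetric pair in the tensor product. First I would apply Lemma~\ref{tau-halving} to obtain orthogonal positive $e,f\in\mathcal{D}$ with $d_\tau(e)=\lambda$, $d_\tau(f)=1-\lambda$ for some $\lambda\in(0,1)$. Identifying $\mathcal{D}$ with $\mathcal{D}\otimes\mathcal{D}$, I would set
\[
b := e\otimes e + f\otimes f,\qquad c := e\otimes f + f\otimes e.
\]
Orthogonality $b\perp c$ is a direct calculation using $ef=0$, and the multiplicativity of $d_\tau$ on tensor products (equation \eqref{cc}) gives $d_\tau(b)=\lambda^2+(1-\lambda)^2$ and $d_\tau(c)=2\lambda(1-\lambda)$, each equal to $1/2$ precisely when $\lambda=1/2$. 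Moreover $b+c=(e+f)\otimes(e+f)$, which has full dimension, confirming $d_\tau(b)+d_\tau(c)=1$ for all $\lambda$.

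The first main obstacle is therefore to upgrade Lemma~\ref{tau-halving} so that $\lambda=1/2$ can be chosen exactly. For this I would exploit the self-absorption iteratively: given $d\in\mathcal{D}$ with $\sigma(d)=[0,1]$ as in the proof of Lemma~\ref{tau-halving}, the element $\tfrac{1}{2}(d\otimes 1+1\otimes d)\in\mathcal{D}\otimes\mathcal{D}\cong\mathcal{D}$ has spectral measure whose atomic part has total mass the square of that of $d$. Iterating drives the atomic mass to zero, and after finitely many steps one obtains $d'\in\mathcal{D}$ whose cumulative distribution function is continuous at a point $t$ with $\mu_{d'}([0,t])=1/2$; applying Lemma~\ref{tau-halving} to $d'$ cut at this point yields orthogonal $e,f$ with $d_\tau(e)=d_\tau(f)=1/2$.

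The second (and hardest) obstacle is the Cuntz equivalence $\langle b\rangle=\langle c\rangle$: note that the flip automorphism $x\otimes y\mapsto y\otimes x$ fixes both $b$ and $c$ and so does not directly help. Instead I would construct a self-adjoint unitary $u\in\mathcal{D}$ swapping $e$ and $f$ in the sense that $ueu^*=f$ and $ufu^*=e$; then $u\otimes 1\in\mathcal{D}\otimes\mathcal{D}$ conjugates $b$ into $c$, giving the desired equivalence. To produce $u$, one first uses the equality $d_\tau(e)=d_\tau(f)$ to obtain $x\in\mathcal{D}$ with $x^*x=e$ and $xx^*=f$ (so $e\sim f$ in the paper's sense), then combines the polar decomposition of $x$ in $\mathcal{D}^{**}$ with the complementary positive element $(1-e-f)^{1/2}\in\mathcal{D}$ to form a symmetry. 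The delicate point is ensuring $u$ actually lies in $\mathcal{D}$ rather than only in $\mathcal{D}^{**}$, which requires a careful approximation argument since $e$ and $f$ are not projections. Both this step and the exact choice $\lambda=1/2$ are where the strong self-absorption is used crucially.
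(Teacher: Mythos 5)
There are genuine gaps in both of the steps you flag as the main obstacles, and the paper's proof is designed precisely to avoid having to face either of them. First, your route to $\lambda=1/2$ does not work: the total atomic mass of the spectral distribution of $\tfrac12(d\otimes 1+1\otimes d)$ is the \emph{square} of that of $d$, so if the measure is purely atomic (which is perfectly possible for a faithful trace --- take atoms on a dense subset of $[0,1]$) the atomic mass stays equal to $1$ under iteration, and even when the atomic mass becomes small after finitely many steps, a single atom whose jump straddles the value $1/2$ can persist, in which case no continuity point $t$ with $\mu([0,t])=1/2$ exists. Second, and more seriously, you deduce from $d_\tau(e)=d_\tau(f)$ that there is $x\in\D$ with $x^*x=e$, $xx^*=f$. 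Equality of dimension-function values does not imply equivalence (or even Cuntz equivalence) of positive elements unless one already has strong comparison properties for $W(\D)$ --- and establishing such comparison for strongly self-absorbing algebras is exactly what this lemma is a step towards (Proposition~\ref{comparison-Z-stable}), so the argument is circular. Even granting $e\sim f$, the symmetry $u$ you build lives a priori only in $\D^{**}$ (the partial isometry in the polar decomposition and the support projections need not lie in $\D$ since $e,f$ are not projections), and ``a careful approximation argument'' is not supplied; there is no reason such a unitary exists in $\D$.

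The paper sidesteps both issues. It never arranges $\lambda=1/2$: writing $\D=(\D_0)^{\otimes\infty}$, it takes the \emph{asymmetric} pair $b_0=e\otimes f\otimes 1\otimes\cdots$, $c_0=f\otimes e\otimes 1\otimes\cdots$ (rather than your flip-invariant $b,c$), so that $\langle b_0\rangle=\langle c_0\rangle$ follows for free from the approximate innerness of the flip on a strongly self-absorbing algebra --- no unitary exchanging $e$ and $f$ is needed, only a sequence $(u_n)$ with $u_n^*b_0u_n\to c_0$, which gives Cuntz equivalence. Each such pair only has $d_\tau(b_0)=\lambda(1-\lambda)=\bar\lambda$, but setting $d=e\otimes e+f\otimes f$ and $b_n=d^{\otimes n}\otimes e\otimes f\otimes 1\otimes\cdots$ (similarly $c_n$) produces pairwise orthogonal elements with $d_\tau(b_n)=(1-2\bar\lambda)^n\bar\lambda$, and the geometric series sums to exactly $1/2$ for \emph{every} $\lambda\in(0,1)$; the desired $b$ and $c$ are then the norm-convergent sums $\sum\frac{1}{n+1}b_n$ and $\sum\frac{1}{n+1}c_n$. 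If you want to salvage your approach, you would need to replace both of your key steps by arguments of this kind; as written, neither step can be completed.
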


\begin{proof}
We can identify $\D$ with $(\D_0)^{\otimes \infty}$, where $\D_0$ is
(isomorphic to) $\D$.
By Lemma~\ref{tau-halving}, there are $0<\lambda< 1$ and  positive
elements $e,f$ in $\mathcal{D}$ (that we can assume to have norm equal
to 1) such that $e \perp f$, $d_{\tau}(e) =
\lambda$, and $d_{\tau}(f) = 1-\lambda$. Set
$\bar{\lambda}= \lambda  (1-\lambda) >0$,
and set
\[
b_{0} = e \otimes f \otimes 1_{\mathcal{D}_0} 
\otimes \cdots \in \D, \qquad c_{0}= f \otimes e \otimes 1_{\mathcal{D}_0} 
\otimes \cdots \in \D.
\]
Then $b_{0} \perp c_{0}$, and $\langle b_{0} \rangle = \langle c_{0} \rangle $ 
because $\D$ is strongly self-absorbing (which implies that there is a sequence $(u_{n})$ 
of unitaries in $\mathcal{D}$ such that $u_{n}^{*}b_{0}u_{n} \to
c_{0}$). Moreover, by \eqref{cc}, we have  
$d_{\tau}(b_{0}) = d_{\tau}(c_{0})= \bar{\lambda}$.

Set $d = e \otimes e + f \otimes f \in \D_0 \otimes \D_0$, and for
each natural number $n$ set 
$$b_n = d \otimes \cdots \otimes 
d \otimes e \otimes f \otimes 1_{\D_0} \otimes \cdots \in \D, \qquad 
c_n = d \otimes \cdots \otimes d \otimes f \otimes e \otimes 1_{\D_0}
\otimes \cdots \in D,$$
where $d$ appears $n$ times. Then, as above, we have that $\langle b_n \rangle =
\langle c_n \rangle$; and the elements $b_0,b_1,b_2, \dots$, $c_0,c_1,c_2, \dots$ are
pairwise orthogonal. Moreover, by  \eqref{cc}, we have $d_\tau(d) =
1-2\bar{\lambda}$ and hence
$$d_\tau(b_n) = d_\tau(c_n) = (1-2\bar{\lambda})^n \bar{\lambda}.$$
It follows that
$$\sum_{n=0}^\infty d_\tau(b_n) = \sum_{n=0}^\infty d_\tau(c_n) =
1/2,$$
whence the norm-convergent sums
$$b:= \sum_{n=0}^\infty \frac{1}{n+1} b_n, \qquad c:=
\sum_{n=0}^\infty \frac{1}{n+1} c_n,$$ 
define elements $b$ and $c$ in $\D$ with the desired properties.
\end{proof}

\begin{lemma} \label{approximation}
Let $A$ be a \Cs{} which contains an increasing sequence
$(A_n)$ of sub-\Cs s whose union is dense in $A$. Let $x \in
W(A)$ and $\ep>0$ be given. Let $\tau$ be a trace on $A$.
Then there exist natural numbers $k$ and $r$ and
a positive element $a \in M_r(A_k) \subseteq M_r(A)$ such that
$\langle a \rangle \le x$ and $d_\tau(\langle a \rangle) \ge
d_\tau(x)-\ep$. 
\end{lemma}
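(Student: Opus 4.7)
My plan is to exploit the fact that $d_\tau$ is continuous on the tower of cuts $\{(b-\eta)_+\}_{\eta>0}$, and to match a nearby element of $M_r(A_k)$ to one such cut using the standard norm-to-Cuntz-comparison lemma of R\o rdam.

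Write $x = \langle b \rangle$ for some $b \in M_r(A)^+$. Since $d_\tau((b-\eta)_+) = d_\tau(\chi_{(\eta,\infty)}(b)) \to d_\tau(b)$ as $\eta \to 0+$, I can choose $\delta>0$ small enough that
\[
d_\tau\bigl((b-2\delta)_+\bigr) \ge d_\tau(x) - \ep.
\]
The union $\bigcup_n M_r(A_n)$ is dense in $M_r(A)$, so I can then find $k$ and a self-adjoint element $c_0 \in M_r(A_k)$ with $\|c_0 - b\| < \delta/2$. Replacing $c_0$ by its positive part $c = (c_0)_+$ (which still lies in $M_r(A_k)$), and noting that $\|(c_0)_- \| \le \|b-c_0\|$ since $b \ge 0$, I get $c \in M_r(A_k)^+$ with $\|c - b\| < \delta$. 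Set $a = (c-\delta)_+ \in M_r(A_k)^+$.

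To check $\langle a \rangle \le x$, I invoke the standard fact (see e.g.\ \cite{Ror:UHFII}) that $\|c-b\|<\delta$ implies $(c-\delta)_+ \precsim b$; hence $\langle a \rangle \le \langle b \rangle = x$. For the trace estimate, observe that the function $t \mapsto (t-\delta)_+$ is $1$-Lipschitz, so by continuous functional calculus
\[
\bigl\| (c-\delta)_+ - (b-\delta)_+ \bigr\| \le \|c - b\| < \delta.
\]
Applying the same R\o rdam lemma to the positive elements $(c-\delta)_+$ and $(b-\delta)_+$ gives
\[
(b - 2\delta)_+ \;=\; \bigl((b-\delta)_+ - \delta\bigr)_+ \;\precsim\; (c-\delta)_+ \;=\; a,
\]
so $d_\tau(a) \ge d_\tau((b-2\delta)_+) \ge d_\tau(x) - \ep$, as required.

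I do not expect a genuine obstacle here; the statement is essentially a continuity-of-$d_\tau$ argument combined with the well-known perturbation lemma for Cuntz subequivalence. The only point that needs a touch of care is producing a \emph{positive} approximant inside $M_r(A_k)$ (handled above by taking the positive part of a self-adjoint approximant and accounting for the extra factor of two), and choosing the tolerance $\delta$ so that the double cut $(b-2\delta)_+$ still captures almost all of $d_\tau(b)$.
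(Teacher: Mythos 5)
Your argument is correct and follows essentially the same route as the paper's: represent $x$ by $b\in M_r(A)^+$, use lower semicontinuity of $d_\tau$ to fix a cut level, approximate $b$ from $\bigcup_n M_r(A_k)$, and apply R\o rdam's perturbation lemma in both directions to get $\langle a\rangle\le x$ and $d_\tau(a)\ge d_\tau(x)-\ep$. The only cosmetic difference is that you pass through the double cut $(b-2\delta)_+$ (and spell out how to make the approximant positive), whereas the paper cuts $a_0$ at $\delta/2$ so that $\|a-b\|<\delta$ gives $(b-\delta)_+\precsim a$ directly.
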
 

\begin{proof} The element $x$ is represented by a positive element $b$
  in a matrix algebra $M_r(A)$ over $A$. Since $d_\tau$ is lower
  semicontinuous there is $\delta   >0$ such that
  $d_\tau((b-\delta)_+) \ge d_\tau(b) - \ep$. Find $k$ and a
  positive element $a_0$ in $M_r(A_k)$ such that $\|a_0-b\| <
  \delta/2$. Put $a = (a_0-\delta/2)_+ \in M_r(A_k)$. Then $\langle a
  \rangle \le \langle b \rangle = x$ (by \cite[Section 2]{Ror:UHFII}). Moreover, $\|a-b\| 
  < \delta$, so again by \cite[Section 2]{Ror:UHFII}, we have $\langle a \rangle \ge
  \langle (b-\delta)_+ \rangle$, which implies that $d_\tau(\langle a
  \rangle) \ge d_\tau(\langle (b-\delta)_+ \rangle) \ge d_\tau(x) -
  \ep$. 
\end{proof}

\begin{lemma} \label{splitting-into-pieces} Let $\mathcal{D}$ be a
  strongly self-absorbing \Cs. Let $x \in
  W(\mathcal{D})$ and $0 \neq k \in \N$ be given. Then, for each $\ep >0$, 
  there is $y \in W(\mathcal{D})$ such that $ky \le x$ and $kd_\tau(y) \ge
  d_\tau(x) - \ep$.  
\end{lemma}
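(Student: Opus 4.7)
The plan is to reduce the lemma to constructing, in a fresh copy of $\mathcal{D}$, $k$ pairwise orthogonal and mutually Cuntz-equivalent positive elements whose common trace is as close to $1/k$ as desired. Given such elements $p_{1},\ldots,p_{k}$, I would pick $a\in M_{r}(\mathcal{D})^{+}$ with $\langle a\rangle=x$ and set $y=\langle a\otimes p_{1}\rangle$, interpreting the tensor product via the identification $\mathcal{D}\cong\mathcal{D}\otimes\mathcal{D}'$ coming from strong self-absorption. The inequality $ky\le x$ and the trace estimate then flow from the pairwise orthogonality of the $a\otimes p_{i}$, from $p_{1}+\cdots+p_{k}\precsim 1_{\mathcal{D}'}$, and from the multiplicativity of $d_{\tau}$ on tensor products~\eqref{cc}.

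To produce the splitting I would iterate Lemma~\ref{halving}. Writing $\mathcal{D}\cong\mathcal{D}_{1}\otimes\cdots\otimes\mathcal{D}_{n}$ and choosing $b_{j}\perp c_{j}$ in $\mathcal{D}_{j}$ with $\langle b_{j}\rangle=\langle c_{j}\rangle$ and $d_{\tau}(b_{j})=d_{\tau}(c_{j})=1/2$, I form the $2^{n}$ elementary tensors $z_{1}\otimes\cdots\otimes z_{n}$ with $z_{j}\in\{b_{j},c_{j}\}$. These are pairwise orthogonal, each has trace $2^{-n}$ by~\eqref{cc}, and all share a common Cuntz class because tensoring preserves Cuntz equivalence in each factor. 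Writing $2^{n}=km+s$ with $0\le s<k$ and summing $k$ disjoint blocks of $m$ of these tensors produces pairwise orthogonal, mutually Cuntz-equivalent $p_{1},\ldots,p_{k}\in\mathcal{D}$ with $d_{\tau}(p_{i})=m/2^{n}$.

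With the $p_{i}$ in hand,
$$ky \;=\; \langle a\otimes(p_{1}+\cdots+p_{k})\rangle \;\le\; \langle a\otimes 1_{\mathcal{D}'}\rangle \;=\; \langle a\rangle \;=\; x,$$
using the approximate unitary equivalence between $d\mapsto d\otimes 1_{\mathcal{D}'}$ and an isomorphism $\mathcal{D}\to\mathcal{D}\otimes\mathcal{D}'$ to identify the final two classes. By~\eqref{cc},
$$kd_{\tau}(y)\;=\;d_{\tau}(x)\cdot\frac{km}{2^{n}}\;\ge\; d_{\tau}(x)\left(1-\frac{k}{2^{n}}\right)\;\ge\; d_{\tau}(x)-\frac{rk}{2^{n}},$$
which exceeds $d_{\tau}(x)-\ep$ as soon as $2^{n}\ge rk/\ep$; the bound $d_{\tau}(x)\le r$ holds because $a\in M_{r}(\mathcal{D})$.

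The main technical subtlety I foresee lies in the Cuntz-theoretic bookkeeping: verifying that pairwise orthogonal and mutually Cuntz-equivalent pieces assemble into elements of predictable Cuntz class, and that the class $\langle a\otimes 1_{\mathcal{D}'}\rangle$ really equals $\langle a\rangle$. Both points are standard, but hinge on how the Cuntz semigroup interacts with orthogonal sums and with approximately unitarily equivalent maps in a strongly self-absorbing $C^{*}$-algebra; everything else in the argument reduces to routine bookkeeping with traces.
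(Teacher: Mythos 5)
Your argument is correct and follows essentially the same route as the paper: both iterate Lemma~\ref{halving} to produce $2^{n}$ pairwise orthogonal, mutually Cuntz-equivalent pieces of trace $2^{-n}$ and then approximate $1/k$ dyadically, the only organizational difference being that you group the pieces into $k$ blocks of $m$ at once, where the paper first proves the case $k=2^{j}$ and then sets $y=my_{0}$. The one substantive deviation is that you replace the paper's use of Lemma~\ref{approximation} (which reduces to representatives of $x$ living in a finite tensor stage, at the cost of an $\ep$) by the observation that $\langle a\otimes 1_{\mathcal{D}'}\rangle=\langle a\rangle$ via approximate unitary equivalence of $d\mapsto d\otimes 1$ with an isomorphism; this is legitimate and lets the $\ep$ enter only through the dyadic defect $1-km/2^{n}$.
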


\begin{proof}
Let us first prove the lemma for $k=2$ 
(for $k=1$, there is nothing to show). For each natural number $r$,
identify $M_{r}(\mathcal{D})$ with $M_{r}(\mathcal{D}_0) \otimes 
(\mathcal{D}_0)^{\otimes \infty}$, where $\D_0$ is (isomorphic to)
$\D$. By Lemma~\ref{approximation} it suffices to consider the case 
where $x = \langle d \rangle$ for some positive element
$$d \in M_r(\D_0) \otimes (\D_0)^{\otimes k} \otimes 1_{\D_0} \otimes
\cdots,$$
for suitable natural numbers $k$ and $r$, that is $d = d_0 \otimes 1_{\D_0} \otimes
\cdots$ for some $d_0 \in M_r(\D_0) \otimes (\D_0)^{\otimes k}$.
Let $b$ and $c$ be as in Lemma~\ref{halving},
and set
$$b' = d_0 \otimes b \in M_r(\D), \qquad c' = d_0 \otimes c \in
M_r(\D),$$
where we have identified $M_r(\D)$ with $M_r(\D_0) \otimes (\D_0)^{\otimes k}
\otimes (\D_0)^{\otimes \infty}$. Then $b'$ and $c'$ are orthogonal, 
belong to the hereditary sub-\Cs{} 
of $M_r(\D)$ generated by $d$, and satisfy 
$\langle b' \rangle = \langle c' \rangle$. Moreover, by \eqref{cc}, 
$$d_\tau(b') = d_\tau(c') = d_\tau(d)/2.$$
Set $y= \langle b' \rangle$.
Then $2y = \langle b' + c' \rangle \le
\langle d \rangle = x$, and $2d_\tau(y) = 2d_\tau(b') =
d_\tau(x)$. (Note that in this case, i.e., for $k=2$ and for $x=
\langle d \rangle$ of the special form considered above, we prove the
lemma with $\ep=0$.)

Next, a repeated application of the case $k=2$ yields that the lemma
holds for $k=2^{j}$, for any $j \in \mathbb{N}$.

To derive the lemma for an arbitrary natural number $k$, choose 
$m,j \in \mathbb{N}$ such that 
\begin{equation*}
\label{gg}
\frac{1}{k} - \frac{\varepsilon}{2k d_{\tau}(x)} \le \frac{m}{2^{j}} 
\le \frac{1}{k}.
\end{equation*}
Then 
$$2^j\big(1-\frac{\ep}{2d_\tau(x)}\big) \le mk \le 2^j.$$ 
Choose $\ep_0>0$ such that 
$$(d_\tau(x) - \ep_0)\big(1-\frac{\ep}{2d_\tau(x)}\big)   \ge d_\tau(x) - \ep.$$
Now apply the lemma with $2^{j}$ and $\ep_0$ in the place of $k$ and $\varepsilon$ 
to obtain $y_0 \in W(\D)$ with $2^j y_0 \le x$ and $2^jd_\tau(y_0) \ge
d_\tau(x) - \ep_0$. Put $y=my_0$. Then $ky = kmy_0 \le 2^jy_0 \le x$ and
$$kd_\tau(y) = mkd_\tau(y_0) \ge
2^j\big(1-\frac{\ep}{2d_\tau(x)}\big)d_\tau(y_0) \ge
\big(1-\frac{\ep}{2d_\tau(x)}\big)(d_\tau(x)-\ep_0) \ge d_\tau(x)-\ep.$$
\end{proof}

\begin{proposition}
\label{comparison-Z-stable}
Let $\mathcal{D}$ be a strongly self-absorbing \Cs. Then
$W(\mathcal{D})$ is almost unperforated if and only if $\mathcal{D}$ 
absorbs the Jiang--Su algebra tensorially.  
\end{proposition}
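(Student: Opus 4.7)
The plan is to handle the two implications separately. The ``only if'' direction is a theorem of the first-named author (\cite[Theorem 4.5]{Ror:Z-absorbing}): if $\mathcal{D}$ is $\mathcal{Z}$-absorbing, then $W(\mathcal{D}) \cong W(\mathcal{D} \otimes \mathcal{Z})$ is automatically almost unperforated, so this direction is immediate.

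For the converse, I would use the structure theory of strongly self-absorbing $C^{*}$-algebras: $\mathcal{D}$ is simple, nuclear and unital, and is either purely infinite --- whence $\mathcal{D} \in \{\cO_\infty, \cO_2\}$, both of which absorb $\mathcal{Z}$ --- or stably finite with a unique tracial state $\tau$. Assuming the latter, Remark~\ref{aup} makes almost unperforation equivalent to strict comparison by the (single) trace $\tau$. My plan is to verify condition (ii) of Proposition~\ref{prop:embedding1} for every $n$; the implication (ii) $\Rightarrow$ (iv) there does \emph{not} require stable rank one, so it supplies unital embeddings $Z_{n,n+1} \hookrightarrow \mathcal{D}$, and the ultrapower argument in the proof of Proposition~\ref{prop:Zembed} (via \cite[Proposition 2.2]{TomsWin:ASH}) then yields $\mathcal{D} \cong \mathcal{D} \otimes \mathcal{Z}$.

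To verify (ii) for a fixed $n$, I would first choose $\ep_0 \in (0, 1/(n+1))$ and apply Lemma~\ref{splitting-into-pieces} with $k = n$ to obtain $x \in W(\mathcal{D})$ satisfying $nx \le \langle 1_\mathcal{D} \rangle$ and $d_\tau(x) \ge (1-\ep_0)/n > 1/(n+1)$. I would then mimic the opening of the proof of (i) $\Rightarrow$ (ii) in Proposition~\ref{prop:embedding1} (which in that portion uses only \cite[Proposition~2.4]{Ror:UHFII} and not stable rank one): write $x = \langle d \rangle$ for some positive $d \in M_k(\mathcal{D})$, pick small $\delta > 0$, and extract a row $t = (t_1, \ldots, t_n) \in M_{1,nk}(\mathcal{D})$ with $t_i^{*} t_j = \delta_{ij}(d-\delta)_{+}$. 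The elements $e_j := t_j t_j^{*} \in \mathcal{D}$ are then pairwise orthogonal and mutually $\sim$-equivalent (via the polar decompositions of the $t_j$ in $\mathcal{D}^{**}$), with $d_\tau(e_j)$ as close to $d_\tau(x)$ as we please, hence strictly above $1/(n+1)$.

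The hard part --- the step where I must diverge from the proof of Proposition~\ref{prop:embedding1} --- is to produce $\eta > 0$ such that $1_\mathcal{D} - f_\eta(e_1 + \cdots + e_n) \precsim (e_1 - \eta)_{+}$. The original argument invokes Lemma~\ref{lm:cancellation2} for this, which relies on stable rank one. In its place I would invoke strict comparison by $\tau$: the inequality $(n+1) d_\tau(e_1) > 1$ rewrites as $1 - n \, d_\tau(e_1) < d_\tau(e_1)$, while $d_\tau(1_\mathcal{D} - f_\eta(e_1 + \cdots + e_n)) \to 1 - n \, d_\tau(e_1)$ and $d_\tau((e_1 - \eta)_{+}) \to d_\tau(e_1)$ as $\eta \to 0^{+}$. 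For small enough $\eta$, strict comparison then delivers the desired Cuntz subequivalence. Setting $b_j := f_\eta(e_j)$ for a suitable $\ep$ finishes the verification of (ii); orthogonality and mutual $\sim$-equivalence of the $b_j$ are inherited from those of the $e_j$ via functional calculus and polar decompositions in $\mathcal{D}^{**}$.
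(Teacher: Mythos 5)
Your proposal is correct and follows essentially the same route as the paper's own proof: dispose of the infinite case via $\cO_\infty$-absorption, then verify condition (ii) of Proposition~\ref{prop:embedding1} by combining Lemma~\ref{splitting-into-pieces} with the opening of the proof of (i)~$\Rightarrow$~(ii) there, and replace the appeal to Lemma~\ref{lm:cancellation2} by strict comparison with respect to the unique trace, exactly as the paper does. One inessential slip: a purely infinite strongly self-absorbing \Cs{} need not be $\cO_2$ or $\cO_\infty$ (consider $\cO_\infty \otimes M_{2^\infty}$), but all that is needed --- and true --- is that any infinite strongly self-absorbing \Cs{} absorbs $\cO_\infty$ and hence $\cZ$.
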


\begin{proof}
By \cite{Ror:Z-absorbing}, $\mathcal{Z}$-stability  implies that the
Cuntz semigroup is almost unperforated. To show the converse, it will be enough 
to consider finite $\mathcal{D}$, for if $\mathcal{D}$ is infinite, it is well 
known to absorb $\mathcal{O}_{\infty}$, hence $\mathcal{Z}$. We show
that Proposition~\ref{prop:embedding1}(ii) holds for each natural
number $n$, which then, by Proposition~\ref{prop:embedding1}, will 
imply that $Z_{n,n+1}$ embeds unitally into
$\mathcal{D}$. As in the proof of Proposition~\ref{prop:Zembed}, 
this entails that $\mathcal{Z}$ embeds unitally into $\mathcal{D}$. 
We can finally use Proposition~\ref{prop:inclusion} to conclude that  
$\mathcal{D}$ is $\mathcal{Z}$-stable.

Our proof of \ref{prop:embedding1}~(ii) follows to a large extent that of 
(i) $\Rightarrow$ (ii) of Proposition~\ref{prop:embedding1}; however, 
we will have to avoid use of Lemma~\ref{lm:cancellation2}, since we do not assume 
$\D$ to be of stable rank one.

Let $n \in \mathbb{N}$ be given. By Lemma~\ref{splitting-into-pieces} 
there is $x \in W(\mathcal{D})$ such that $nx \le \langle 1_\mathcal{D}
\rangle$ and $d_\tau(x)> 1/(n+1)$. Now follow the proof of (i)
$\Rightarrow$ (ii) of Proposition~\ref{prop:embedding1} to the point
where $\delta>0$, $d \in M_{k}(\mathcal{D})$, and pairwise orthogonal
positive elements $e_1, e_2, \dots, e_n$ in $\mathcal{D}$ have 
been constructed such that $x = \langle d \rangle$ and $e_{j} \sim (d-
\delta)_{+}$. (Note that the assumption of stable rank one was not used up to that point.)
Upon choosing $\delta >0$  small enough, and using lower
semicontinuity of $d_\tau$, one can further obtain that $d_\tau(e_1) =
d_\tau((d-\delta)_+) > 1/(n+1)$ (recalling that $d_\tau(d) = d_\tau(x)
> 1/(n+1)$). For $\eta>0$, let $f_\eta$ be as in Notation~\ref{g-eta-notation}. As $1_\mathcal{D} -
f_\eta(e_1+e_2+ \cdots +e_n) \perp (e_1+e_2+ \cdots +e_n - \eta)_+$ we
get 
\begin{eqnarray*}
\lim_{\eta \to 0+} d_\tau(1_\mathcal{D} - f_\eta(e_1+e_2+ \cdots +e_n)) & \le &
1- \lim_{\eta \to 0+} d_\tau( (e_1+e_2+ \cdots +e_n - \eta)_+) \\
&=& 1-nd_\tau(e_1)\\ &< & d_\tau(e_1) \\
& = & \lim_{\eta \to 0+}
d_\tau((e_1-\eta)_+)
\end{eqnarray*}
(the first inequality is actually equality). Thus 
we infer, by Remark~\ref{aup} and the assumption that $W(\mathcal{D})$ is
almost unperforated, that 
$$1_\mathcal{D} - f_\eta(e_1+e_2+ \cdots +e_n) \precsim (e_1-\eta)_+$$
for some $\eta > 0$. We can now follow the last three lines of the proof
of (i) $\Rightarrow$ (ii) of Proposition~\ref{prop:embedding1} to
arrive at the conclusion that \ref{prop:embedding1}(ii) holds. 
\end{proof}

\begin{corollary}
In Theorem~\ref{thm:Z-char}, conditions (ii) and (iii) may as well be replaced by
\begin{enumerate}
\item[{\rm (ii')}] $\mathcal{D}$ is  finite
\item[{\rm (iii')}] $W(\mathcal{D})$ is almost unperforated.
\end{enumerate}
\end{corollary}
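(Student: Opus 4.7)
The plan is to show that (i), (ii'), (iii'), (iv) together characterize $\cZ$. The direction from $\cZ$ to all four conditions is immediate: $\cZ$ has stable rank one (hence is finite) and its Cuntz semigroup is almost unperforated, while the other two conditions are part of Theorem~\ref{thm:Z-char}. For the converse, I would follow the same scheme as in the proof of Theorem~\ref{thm:Z-char} and construct unital embeddings in both directions between $\mathcal{D}$ and $\cZ$, so that Proposition~\ref{prop:inclusion}(ii) yields the isomorphism.

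The unital embedding $\mathcal{D} \to \cZ$ uses only (i) and (iv) and can be transplanted verbatim from the proof of Theorem~\ref{thm:Z-char}: Proposition~\ref{prop:ZpqD} gives $\mathcal{D} \otimes Z_{2^\infty,3^\infty} \cong Z_{2^\infty,3^\infty}$, and Proposition~\ref{prop:Zpq-embed} then furnishes a unital embedding $Z_{2^\infty,3^\infty} \hookrightarrow \cZ$.

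The reverse embedding $\cZ \to \mathcal{D}$ is the step where the substitution of (iii) by (iii') matters, and it is precisely what Proposition~\ref{comparison-Z-stable} is designed to handle: the combination of (i) and (iii') gives $\mathcal{D} \otimes \cZ \cong \mathcal{D}$, and Proposition~\ref{prop:inclusion}(i) then provides the unital embedding $\cZ \hookrightarrow \mathcal{D}$. The only genuinely new ingredient beyond the proof of Theorem~\ref{thm:Z-char} is therefore Proposition~\ref{comparison-Z-stable}, which has already been established, so I do not anticipate any real obstacle. Condition (ii') plays no essential role in this argument but is natural to list in parallel with the original (ii); in fact (i) and (iv) together already force $\mathcal{D}$ to embed unitally into a UHF algebra and hence to be finite.
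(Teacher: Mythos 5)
Your proof is correct, and it hinges on the same key new ingredient as the paper's, namely Proposition~\ref{comparison-Z-stable} (almost unperforation plus strong self-absorption implies $\mathcal{Z}$-stability). The route after that differs, though: the paper does not re-run the embedding argument but instead verifies that (ii') and (iii') imply the original conditions (ii) and (iii) of Theorem~\ref{thm:Z-char} and then invokes that theorem as a black box. Concretely, it uses (ii') together with the $\mathcal{Z}$-stability obtained from (iii') to deduce stable rank one (citing \cite{Ror:Z-absorbing}), and then uses Remark~\ref{Z-stable-weak-comparison} to recover the divisibility condition \ref{thm:Z-char}(iii). Your version bypasses both of these auxiliary facts by going straight to the two unital embeddings: $\mathcal{D}\hookrightarrow Z_{2^\infty,3^\infty}\hookrightarrow\mathcal{Z}$ from (i) and (iv) via Propositions~\ref{prop:ZpqD} and~\ref{prop:Zpq-embed}, and $\mathcal{Z}\hookrightarrow\mathcal{D}$ from $\mathcal{D}\otimes\mathcal{Z}\cong\mathcal{D}$ via Proposition~\ref{prop:inclusion}. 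This buys you a slightly sharper statement: your argument never uses (ii'), and your closing observation that (i) and (iv) already force finiteness (a unital subalgebra of a UHF algebra is stably finite) is correct, so (ii') is indeed redundant in the presence of (iv) --- something the paper's proof does not reveal, since there (ii') is genuinely consumed in deducing stable rank one. The trade-off is that the paper's reduction is shorter on the page because it reuses Theorem~\ref{thm:Z-char} wholesale.
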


\begin{proof}
By \cite{JiaSu:Z} and \cite{Ror:Z-absorbing}, $\mathcal{Z}$ satisfies
(ii') and (iii'),  
so we have to check that (ii') and (iii') (together with the other
hypotheses) imply conditions  
(ii) and (iii) of \ref{thm:Z-char}. But (iii') entails that
$\mathcal{D}$ is $\mathcal{Z}$-stable  
by Proposition~\ref{comparison-Z-stable}, and (ii') together with
$\mathcal{Z}$-stability  
yields stable rank one, cf.\ \cite{Ror:Z-absorbing}. Now by 
Remark~\ref{Z-stable-weak-comparison}, $\mathcal{D}$ satisfies
\ref{prop:embedding1}(i),  
hence \ref{thm:Z-char}(iii).
\end{proof}

\section{Strongly self-absorbing $C^{*}$-algebras with finite decomposition rank}
\label{sec:ssa-dr}

\noindent
In this final section we single out the Jiang--Su algebra among strongly 
self-absorbing $C^{*}$-algebras with finite decomposition rank. Recall 
that the latter is a notion of topological dimension for nuclear
$C^{*}$-algebras that
was introduced by E.\ Kirchberg and the second named author in 
\cite{KirWinter:dr}.

The order on the Cuntz semigroup is not the
algebraic order (i.e., if $x \le y$, then we do not necessarily have
$z$ in the Cuntz semigroup such that $y = x+z$). The following lemma,
which is needed for the proof of Proposition~\ref{dr-n-strict-comparison}
below, seeks to remedy this situation. 

\newpage
\begin{lemma} \label{lm:almostalgorder} Let $A$ be a \Cs.
\begin{enumerate}
\item Let $a,b$ be positive elements in $A$ such that $a \precsim b$,
  and let $\ep >0$ be given. Then there are positive elements $a_0$ and $c$ in
  $\overline{bAb}$ such that $$a_0 \perp c, \qquad a_0 \sim
  (a-2\ep)_+, \qquad b \precsim (a-\ep)_+ \oplus c.$$
Moreover, if $d$ is a lower semicontinuous dimension function on $A$
and if $\delta > 0$ is given, then there exists $\ep_0 > 0$ such that
if $0<\ep \le \ep_0$, then 
$$d(b)-d(a) \le d(c) \le d(b)-d(a)+\delta.$$
\item Let $d$ be a lower semicontinuous dimension function on $W(A)$,
  and let $x,y \in W(A)$ be 
  such that $x \le y$. Then, for each $\delta > 0$,
 there is $z \in W(A)$ such that $x+z \ge y$ and $d(z) \le
 d(y)-d(x)+\delta$. 
\end{enumerate}
\end{lemma}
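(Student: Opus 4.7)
The plan is to build $a_0$ from a Rørdam-type carrier for $(a-\ep)_+$, cut $b$ by a function of this carrier to obtain $c$, and then extract the quantitative bound from the near-tautological inequality $c + a_0 \precsim b$. For part (i), starting from \cite[Proposition~2.4]{Ror:UHFII} applied to $a \precsim b$ I would pick $r \in A$ with $r^*br=(a-\ep)_+$, set $y := b^{1/2}r$, and put $\tilde{a}_0 := yy^* = b^{1/2}rr^*b^{1/2} \in \overline{bAb}$, so that $\tilde{a}_0 \sim y^*y = (a-\ep)_+$. Then $a_0 := (\tilde{a}_0 - \ep)_+$ satisfies $a_0 \sim ((a-\ep)_+ - \ep)_+ = (a-2\ep)_+$ by the standard $(x-\mu)_+$ compatibility of Cuntz equivalence.

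To install orthogonality I would choose a continuous $g \colon [0,\infty) \to [0,1]$ with $g(0)=0$, $g$ linear on $[0,\ep]$, and $g \equiv 1$ on $[\ep,\infty)$, and set $e := g(\tilde{a}_0) \in A$. Two spectral observations then follow: $e$ has the same support projection as $\tilde{a}_0$, hence $e \sim \tilde{a}_0$; and $(1-g(t))(t-\ep)_+ \equiv 0$ on $[0,\infty)$, which gives $(1-e)a_0 = 0$. With $c := (1-e)^{1/2} b (1-e)^{1/2}$ we have $0 \le c \le b$ (so $c \in \overline{bAb}$), and $(1-e)a_0 = 0$ inside the commutative $C^*(\tilde{a}_0)$ yields $c \perp a_0$. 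The inclusion $b \precsim (a-\ep)_+ \oplus c$ follows by writing $b = b^{1/2}(1-e)b^{1/2} + b^{1/2}eb^{1/2}$: the first summand is Cuntz equivalent to $c$, while $b^{1/2}eb^{1/2} \sim e^{1/2}be^{1/2} \le \|b\|e \precsim \tilde{a}_0 \sim (a-\ep)_+$.

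The main technical point is the quantitative bound, which I would derive from $c + a_0 \precsim b$. This is almost immediate: $c \le b$ and $a_0 \le \tilde{a}_0 = b^{1/2}rr^*b^{1/2} \le \|r\|^2 \, b$ give $c + a_0 \le (1+\|r\|^2)b$, and Cuntz comparison ignores positive scalars. Orthogonality $c \perp a_0$ then yields $d(c) + d(a_0) = d(c+a_0) \le d(b)$. Lower semicontinuity of $d$ gives $d(a_0) = d((a-2\ep)_+) \nearrow d(a)$ as $\ep \searrow 0$, so for $\ep$ small enough one has $d(a_0) \ge d(a) - \delta$, which rearranges to $d(c) \le d(b) - d(a) + \delta$. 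The lower bound $d(c) \ge d(b) - d(a)$ is immediate from $b \precsim (a-\ep)_+ \oplus c$ combined with $d((a-\ep)_+) \le d(a)$.

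Part (ii) is a direct transcription of (i): lift $x,y$ to positive representatives $a \precsim b$ in $M_\infty(A)$, apply part (i) with the given $\delta$, and set $z := \langle c \rangle \in W(A)$. Then $b \precsim (a-\ep)_+ \oplus c$ reads $y \le x + z$ in $W(A)$, and $d(c) \le d(b) - d(a) + \delta$ is precisely $d(z) \le d(y) - d(x) + \delta$.
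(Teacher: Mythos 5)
Your construction is essentially the one in the paper: you transport $(a-\ep)_+$ into $\overline{bAb}$ via R{\o}rdam's Proposition~2.4, cut down by $\ep$ to get $a_0\sim (a-2\ep)_+$, compress $b$ by the function $1-g=h_\ep$ of the carrier to get $c$, and extract both dimension inequalities from $a_0\perp c$ together with $b\precsim (a-\ep)_+\oplus c$; part (ii) is handled identically. (The paper proves $b\precsim vv^*+c$ by observing that $vv^*+c$ is strictly positive in $\overline{bAb}$, whereas you use the decomposition $b=b^{1/2}(1-e)b^{1/2}+b^{1/2}eb^{1/2}$; both work.)

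There is, however, one false intermediate claim, used twice: $c=(1-e)^{1/2}b(1-e)^{1/2}\le b$. Since $e=g(\tilde a_0)=g(b^{1/2}rr^*b^{1/2})$ has no reason to commute with $b$, the operator inequality $h^{1/2}bh^{1/2}\le b$ for a positive contraction $h$ fails in general (e.g.\ take $b$ a rank-one projection and $h$ a non-commuting projection in $M_2$). You invoke it first to place $c$ in $\overline{bAb}$ and again to get $c+a_0\precsim b$. Both conclusions are nevertheless true, for a different reason: $e$ lies in the hereditary sub-\Cs{} $\overline{bAb}$, hence so does $k:=1-(1-e)^{1/2}$, and therefore $c=b-kb-bk+kbk\in\overline{bAb}$; then $c+a_0$ is a positive element of $\overline{bAb}$ and so $c+a_0\precsim b$ automatically, which is all the quantitative step needs. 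A smaller notational point: ``$e$ and $\tilde a_0$ have the same support projection, hence $e\sim\tilde a_0$'' asserts Pedersen equivalence where only Cuntz equivalence is available; but since you only use $e\precsim\tilde a_0$ (which holds because $e\in\overline{\tilde a_0A\tilde a_0}$), this does not affect the argument. With these repairs the proof is correct and coincides in substance with the paper's.
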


\begin{proof} (i).
 By
  \cite[Proposition~2.4]{Ror:UHFII} there is $v \in A$ such that
  $v^*v=(a-\ep)_+$ and $vv^*$ belongs to $\overline{bAb}$. With
  $h_\ep$ as defined in \eqref{h} we have $h_\ep(vv^*) \perp
  (vv^*-\ep)_+$. (We remark that $h_\ep(vv^*)$ belongs to $A$ if $A$
  is unital, and that it otherwise belongs to the unitization of $A$.)
  Put 
$$a_0 = (vv^*-\ep)_+ \sim (v^*v-\ep)_+ = (a-2\ep)_+, \qquad 
c = h_\ep(vv^*)bh_\ep(vv^*),$$
and note that $a_0$ and $c$ both belong to $\overline{bAb}$. Moreover,
$a_0 \perp c$, and $vv^* + c$ is strictly positive in
$\overline{bAb}$. The latter implies that 
$$b \; \precsim \;  vv^* + c \; \precsim \;  
vv^* \oplus c \sim (a-\ep)_+ \oplus c.$$

If $d$ is a lower semicontinuous dimension function on $A$, then for
each $\delta >0$ there
is $\ep_0> 0$ such that $d((a-2\ep_0)_+)\ge d(a)-\delta$.
As $a_0 \perp c$ we have $d(a_0)+d(c)
= d(a_0+c) \le d(b)$, whence
$$d(c) \le d(b) - d(a_0) = d(b) - d((a-2\ep)_+) \le d(b) - d((a-2\ep_0)_+) \le
d(b)-d(a)+\delta,$$
whenever $0 < \ep \le \ep_0$.
On the other hand, since $b \precsim (a-\ep)_+ \oplus c$ we have $d(b)
\le d((a-\ep)_+) + d(c) \le d(a) +d(c)$, which entails that $d(c)\ge
d(b) -d(a)$.

(ii). Upon replacing $A$ with a matrix algebra over $A$ we can assume
that $x = \langle a \rangle$ and $y = \langle b \rangle$ for some
positive elements $a,b \in A$. Now use (i) to find $\ep>0$ and $c$
such that $b \precsim (a-\ep)_+ \oplus c \precsim a \oplus c$ and such
that $d(c) \le d(b)-d(a)+\delta$. We can then take $z$ to be $\langle
c \rangle$. 
\end{proof}

\noindent
We quote below a result by Andrew Toms and the second named author stating
that $C^{*}$-algebras with finite decomposition rank satisfy a weak
version of strict comparison. The original lemma was stated in a slightly 
different manner; the version below employs the fact that decomposition rank 
is invariant under taking matrix algebras.

\begin{lemma}[Toms--Winter, {\cite[Lemma~6.1]{TomsWinter:VI}}]
\label{dr-comparison}
Let $A$ be a simple, separable and unital $C^{*}$-algebra with
decomposition rank
$n < \infty$. Suppose that $x,y_0,y_1, \dots, y_n \in W(A)$ satisfy
$d(x) < d(y_j)$ for all $j=0,1, \dots, n$ and for any lower
semicontinuous dimension function $d$ on $A$. Then $x \le y_0+y_1 +
\cdots + y_n$. 
\end{lemma}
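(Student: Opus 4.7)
My plan is to combine the c.p.c.\ approximations of $A$ afforded by decomposition rank $n$ with a trace-comparison argument carried out blockwise. Pick positive representatives $a, b_{0}, \dots, b_{n}$ of $x, y_{0}, \dots, y_{n}$ in some matrix algebra over $A$. Since $A$ is simple, separable and unital, the set of normalised lower semicontinuous dimension functions on $A$ is weak-$*$ compact, so the strict pointwise inequalities $d(x) < d(y_{j})$ sharpen to a uniform gap: there is $\eta > 0$ with $d(y_{j}) \ge d(x) + \eta$ for every such $d$ and every $j$.

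Next, I invoke the definition of decomposition rank: given any $\varepsilon > 0$, there exist a finite dimensional $C^{*}$-algebra $F = F_{0} \oplus \cdots \oplus F_{n}$, a c.p.c.\ map $\varphi \colon A \to F$, and a c.p.c.\ map $\psi \colon F \to A$ with $\|\psi\varphi(a) - a\| < \varepsilon$ and each restriction $\psi_{j} := \psi|_{F_{j}}$ a c.p.c.\ order zero map. Since $\psi_{j}(\varphi(a)) \le \psi\varphi(a)$, R\o rdam's standard perturbation lemma (\cite{Ror:UHFII}) then yields
\[
(a - \varepsilon)_{+} \precsim \psi\varphi(a) \precsim \bigoplus_{j=0}^{n} \psi_{j}(\varphi(a))
\]
in $W(A)$. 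Letting $\varepsilon \to 0$, the problem reduces to establishing $\psi_{j}(\varphi(a)) \precsim b_{j}$ individually for every $j$, provided the approximation is sufficiently fine.

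For this key step I use the order zero structure theorem (Proposition~\ref{order-zero-facts}): $\psi_{j}$ factors through its supporting $^{*}$-homomorphism $\bar\psi_{j} \colon F_{j} \to A^{**}$, and in particular the Cuntz class of $\psi_{j}(c)$ for $c \in F_{j}^{+}$ is controlled by the rank profile of $c$ in the matrix blocks of $F_{j}$. Replacing $\varphi(a)$ by its support projection $p_{j} \in F_{j}$ therefore gives $\psi_{j}(\varphi(a)) \sim \psi_{j}(p_{j})$ in $A$. A short computation using lower semicontinuity of $d$ together with $\|\psi\varphi(a)-a\|<\varepsilon$ yields $d(\psi\varphi(a)) = d(a)$; combined with $\psi_{j}(\varphi(a)) \precsim \psi\varphi(a)$ and the uniform gap $\eta$, this forces the strict inequality $d(\psi_{j}(p_{j})) \le d(a) \le d(b_{j}) - \eta$ for every normalised $d$ and every $j$.

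The main obstacle is then to convert this strict tracial inequality into an honest Cuntz subequivalence $\psi_{j}(p_{j}) \precsim b_{j}$, since we have \emph{not} assumed that $A$ has strict comparison. This is precisely the role of Lemma~\ref{lm:almostalgorder}: it upgrades the Cuntz order to an almost-algebraic one, modulo arbitrarily small tracial error. Combined with the constrained, matricial structure of the image of $\psi_{j}$ (which lives inside the hereditary subalgebra generated by $\psi_{j}(1_{F_{j}})$), and a careful iterative peeling of $b_{j}$, one can extract an element of $\overline{b_{j} A b_{j}}$ witnessing $\psi_{j}(p_{j}) \precsim b_{j}$. Reassembling the $n+1$ pieces then delivers $x \le y_{0} + y_{1} + \cdots + y_{n}$, as required.
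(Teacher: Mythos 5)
First, a remark on context: the paper does not prove this lemma at all --- it is quoted (up to passing to matrix algebras) from Toms--Winter \cite[Lemma~6.1]{TomsWinter:VI}, so there is no internal proof to compare against. Your skeleton --- take a c.p.c.\ approximation $(F,\varphi,\psi)$ with an $(n+1)$-coloured decomposition $F=F_0\oplus\cdots\oplus F_n$ into order zero pieces, deduce $(a-\ep)_+\precsim\bigoplus_{j}\psi_j\varphi(a)$, and then compare the $j$-th summand with $b_j$ --- is indeed the shape of the published argument. The problem is that all of the real work sits in the last step, and there your sketch has a genuine gap.

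The step $\psi_j(p_j)\precsim b_j$ is not justified, and the tools you invoke cannot deliver it. You propose to pass from the inequality $d(\psi_j(p_j))<d(b_j)$, valid for all lower semicontinuous dimension functions $d$, to an honest Cuntz subequivalence in $A$ via Lemma~\ref{lm:almostalgorder} and an ``iterative peeling'' of $b_j$. But that passage \emph{is} strict comparison, which is not assumed and is exactly the property of which this lemma establishes only a weak, $(n+1)$-fold version; Lemma~\ref{lm:almostalgorder} merely produces an almost-complement for a subequivalence one already has and never manufactures a subequivalence out of dimension-function data. If your peeling argument worked, the decomposition rank hypothesis and the $(n+1)$ targets would be superfluous. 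The published proof avoids this circularity by carrying out the comparison \emph{inside} the finite-dimensional algebras $F_j$, where (given a sufficiently fine approximation and the uniform gap) it reduces to a blockwise rank inequality between suitable cut-downs of $\varphi(a)$ and $\varphi(b_j)$; this is then transported back to $A$ using that c.p.c.\ order zero maps preserve Cuntz subequivalence and that $\psi_j\varphi(b_j)\le\psi\varphi(b_j)\approx b_j$, so the image lands in $\overline{b_jAb_j}$ up to a small cut-down. Two further inaccuracies, fixable but real: the set of normalised \emph{lower semicontinuous} dimension functions is not weak-$*$ compact, so the uniform gap must be extracted from the compact set of all dimension functions together with the regularisation $d\mapsto\sup_{\ep}d((\,\cdot\,-\ep)_+)$, and is then only available for the cut-down $(a-\ep)_+$ rather than for $a$ itself; and the inequality $d(\psi_j(p_j))\le d(a)$ fails in general, since replacing $\varphi(a)$ by its support projection in $F_j$ can increase every dimension function drastically (e.g.\ when $\varphi(a)$ has small but nonzero components in every block), so one must work with spectral cut-downs of $\varphi(a)$ rather than its support projection.
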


\noindent The lemma above has the following two sharper versions for
strongly self-absorbing \Cs s:  

\begin{lemma}
\label{ssa-n-comparison}
Let $\mathcal{D}$ be strongly self-absorbing with decomposition rank
$n<\infty$, and let $x,y \in W(\mathcal{D})$ with $(n+1) d_\tau(x) <
d_\tau(y)$ be given. Then $x \le y$.
\end{lemma}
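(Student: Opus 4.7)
The plan is to split $y$ into $n+1$ nearly equal pieces using the averaging result Lemma~\ref{splitting-into-pieces}, and then feed those pieces into the Toms--Winter weak comparison result Lemma~\ref{dr-comparison}. Since the strict inequality $(n+1)d_\tau(x) < d_\tau(y)$ leaves some slack, we will have room to absorb the small error introduced by Lemma~\ref{splitting-into-pieces}.

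Concretely, I would begin by choosing $\ep > 0$ small enough that $(n+1)d_\tau(x) < d_\tau(y) - \ep$. Apply Lemma~\ref{splitting-into-pieces} to $y$ with $k = n+1$ and this $\ep$ to obtain $y' \in W(\mathcal{D})$ such that $(n+1)y' \le y$ and $(n+1)d_\tau(y') \ge d_\tau(y) - \ep > (n+1)d_\tau(x)$. Dividing by $n+1$ yields $d_\tau(y') > d_\tau(x)$.

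Next, I would invoke Lemma~\ref{dr-comparison} with $y_0 = y_1 = \cdots = y_n = y'$. The hypothesis asks that $d(x) < d(y')$ for every (nonzero) lower semicontinuous dimension function $d$ on $\mathcal{D}$. Here is where the hypothesis of strong self-absorption enters: $\mathcal{D}$ is simple, nuclear, and either stably finite with unique tracial state or purely infinite; finite decomposition rank precludes the purely infinite case (indeed, $d_\tau$ already exists by assumption), so $\mathcal{D}$ has a unique tracial state $\tau$. Being nuclear, $\mathcal{D}$ is exact, so by Remark~\ref{ldf} every nonzero lower semicontinuous dimension function on $\mathcal{D}$ is a positive scalar multiple of $d_\tau$. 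Hence the required strict inequality reduces to the single condition $d_\tau(x) < d_\tau(y')$, which we have arranged. Lemma~\ref{dr-comparison} then gives $x \le (n+1)y'$, and combined with $(n+1)y' \le y$ we conclude $x \le y$.

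There is no real obstacle here once the two key inputs are identified; the only point that requires some care is verifying that every lower semicontinuous dimension function on $\mathcal{D}$ is (up to positive scaling) equal to $d_\tau$, so that the ``for all $d$'' hypothesis of Lemma~\ref{dr-comparison} collapses to a single numerical check against the unique trace. This is ultimately a consequence of exactness together with uniqueness of the tracial state, both of which are automatic for strongly self-absorbing finite $C^{*}$-algebras.
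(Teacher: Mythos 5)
Your proof is correct and follows essentially the same route as the paper's: apply Lemma~\ref{splitting-into-pieces} with $k=n+1$ to produce $y'$ with $(n+1)y'\le y$ and $d_\tau(x)<d_\tau(y')$, then feed $y_0=\cdots=y_n=y'$ into Lemma~\ref{dr-comparison}. The only difference is that you make explicit (via exactness and uniqueness of the trace, as in Remark~\ref{ldf}) why the ``for all lower semicontinuous dimension functions'' hypothesis collapses to the single check against $d_\tau$, a point the paper leaves implicit.
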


\begin{proof}
Apply Lemma~\ref{splitting-into-pieces} with $k=n+1$ to obtain 
$z \in W(\mathcal{D})$ such that $(n+1) z \le y$ and 
$$(n+1) d_{\tau}(z) > d_{\tau}(y) - (d_{\tau}(y) - (n+1) d_{\tau}(x)) 
=  (n+1) d_{\tau}(x);$$ we then have $d_{\tau}(x) < d_{\tau}(z)$. 
Now from Lemma~\ref{dr-comparison} we obtain $x \le (n+1)z \le y $.
\end{proof} 
 
\begin{lemma}
\label{addition-domination}
Let $\mathcal{D}$ be strongly self-absorbing with decomposition rank
$n < \infty$, and let $x,y,z \in W(\mathcal{D})$ be such that $x \le y$
and $(n+1)d_\tau(z) < d_\tau(y) - d_\tau(x)$. Then $x+z \le y$.
\end{lemma}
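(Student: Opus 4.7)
The plan is to reduce the statement to an application of Lemma~\ref{ssa-n-comparison} by extracting an orthogonal ``complement'' of $x$ inside $y$ via Lemma~\ref{lm:almostalgorder}(i). Fix positive representatives $a, b \in M_\infty(\mathcal{D})^+$ of $x$ and $y$ with $a \precsim b$, and $z_0 \in M_\infty(\mathcal{D})^+$ with $z = \langle z_0 \rangle$ (we may view all three elements as living in a common $M_k(\mathcal{D})$). Since
\[
\big((a \oplus z_0) - \ep\big)_+ \;=\; (a-\ep)_+ \oplus (z_0-\ep)_+
\]
for every $\ep>0$, and since $a \oplus z_0 \precsim b$ is equivalent to $\big((a\oplus z_0)-\ep\big)_+ \precsim b$ for all $\ep>0$, it will suffice to prove that
\[
\langle (a-\ep)_+ \rangle \,+\, \langle (z_0-\ep)_+ \rangle \;\le\; y \qquad \text{for every } \ep>0.
\]

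Given $\ep>0$, apply Lemma~\ref{lm:almostalgorder}(i) to the pair $(a,b)$, with $\ep/2$ in place of $\ep$ in the lemma and with the lower semicontinuous dimension function $d_\tau$. This produces orthogonal positive elements $a_0, c \in \overline{b M_\infty(\mathcal{D}) b}$ satisfying $a_0 \sim (a-\ep)_+$ and
\[
d_\tau(c) \;\ge\; d_\tau(b)-d_\tau(a) \;=\; d_\tau(y)-d_\tau(x).
\]
The hypothesis $(n+1) d_\tau(z) < d_\tau(y) - d_\tau(x)$ and the monotonicity of $d_\tau$ then give
\[
(n+1)\, d_\tau\big(\langle (z_0-\ep)_+ \rangle\big) \;\le\; (n+1)\, d_\tau(z) \;<\; d_\tau(c),
\]
so Lemma~\ref{ssa-n-comparison} yields $\langle (z_0-\ep)_+ \rangle \le \langle c \rangle$.

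Since $a_0 \perp c$ and both lie in the hereditary sub-\Cs{} generated by $b$, the positive element $a_0+c$ belongs to $\overline{b M_\infty(\mathcal{D}) b}$, and therefore
\[
\langle (a-\ep)_+ \rangle + \langle (z_0-\ep)_+ \rangle \;\le\; \langle a_0 \rangle + \langle c \rangle \;=\; \langle a_0 + c \rangle \;\le\; \langle b \rangle \;=\; y.
\]
As $\ep>0$ was arbitrary, we conclude $x + z \le y$. The only real content is the preparatory Lemma~\ref{lm:almostalgorder}(i), which converts the a priori non-algebraic order $x \le y$ into an honest orthogonal decomposition so that Lemma~\ref{ssa-n-comparison} can be deployed on the complement; the strict inequality in the hypothesis on $z$ gives enough slack that the passage to $\ep$-cut-downs costs nothing, so no further obstacle arises.
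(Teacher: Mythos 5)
Your proof is correct and follows essentially the same route as the paper's: reduce to $\ep$-cut-downs, use Lemma~\ref{lm:almostalgorder}(i) to produce the orthogonal pair $a_0 \sim (a-\ep)_+$ and $c$ with $d_\tau(c) \ge d_\tau(y)-d_\tau(x)$, and then invoke Lemma~\ref{ssa-n-comparison} to fit $z$ (there without the harmless extra cut-down of $z_0$) below $c$. The only differences are cosmetic reparametrizations of $\ep$.
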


\begin{proof}
We may assume that  $x= \langle a \rangle$, $y= \langle b \rangle$ and 
$z= \langle e \rangle$, where $a$, $b$ and $e$ are positive elements
in some matrix algebra $M_{r}(\mathcal{D})$ over $\D$. To show that
$x+z \le y$ it suffices to show that $(a-2\ep)_+ \oplus e \precsim b$
for all $\ep>0$. 

By Lemma~\ref{lm:almostalgorder}~(i) there are mutually orthogonal
positive elements 
$a_0$ and $c$ in the hereditary sub-\Cs{} of $M_r(\D)$ generated by
$b$ such that $a_0 \sim (a-2\ep)_+$ and
$d_\tau(c) \ge d_\tau(b)-d_\tau(a) >
(n+1)d_\tau(e)$. But then it follows from Lemma~\ref{ssa-n-comparison}
that $e \precsim c$, whence 
$$(a-2\ep)_+ \oplus e \; \precsim \; (a-2\ep)_+ \oplus c \; \sim
\; a_0 \oplus c \; \sim \; a_0 + c \; \precsim \;  b,$$  
as desired. 
\end{proof}

\begin{proposition}
\label{dr-n-strict-comparison}
Any strongly self-absorbing \Cs{} $\mathcal{D}$ with finite
decomposition rank absorbs the Jiang--Su algebra $\cZ$, i.e.,
$\mathcal{D} \otimes \cZ \cong \mathcal{D}$.
\end{proposition}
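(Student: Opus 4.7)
Let $r := \dr(\mathcal{D}) < \infty$. Finite decomposition rank entails quasidiagonality of $\mathcal{D}$ and hence stable finiteness; being strongly self-absorbing, $\mathcal{D}$ therefore has a unique tracial state $\tau$. The strategy is to imitate the proof of Proposition~\ref{comparison-Z-stable}, with Lemma~\ref{ssa-n-comparison} playing the role that almost unperforation did there. More precisely, the plan is to verify condition~(ii) of Proposition~\ref{prop:embedding1} for every $n \in \mathbb{N}$; once this is in hand, the implication (ii)~$\Rightarrow$~(iv) of that proposition (which does not require stable rank one) supplies unital embeddings $Z_{n,n+1} \hookrightarrow \mathcal{D}$ for all $n$, and the argument of Proposition~\ref{prop:Zembed}---passing to $\mathcal{D}_\infty \cap \mathcal{D}'$ and invoking \cite[Proposition~2.2]{TomsWin:ASH}---yields $\mathcal{D} \cong \mathcal{D} \otimes \cZ$.

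Fix $n \in \mathbb{N}$ and set $\lambda := (r+1)/(n(r+1)+1)$; note $\lambda < 1/n$. Using Lemma~\ref{splitting-into-pieces} I would pick $x \in W(\mathcal{D})$ with $nx \le \langle 1_{\mathcal{D}} \rangle$ and $d_\tau(x) > \lambda$, which is possible because $\lambda < 1/n$ and the lemma supplies values of $d_\tau(x)$ arbitrarily close to $1/n$. Represent $x$ by a positive element $d$ in a matrix algebra over $\mathcal{D}$ and, exactly as in the first part of the proof of (i)~$\Rightarrow$~(ii) of Proposition~\ref{prop:embedding1}, I would produce pairwise orthogonal positive elements $e_1, \dots, e_n \in \mathcal{D}$ each equivalent to $(d-\delta)_+$ for some $\delta > 0$; only the inequality $nx \le \langle 1_{\mathcal{D}} \rangle$ is used at this point, not stable rank one. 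Choosing $\delta$ small enough, lower semicontinuity of $d_\tau$ still gives $d_\tau(e_1) > \lambda$.

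The only place in the proof of (i)~$\Rightarrow$~(ii) of Proposition~\ref{prop:embedding1} that invoked stable rank one was the use of Lemma~\ref{lm:cancellation2} to deduce $1_\mathcal{D} - f_\eta(e_1+\cdots+e_n) \precsim (e_1-\eta)_+$; this is the step at which I would instead apply Lemma~\ref{ssa-n-comparison}. Since $1_\mathcal{D} - f_\eta(e_1+\cdots+e_n) \perp (e_1+\cdots+e_n-\eta)_+$ and the $e_i$ are pairwise orthogonal, lower semicontinuity of $d_\tau$ gives
\[
\lim_{\eta \to 0+} d_\tau(1_\mathcal{D} - f_\eta(e_1+\cdots+e_n)) \le 1 - n\, d_\tau(e_1), \qquad \lim_{\eta \to 0+} d_\tau((e_1-\eta)_+) = d_\tau(e_1).
\]
The bound $d_\tau(e_1) > \lambda$ rewrites as $(r+1)\bigl(1 - n\, d_\tau(e_1)\bigr) < d_\tau(e_1)$, so for all sufficiently small $\eta > 0$,
\[
(r+1)\, d_\tau\bigl(1_\mathcal{D} - f_\eta(e_1+\cdots+e_n)\bigr) < d_\tau((e_1-\eta)_+),
\]
and Lemma~\ref{ssa-n-comparison} (applied with $r$ in place of $n$ there) yields the desired subequivalence. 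Setting $b_i := f_\eta(e_i)$ and choosing $\ep > 0$ with $(e_1-\eta)_+ \precsim (b_1-\ep)_+$---feasible since $\langle e_1 \rangle = \langle b_1 \rangle$ in $W(\mathcal{D})$, as in the original proof---then produces the data required by condition~\ref{prop:embedding1}(ii).

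The main obstacle is that finite decomposition rank does not provide the full almost unperforation used in Proposition~\ref{comparison-Z-stable}; only the coarser comparison of Lemma~\ref{ssa-n-comparison}, which loses a factor $r+1$, is available. The numerical content of the plan is that this loss is harmless: Lemma~\ref{splitting-into-pieces} supplies $d_\tau(x)$ arbitrarily close to $1/n$, which always beats the required threshold $(r+1)/(n(r+1)+1) < 1/n$. Everything else in the argument is a direct transcription of steps already present in the paper.
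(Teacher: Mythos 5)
Your argument is correct, but it takes a genuinely different route from the paper's. The paper first upgrades the weak comparison of Lemma~\ref{ssa-n-comparison} to full strict comparison ($d_\tau(x)<d_\tau(y)\Rightarrow x\le y$), i.e.\ to almost unperforation of $W(\mathcal{D})$, by a bootstrapping argument that writes $x$ as approximately $kx_0+z$ with $k$ large (Lemmas~\ref{splitting-into-pieces} and \ref{lm:almostalgorder}(ii)) and then adds the pieces $x_0$ one at a time below $y$ via Lemma~\ref{addition-domination}; it then simply invokes Proposition~\ref{comparison-Z-stable}. You instead bypass almost unperforation entirely and re-run the proof of Proposition~\ref{comparison-Z-stable} directly, replacing its single use of strict comparison by the $(r+1)$-lossy Lemma~\ref{ssa-n-comparison}; the point that makes this work is your sharpened divisibility threshold $d_\tau(x)>(r+1)/(n(r+1)+1)$, which Lemma~\ref{splitting-into-pieces} supplies because that threshold is strictly below $1/n$, and which turns $(r+1)\bigl(1-nd_\tau(e_1)\bigr)<d_\tau(e_1)$ into exactly the hypothesis of Lemma~\ref{ssa-n-comparison}. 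Your numerics check out, the steps of Proposition~\ref{prop:embedding1} you reuse genuinely do not need stable rank one, and your preliminary remark that finite decomposition rank forces quasidiagonality and hence finiteness (so that $\tau$ exists and is unique) is a point the paper leaves implicit. What the two approaches buy: yours is shorter and dispenses with Lemmas~\ref{lm:almostalgorder} and \ref{addition-domination}; the paper's establishes along the way the stronger and independently interesting statement that a strongly self-absorbing $C^*$-algebra of finite decomposition rank has strict comparison (which of course also follows a posteriori from $\mathcal{Z}$-stability, so nothing is ultimately lost either way).
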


\begin{proof} By Remark~\ref{aup} and Proposition~\ref{comparison-Z-stable} 
it suffices to show that for all $x,y
  \in W(\mathcal{D})$ with $d_\tau(x) < d_\tau(y)$ one has $x \le
  y$, where $\tau$ is the unique trace on $\mathcal{D}$. 
Put $\delta = (d_\tau(y)-d_\tau(x))/(n+1)$, where $n$ is the
decomposition rank of $\D$.
Choose an integer $k \ge n$ 
such  that $(n+1)d_\tau(x)/k < \delta$. By
  Lemma~\ref{splitting-into-pieces} there is $x_0 \in 
  W(\mathcal{D})$ such that $kx_0 \le x$ and $kd_\tau(x_0) \ge
  d_\tau(x) - \delta/2$, and by Lemma~\ref{lm:almostalgorder}~(ii) 
there is $z \in
  W(\mathcal{D})$ such that $kx_0+z \ge x$ and $d_\tau(z) <
  d_\tau(x) - kd_\tau(x_0) + \delta/2 \le \delta$. For each
  $j=0,1,\dots,n-1$ we have
$$(n+1)d_\tau(x_0)  \; \le \; (n+1)d_\tau(x)/k \; < \; \delta \; = \; 
d_\tau(y)-d_\tau(x) \; \le \; d_\tau(y)-d_\tau(jx_0).$$
Lemma~\ref{addition-domination} therefore yields  $jx_0 \le y \Rightarrow
(j+1)x_0 \le y$ for $j=0,1,\dots,n-1$. Hence $nx_0 \le y$. Next,
$$(n+1)d_\tau(z) < (n+1)\delta \le d_\tau(y)-d_\tau(x) \le
d_\tau(y)-d_\tau(nx_0),$$
so, again by  Lemma~\ref{addition-domination}, we get $x \le nx_0+z
\le y$ as desired.
\end{proof}

\begin{theorem} 
\label{thm:Z-char-dr} 
Let $\mathcal{D}$ be a unital \Cs. Then $\mathcal{D} \cong \cZ$ if and
only if
\begin{enumerate}
\item $\mathcal{D}$ is strongly self-absorbing,
\item the decomposition rank of $\mathcal{D}$ is finite,
\item $\mathcal{D}$ is $KK$-equivalent to $\mathbb{C}$.
\end{enumerate}
\end{theorem}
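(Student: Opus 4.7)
The plan is first to verify that $\cZ$ itself satisfies the three conditions and then to handle the converse. That $\cZ$ is strongly self-absorbing is due to Jiang--Su, and its $KK$-equivalence to $\mathbb{C}$ is immediate from its $K$-theory (as computed in \cite{JiaSu:Z}) together with the Universal Coefficient Theorem. For the decomposition rank of $\cZ$, I would invoke the fact (established by Kirchberg and the second named author) that $\cZ$ has decomposition rank at most $2$, since it is an inductive limit of prime dimension-drop algebras, each of which has decomposition rank one.

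For the hard direction, assume $\mathcal{D}$ satisfies (i)--(iii). The first major step is to apply Proposition~\ref{dr-n-strict-comparison} to conclude that $\mathcal{D} \otimes \cZ \cong \mathcal{D}$; equivalently, by Proposition~\ref{prop:inclusion}(i), $\cZ$ embeds unitally into $\mathcal{D}$. The second step is to establish that $\mathcal{D}$ has exactly the Elliott invariant of $\cZ$: finite decomposition rank forces $\mathcal{D}$ to be nuclear and stably finite (by results from \cite{KirWinter:dr}), and since strongly self-absorbing plus stably finite implies the existence of a unique tracial state, $\mathcal{D}$ has a unique trace $\tau$. Combined with $KK$-equivalence to $\mathbb{C}$, this means the Elliott invariant of $\mathcal{D}$ is $(\Z, \Z^+, [1], \{\tau\}, 0)$, which coincides with that of $\cZ$.

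The final step, which is the one where all the technical weight is deferred to an outside reference, is to invoke the classification theorem of the second named author from \cite{Winter:localizingEC}. That theorem classifies simple, unital, separable, nuclear, $\cZ$-stable \Cs s of finite decomposition rank that satisfy the UCT by their Elliott invariants. Since $\mathcal{D}$ is simple (being strongly self-absorbing), unital, separable, nuclear, $\cZ$-stable (by the first step), has finite decomposition rank (by hypothesis), and satisfies the UCT (being $KK$-equivalent to $\mathbb{C}$, which trivially satisfies the UCT), the classification theorem applies and yields $\mathcal{D} \cong \cZ$.

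The main obstacle, beyond correctly packaging the hypotheses of Winter's classification theorem, is verifying that $\cZ$-stability combined with strong self-absorption and finite decomposition rank indeed places $\mathcal{D}$ in the scope of that classification; one also has to be careful that $KK$-equivalence to $\mathbb{C}$ is the correct way to extract both the $K$-theory and the UCT hypothesis. An alternative, purely internal route would be to try to construct a unital embedding $\mathcal{D} \hookrightarrow \cZ$ directly and then appeal to Proposition~\ref{prop:inclusion}(ii); this would presumably require approximating $\mathcal{D}$ by suitable finite-dimensional model subalgebras using the decomposition rank structure and matching traces, but this amounts to essentially redoing the proof of Winter's classification result, so invoking \cite{Winter:localizingEC} as a black box is the economical choice.
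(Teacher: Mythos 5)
Your proposal is correct and follows essentially the same route as the paper: $\mathcal{Z}$-absorption of $\mathcal{D}$ via Proposition~\ref{dr-n-strict-comparison}, the Elliott invariant read off from $KK$-equivalence to $\mathbb{C}$ together with the unique trace, and then the classification theorem of \cite{Winter:localizingEC} as a black box. The only (immaterial) difference is that the paper applies the classification to $\mathcal{D}\otimes\cZ$ (which manifestly is $\cZ$-stable with locally finite decomposition rank) and separately notes $\mathcal{D}\otimes\cZ\cong\mathcal{D}$, whereas you first transport $\cZ$-stability to $\mathcal{D}$ and classify $\mathcal{D}$ directly --- the same object once the absorption is known.
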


\begin{proof} 
It is well-know that $\cZ$ satisfies properties (i)--(iii)
above. Assume now that (i)--(iii) holds. 
To show that $\mathcal{D} \otimes \mathcal{Z} \cong \mathcal{Z}$, note that 
$\mathcal{D} \otimes \mathcal{Z}$ and $\mathcal{Z}$ both have (locally) finite 
decomposition rank, and are $\mathcal{Z}$-stable. Since 
$\mathcal{D} \otimes \mathcal{Z}$ is  $KK$-equivalent to $\mathbb{C}$, 
$K_{0}(\mathcal{D} \otimes \mathcal{Z}) = \mathbb{Z}$ 
and $K_{1}(\mathcal{D} \otimes \mathcal{Z})=0$. Since 
$\mathcal{D} \otimes \mathcal{Z}$ is  stably finite and $\mathcal{Z}$-stable, 
the order structure of its $K$-theory is 
determined by the unique tracial state (see \cite{GongJiangSu:Z}), whence 
$\mathcal{D} \otimes \mathcal{Z} \cong \mathcal{Z}$ by 
\cite[Corollary~8.1]{Winter:localizingEC}.

That $\mathcal{D} \otimes \mathcal{Z} \cong \mathcal{D}$ simply follows  
from Proposition~\ref{dr-n-strict-comparison}. 
\end{proof}

\begin{remarks}
Formally, Theorems \ref{thm:Z-char} and \ref{thm:Z-char-dr} are very similar, 
and it is interesting to compare them. Conditions (ii) of both theorems refer to notions 
of noncommutative covering dimension; however, one should keep in mind that 
decomposition rank has a much more topological flavor than stable rank one. 

Furthermore, using \cite{HirRorWin:C_0(X)} and the fact that (generalized) 
prime dimension drop $C^{*}$-algebras are $KK$-equivalent to $\mathbb{C}$ 
(cf.\ \cite{JiaSu:Z}), \ref{thm:Z-char-dr}(iii) follows from   \ref{thm:Z-char}(iv). 
Because of these conditions, neither \ref{thm:Z-char} nor \ref{thm:Z-char-dr} 
are completely intrinsic characterizations

Condition \ref{thm:Z-char}(iii)  may be interpreted as a $K$-theory type  condition 
in the broadest sense; it remains an interesting possibility that it is redundant in 
\ref{thm:Z-char}. Similarly, it might be the case that \ref{thm:Z-char-dr} still 
holds when only asking for locally finite (as opposed to finite) decomposition rank 
in \ref{thm:Z-char-dr}(ii). Conditions \ref{thm:Z-char}(iii) and \ref{thm:Z-char-dr}(ii) 
are (implicitly) both used to ensure   notions of comparison of positive elements. 
So,  the question is whether (stably finite) strongly self-absorbing $C^{*}$-algebras 
automatically have some sort of comparison property. (In the infinite case, this has an 
affirmative answer, since an infinite strongly self-absorbing $C^{*}$-algebra is always 
purely infinite by a result of Kirchberg.)
\end{remarks}

\bibliographystyle{amsplain}
\providecommand{\bysame}{\leavevmode\hbox to3em{\hrulefill}\thinspace}
\providecommand{\MR}{\relax\ifhmode\unskip\space\fi MR }
\providecommand{\MRhref}[2]{%
  \href{http://www.ams.org/mathscinet-getitem?mr=#1}{#2}
}
\providecommand{\href}[2]{#2}

\vspace{.5cm}

\noindent{\sc Department of Mathematical Sciences, University of
  Copenhagen, Universitets\-parken 5, DK-2100 Copenhagen, Denmark}

\vspace{.3cm}

\noindent{\sl E-mail address:} {\tt rordam@math.ku.dk}\\
\noindent{\sl Internet home page:}
{\tt www.math.ku.dk/$\,\widetilde{\;}\;$rordam} \\

\vspace{.5cm}
\noindent{\sc School of Mathematical Sciences, University of Nottingham, 
University Park, Nottingham NG7 2RD, United Kingdom}

\vspace{.3cm}
\noindent{\sl E-mail address:} {\tt wilhelm.winter@nottingham.ac.uk}\\
\end{document}